\newtheorem{theorem}{Theorem}
\newtheorem{proposition}{Proposition}
\newtheorem*{proposition*}{Proposition}
\newtheorem{lemma}{Lemma}
\newtheorem{corollary}{Corollary}
\newtheorem{definition}{Definition}
\newtheorem{remark}{Remark}
\newtheorem*{prop:smoothed GD dimension}{\propref{prop:smoothed GD dimension}}
\newcommand{\reals}{\mathbb{R}}
\newcommand{\E}{\mathbb{E}}
\newcommand{\sign}{\mathrm{sign}}
\newcommand{\NN}{\mathbb{N}}
\newcommand{\OO}{\mathbb{O}}
\newcommand{\SSS}{\mathbb{S}}
\newcommand{\be}{\mathbf{e}}
\newcommand{\bx}{\mathbf{x}}
\newcommand{\bw}{\mathbf{w}}
\newcommand{\bg}{\mathbf{g}}
\newcommand{\bu}{\mathbf{u}}
\newcommand{\bv}{\mathbf{v}}
\newcommand{\bz}{\mathbf{z}}
\newcommand{\br}{\mathbf{r}}
\newcommand{\by}{\mathbf{y}}
\newcommand{\bq}{\mathbf{q}}
\newcommand{\Ocal}{\mathcal{O}}
\newcommand{\Acal}{\mathcal{A}}
\newcommand{\Xcal}{\mathcal{X}}
\newcommand{\Fcal}{\mathcal{F}}
\newcommand{\Hcal}{\mathcal{H}}
\newcommand{\Scal}{\mathcal{S}}
\newcommand{\norm}[1]{\|#1\|}
\newcommand{\inner}[1]{\langle#1\rangle}
\newcommand{\secref}[1]{Sec.~\ref{#1}}
\newcommand{\figref}[1]{Fig.~\ref{#1}}
\renewcommand{\eqref}[1]{Eq.~(\ref{#1})}
\newcommand{\lemref}[1]{Lemma~\ref{#1}}
\newcommand{\thmref}[1]{Thm.~\ref{#1}}
\newcommand{\propref}[1]{Proposition~\ref{#1}}
\newcommand{\appref}[1]{Appendix~\ref{#1}}
\newcommand{\remarkref}[1]{Remark~\ref{#1}}
\title{Oracle Complexity in Nonsmooth Nonconvex Optimization}
\author{
 Guy Kornowski  \qquad Ohad Shamir\\
  Weizmann Institute of Science \\
  \texttt{\{guy.kornowski,ohad.shamir\}@weizmann.ac.il}  
}
\date{}
\begin{document}

\maketitle

\begin{abstract}
	It is well-known that given a smooth, bounded-from-below, and possibly nonconvex function, standard gradient-based methods can find $\epsilon$-stationary points (with gradient norm less than $\epsilon$) in $\mathcal{O}(1/\epsilon^2)$ iterations. However, many important nonconvex optimization problems, such as those associated with training modern neural networks, are inherently not smooth, making these results inapplicable. In this paper, we study nonsmooth nonconvex optimization from an oracle complexity viewpoint, where the algorithm is assumed to be given access only to local information about the function at various points. We provide two main results: First, we consider the problem of getting \emph{near} $\epsilon$-stationary points. This is perhaps the most natural relaxation of \emph{finding} $\epsilon$-stationary points, which is impossible in the nonsmooth nonconvex case. We prove that this relaxed goal cannot be achieved efficiently, for any distance and $\epsilon$ smaller than some constants. Our second result deals with the possibility of tackling nonsmooth nonconvex optimization by reduction to smooth optimization: Namely, applying smooth optimization methods on a smooth approximation of the objective function. For this approach, we prove under a mild assumption an inherent trade-off between oracle complexity and smoothness: On the one hand, smoothing a nonsmooth nonconvex function can be done very efficiently (e.g., by randomized smoothing), but with dimension-dependent factors in the smoothness parameter, which can strongly affect iteration complexity when plugging into standard smooth optimization methods. On the other hand, these dimension factors can be  eliminated with suitable smoothing methods, but only by making the oracle complexity of the smoothing process exponentially large.
\end{abstract}

\section{Introduction}

We consider optimization problems associated with functions $f:\reals^d\to\reals$, where $f(\cdot)$ is Lipschitz continuous and bounded from below, but otherwise satisfies no special structure, such as convexity. Clearly, in high dimensions, it is generally impossible to efficiently find a global minimum of a nonconvex function. However, if we relax our goal to finding (approximate) stationary points, then the nonconvexity is no longer an issue. In particular, it is known that if $f(\cdot)$ is \emph{smooth} -- namely, differentiable and with a Lipschitz continuous gradient -- then for any $\epsilon>0$, simple gradient-based algorithms can find $\bx$ such that $\norm{\nabla f(\bx)}\leq \epsilon$, using only $\Ocal(1/\epsilon^2)$ gradient computations, independent of the dimension (see for example \citealt{nesterov2012make,jin2017escape,carmon2019lower}). 

Unfortunately, many optimization problems of interest are inherently \emph{not} smooth. For example, when training modern neural networks, involving max operations and rectified linear units, the associated optimization problem is virtually always nonconvex as well as nonsmooth. Thus, the positive results above, which crucially rely on smoothness, are inapplicable. Although there are positive results even for nonconvex nonsmooth functions, they tend to be either purely asymptotic in nature (e.g., \citealt{benaim2005stochastic,kiwiel2007convergence,zhang2009smoothing,davis2018stochastic,majewski2018analysis}), depend on the internal structure and representation of the objective function, or require additional assumptions which many problems of interest do not satisfy, such as weak convexity or some separation between nonconvex and nonsmooth components\footnote{A trivial example arising in deep learning, which does not satisfy most such structural assumptions, is the negative of the ReLU function, $x\mapsto -\max\{0,x\}$.} (e.g., \citealt{cartis2011evaluation,chen2012smoothing,duchi2018stochastic,bolte2018first,davis2019stochastic,drusvyatskiy2019efficiency,beck2020convergence}). This leads to the interesting question of developing black-box algorithms with non-asymptotic guarantees for nonsmooth nonconvex functions, without assuming any special structure.

In this paper, we study this question via the well-known framework of oracle complexity \citep{nemirovskiyudin1983}: Given a class of functions $\Fcal$, we associate with each $f\in \Fcal$ an \emph{oracle}, which for any $\bx$ in the domain of $f(\cdot)$, returns local information about $f(\cdot)$ at $\bx$ (such as its value and gradient).\footnote{For non-differentiable functions, we use a standard generalization of gradients following \cite{clarke1990optimization}, see \secref{sec:preliminaries} for details.} We consider iterative algorithms which can be described via an interaction with such an oracle: At every iteration $t$, the algorithm chooses an iterate $\bx_t$, and receives from the oracle local information about $f(\cdot)$ at $\bx_t$, which is then used to choose the next iterate $\bx_{t+1}$. This framework captures essentially all iterative algorithms for black-box optimization. In this framework, we fix some iteration budget $T$, and ask what properties can be guaranteed for the iterates $\bx_1,\ldots,\bx_T$, as a function of $T$ and uniformly over all functions in $\Fcal$ (for example, how close to optimal they are, whether they contain an approximately-stationary point, etc.). Unfortunately, as recently pointed out in \citet{zhang2020complexity}, neither small optimization error (in terms of the function value) nor small gradients can be obtained for nonsmooth nonconvex functions with such local-information algorithms: Indeed, approximately-stationary points can be easily ``hidden'' inside some arbitrarily small neighborhood, which cannot be found in a bounded number of iterations. 

Instead, we consider here two alternative approaches to tackle nonsmooth nonconvex optimization, and provide new oracle complexity results for each. We note that \citet{zhang2020complexity} recently proposed another promising approach, by defining a certain relaxation of approximate stationarity (so-called $(\delta,\epsilon)$-stationarity), and remarkably, prove that points satisfying this relaxed goal can be found via simple iterative algorithms with provable guarantees. However, there exist cases where their definition does not resemble a stationary point in any intuitive case, and thus it remains to be seen whether it is the most appropriate one. We further discuss the pros and cons of their approach in Appendix \ref{app:de-stationarity}.

In our first contribution, we consider relaxing the goal of finding approximately-stationary points, to that of finding \emph{near}-approximately-stationary points: Namely, getting $\delta$-close to a point $\bx$ with a (generalized) gradient of norm at most $\epsilon$. This is arguably the most natural way to relax the goal of finding $\epsilon$-stationary points, while hopefully still getting meaningful algorithmic guarantees. Moreover, approaching stationary points is feasible in an asymptotic sense (see for instance \citealt{drusvyatskiy2019efficiency}). Unfortunately, we formally prove that this relaxation already sets the bar too high: For any possibly randomized algorithm interacting with a local oracle, it is impossible to find near-approximately-stationary point with efficient worst-case guarantees, for small enough constant $\delta,\epsilon$.

In our second contribution, we consider tackling nonsmooth nonconvex optimization by reduction to smooth optimization: Given the target function $f(\cdot)$, we first find a smooth function $\tilde{f}(\cdot)$ (with Lipschitz gradients) which uniformly approximates it up to some arbitrarily small parameter $\epsilon$, and then apply a smooth optimization method on $\tilde{f}(\cdot)$. Such reductions are common in convex optimization (e.g., \citealt{nesterov2005smooth,beck2012smoothing,allen2016optimal}), and intuitively, should usually lead to points with meaningful properties with respect to the original function $f(\cdot)$, at least when $\epsilon$ is small enough. For example, it is known that stationary points of $f(\cdot)$ are the limit of approximately-stationary points of appropriate smoothings of $f(\cdot)$, as $\epsilon \rightarrow 0$ \citep[Theorem 9.67]{rockafellar2009variational}.

This naturally leads to the question of how we can find a smooth approximation of a Lipschitz function $f(\cdot)$. Inspecting existing approaches for smoothing nonconvex functions, we notice an interesting trade-off between computational efficiency and the smoothness of the approximating function: On the one hand, there exist optimization-based methods from the functional analysis literature (in particular, Lasry-Lions regularization \citep{lasry1986remark}) which yield essentially optimal gradient Lipschitz parameters, but are not computationally efficient. On the other hand, there exist simple, computationally tractable methods (such as randomized smoothing \citealp{duchi2012randomized}), which unfortunately lead to much worse gradient Lipschitz parameters, with strong scaling in the input dimension. This in turn leads to larger iteration complexity, when plugging into standard smooth optimization methods. Is this kind of trade-off between computational efficiency and smoothness necessary?

Considering this question from an oracle complexity viewpoint, we prove that this trade-off is indeed necessary under mild assumptions: If we want a smoothing method whose oracle complexity is polynomial in the problem parameters, we must accept that the gradient Lipschitz parameter may be no better than that obtained with randomized smoothing (up to logarithmic factors). Thus, in a sense, randomized smoothing is an optimal nonconvex smoothing method among computationally efficient ones. 

It is important to stress that although we formalize our results for general Lipschitz functions, all of our results readily apply to more restricted classes of Lipschitz functions which are often studied in the optimization literature such as Hadamard semi-differentiable, Whitney-stratifiable and semi-algebraic functions. We further discuss this in \remarkref{remark:more assumptions}.

Overall, we hope that our work motivates additional research into black-box algorithms for nonconvex, nonsmooth optimization problems, with rigorous finite-time guarantees.

Our paper is structured as follows. In \secref{sec:preliminaries}, we formally introduce the notations and terminology that we use. In \secref{sec:near-approximate}, we present our results for getting near approximately stationary points. In \secref{sec:smoothing}, we present our results for smoothing nonsmooth nonconvex functions. In \secref{sec:proofs}, we provide full proofs. We conclude in \secref{sec:discussion} with a discussion of open questions. Our paper also contains a few appendices, which beside technical proofs and lemmas, include further discussion of the notion of $(\delta,\epsilon)$-stationarity from \citet{zhang2020complexity} (Appendix \ref{app:de-stationarity}), and a proof that the dimension-dependency arising from randomized smoothing provably affects the iteration complexity of vanilla gradient descent (Appendix \ref{app: smoothed GD dimension}).

\section{Preliminaries}\label{sec:preliminaries}

\textbf{Notation.} We let bold-face letters (e.g., $\bx$) denote vectors. $\bf{0}$ is the zero vector in $\reals^d$ (where $d$ is clear from context), and $\be_1,\be_2,\ldots$ are the standard basis vectors. Given a vector $\bx$, $x_i$ denotes its $i$-th coordinate, and $\bar{\bx}$ denotes the normalized vector $\bx/\norm{\bx}$ (assuming $\bx\neq \mathbf{0}$). $\langle\cdot,\cdot\rangle,~\|\cdot\|$ denote the standard Euclidean dot product and its induced norm over $\reals^d$, respectively. For any real number $x$, we denote $[x]_{+}:=\max\{x,0\}$. Given two functions $f(\cdot),g(\cdot)$ on the same domain $\Xcal$, we define $\|f-g\|_{\infty}=\sup_{\bx\in\Xcal}|f(\bx)-g(\bx)|$. We denote by $\SSS^{d-1}:=\left\{\bx\in\reals^d~\middle|~\|\bx\|=1\right\}$ the unit sphere.
For any natural $N$, we abbreviate $[N]:=\{1,\dots,N\}$. We occasionally use standard big-O asymptotic notation: For functions $f,g:\Xcal\to[0,\infty)$ we write $f=\Ocal(g)$ if there exists $c>0$ such that $f(x)\leq c\cdot g(x)$; $f=\Omega(g)$ if $g=\Ocal(f)$; $f=\Theta(g)$ if $f=\Ocal(g)$ and $g=\Ocal(f)$. We occasionally hide logarithmic factors by writing $f=\tilde{\Ocal}(g)$ if $f=\Ocal(g\log(g+1))$, or $f=\tilde{\Omega}(g)$ if $g=\tilde{\Ocal}(f)$, and also denote by $\text{poly}(\cdot)$ polynomial factors.

\textbf{Gradients and generalized gradients.} If a function $f:\reals^d\to\reals$ is differentiable at $\bx$, we denote its gradient by $\nabla{f}(\bx)$. For possibly non-differentiable functions, we let $\partial f(\bx)$ denote the set of \emph{generalized} gradients (following \citealt{clarke1990optimization}), arguably the most standard extension of gradients to certain classes of nonsmooth nonconvex functions. For Lipschitz functions (which are almost everywhere differentiable by Rademacher's theorem), one simple way to define it is
\[
\partial f(\bx)~:=~ \text{conv}\{\bu:\bu=\lim_{k\rightarrow \infty} \nabla f(\bx_k), \bx_k\rightarrow \bx\}
\]
(namely, the convex hull of all limit points of $\nabla f(\bx_k)$, over all sequences $\bx_1,\bx_2,\ldots$ of differentiable points of $f(\cdot)$ which converge to $\bx$). With this definition, a \emph{stationary point} with respect to $f(\cdot)$ is a point $\bx$ satisfying $\mathbf{0}\in \partial f(\bx)$. Also, given some $\epsilon \geq 0$, we say that $\bx$ is an \emph{$\epsilon$-stationary point} with respect to $f(\cdot)$, if there is some $\bu\in \partial f(\bx)$ such that $\norm{\bu}\leq \epsilon$. 
Furthermore, when some small $\delta,\epsilon$ will be clear from context, we will say that $\bx$ is a near-approximately-stationary point if $\norm{\bx-\bx'}<\delta$ for some $\epsilon$-stationary point $\bx'$.

\textbf{Local oracles.} We consider oracles that given a function $f(\cdot)$ and a point $\bx$, return some quantity $\OO_{f}(\bx)$ which conveys local information about the function near that point. Formally (following \citealt{braun2017lower}), we call an oracle \emph{local} if for any $\bx$ and any two functions $f,g$ that are equal over some neighborhood of $\bx$, it holds that $\OO_{f}(\bx)=\OO_{g}(\bx)$.
An important example is the first order oracle $\OO_{f}(\bx)=(f(\bx),\partial f(\bx))$, but one can consider more sophisticated oracles such as those which return all high order derivative information, wherever they exist.
We choose to work in this generality since our results hold for any local oracle whatsoever, although we note that in the nonconvex-nonsmooth setting, it remains to be seen whether in the general Lipschitz setting discussed in this paper there is any use for any local information which is not first order.

\section{Hardness of Getting Near Approximately-Stationary Points}\label{sec:near-approximate}

In this section, we present our first main result, which establishes the hardness of getting near approximately-stationary points. 

To avoid trivialities, and following \citet{zhang2020complexity}, we will focus on functions $f(\cdot)$ which are Lipschitz and bounded from below. In particular, we will assume that $f(\mathbf{0})-\inf_{\bx}f(\bx)$ is upper bounded by a constant. We note that this is without loss of generality, as $\mathbf{0}$ can be replaced by any other fixed reference point. We consider possibly randomized algorithms which interact with some local oracle. Such an algorithm first produces $\bx_1$ possibly at random, receives $\OO_{f}(\bx_1)$ for some local oracle $\OO_{f}$, and for every $t>1$ produces $\bx_{t}$ possibly at random based on previously observed responses. Our main result in this section is the following:

\begin{theorem}\label{thm:main}
There exist absolute constants $c,C>0$ such that for any algorithm $\Acal$ interacting with a local oracle, and any $T\in\NN,~d\geq2$, there is a function $f(\cdot)$ on $\reals^d$ such that
\begin{itemize}
    \item $f(\cdot)$ is $C$-Lipschitz, $f(\mathbf{0})-\inf_{\bx}f(\bx)\leq C$,
    $\inf\left\{\norm{\bx}~|~\partial f(\bx)=\{\mathbf{0}\}\right\}\leq C$
    .
    \item With probability at least $1-CT\exp(-cd)$ over the algorithm's randomness, the iterates $\bx_1,\ldots,\bx_T$ produced by the algorithm satisfy
    \[
    \inf_{\bx\in\Sigma}\min_{t\in[T]}\norm{\bx_t-\bx}\geq c
    ~,
    \]
    where $\Sigma$ is the set of $c$-stationary points of $f(\cdot)$.
\end{itemize}
\end{theorem}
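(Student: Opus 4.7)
The plan is to establish this hardness result via Yao's minimax principle: I would construct a distribution over hard functions indexed by a hidden direction $\bv \in \Scal^{d-1}$, and show that for $\bv$ drawn uniformly on the sphere, any deterministic algorithm (after fixing its random tape) fails with probability at least $1 - CT/e^{cd}$. A natural candidate family is $f_\bv(\bx) = f_{\text{base}}(\bx) + \rho(\|\bx - R\bv\|)$, where $\rho$ is a smooth radially-symmetric perturbation supported in $[0, r_0]$ and $f_{\text{base}}$ is a fixed $\bv$-independent baseline. The structural properties I would aim for are: (a) each $f_\bv$ is $C$-Lipschitz, bounded below with $f_\bv(\mathbf{0}) - \inf f_\bv \leq C$, and has a strict stationary point $\bx^*(\bv)$ (with $\partial f_\bv(\bx^*(\bv)) = \{\mathbf{0}\}$) inside the tube $B(R\bv, r_0)$; and (b) the full $c$-stationary set $\Scal$ lies entirely in this tube. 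Crucially, outside the tube $f_\bv$ agrees with $f_{\text{base}}$, so any two $f_\bv, f_{\bv'}$ would be locally indistinguishable at any point lying outside both tubes, in the sense required by the definition of a local oracle.

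From here, the argument would follow standard reasoning. By locality of the oracle, if none of the algorithm's $T$ queries lies inside $B(R\bv, r_0)$, then all oracle responses are identical to those obtained on $f_{\text{base}}$, and so (for a deterministic algorithm) the produced iterates $\bx_1, \dots, \bx_T$ are entirely independent of $\bv$. I would then invoke concentration of measure on $\Scal^{d-1}$: for any fixed $\bx \in \reals^d$, the event $\bx \in B(R\bv, r_0)$ corresponds to $\bv$ lying in a spherical cap whose normalized surface measure is at most $C_1 e^{-c_1 d}$. A union bound over the $T$ queries gives that the probability some query hits the tube is at most $T \cdot C_1 e^{-c_1 d}$. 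Conditioned on the complementary event, the iterates are $\bv$-independent, and a second application of spherical concentration shows that, with comparable exponential probability, no iterate lies within distance $c$ of $B(R\bv, r_0) \supseteq \Scal$. Combining these via Yao's principle and adjusting constants would yield the stated bound.

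The main obstacle will be the construction itself: engineering $f_\bv$ so that its $c$-stationary set is confined to the hidden tube while keeping $f_\bv$ Lipschitz and bounded below on all of $\reals^d$. This is in tension with Ekeland's variational principle, which forces near-stationary points to exist somewhere. In particular, I would need the origin --- which is always accessible to the algorithm --- to itself not be $c$-stationary, which rules out naive constructions such as $f_\bv(\bx) = \|\bx\| + \rho(\|\bx - R\bv\|)$, since $\mathbf{0}\in\partial\|\cdot\|(\mathbf{0})$. The baseline $f_{\text{base}}$ must therefore be designed so that every subgradient has norm strictly above $c$ everywhere outside the tube, for instance by combining a directional ``tilt'' with a bounded-below global structure (such as a smoothed distance to a suitable target set) that forces all Ekeland-type near-stationary points to fall inside the hidden tube. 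Once such an $f_{\text{base}}$ is in place, the remaining indistinguishability and concentration arguments are largely routine, making the construction the heart of the proof.
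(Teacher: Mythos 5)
Your high-level plan -- a perturbation hidden along a uniformly random direction $\bv\in\Scal^{d-1}$, combined with concentration of measure on the sphere and Yao's principle -- captures exactly one of the two hardness ingredients in the paper's proof, and you have correctly identified the central obstruction (Ekeland's variational principle forces a bounded-below Lipschitz base function $f_{\text{base}}$ to have $c$-stationary points). However, the tube construction $f_\bv(\bx)=f_{\text{base}}(\bx)+\rho(\|\bx-R\bv\|)$ cannot be repaired in the way you suggest. Since $f_{\text{base}}$ is $\bv$-independent and $\rho$ is supported in $B(R\bv,r_0)$, the function $f_\bv$ agrees with $f_{\text{base}}$ outside that ball; hence every $c$-stationary point of $f_{\text{base}}$ that lies outside $B(R\bv,r_0)$ remains a $c$-stationary point of $f_\bv$. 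For these fixed, algorithm-known points to always be inside the tube would require them to lie in $\bigcap_{\bv}B(R\bv,r_0)$, which is empty when $r_0<R$ and essentially $B(\mathbf{0},r_0-R)$ when $r_0>R$ -- in the latter case the tube is so large that it contains a fixed neighborhood of the origin for all $\bv$, so concentration of measure no longer hides anything. An adversarial algorithm that simply queries a fixed $c$-stationary point of $f_{\text{base}}$ thus defeats your construction with high probability over $\bv$.

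The paper escapes this via a second, orthogonal source of hardness that your proposal is missing. Rather than hiding a tube at a known radius, it hides a cone-shaped ``channel'' whose \emph{apex} is itself hard to locate. Concretely, Proposition~\ref{prop: hard 1dim} builds a Nemirovski-style one-dimensional function $h$ whose unique stationary point $x^*$ has the property that after $T$ oracle queries no algorithm is likely to land within some $\rho(T,\delta)>0$ of $x^*$; lifting to $\bar{f}(\bx)=h(x_d)+\tfrac14\sqrt{\sum_{i<d}x_i^2}$ yields a bounded-below Lipschitz function whose \emph{only} stationary point is at $\bx^*=(0,\ldots,0,x^*)$. This cleanly resolves the Ekeland tension: the base does have a stationary point, but it is algorithmically unfindable. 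The channel $-[\langle\bar\bw,\bx-\bx^*+\bw\rangle-\tfrac12\|\bx-\bx^*+\bw\|]_+$ then emanates from a point within $\rho/100$ of $\bx^*$ in a random direction $\bw$, removes the stationarity at $\bx^*$, and leads to a flat region (after clamping with $\max\{-1,\cdot\}$) at constant distance. Crucially, $\rho$ can be arbitrarily small relative to $c$, so the 1-D hardness alone is not enough to push the algorithm constant-distance away -- it only ensures the algorithm never sees the branch point of the channel. The concentration argument then hides the channel direction. Both ingredients are load-bearing, and your proposal omits the first.
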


The theorem implies that it is impossible to obtain worst-case guarantees for finding near-approximately-stationary points of Lipschitz, bounded-from-below functions unless $T$ has exponential dependence on $d$. Note that if an exponential dimension dependence is allowed, then under the theorem's assumptions, one can trivially produce points close to a stationary point (or to any point, for that matter) using an exhaustive grid search. The theorem implies that no oracle-based algorithm will be significantly more efficient than this trivial strategy.
Further notice that since every deterministic algorithm can be viewed as a randomized algorithm with a trivial distribution over its decisions, the theorem above readily holds for deterministic algorithms as well. That being the case, the lower bound described in the second bullet holds deterministically.

\begin{remark}[More assumptions on $f(\cdot)$] \label{remark:more assumptions}
	The Lipschitz functions $f(\cdot)$ used to prove the theorem are based on a composition of affine functions, orthogonal projections, the Euclidean norm function $\bx\mapsto \norm{\bx}$, and the max function. Thus, the result also holds for more specific families of functions considered in the literature, which satisfy additional regularity properties, as long as they contain any Lipschitz functions composed as above. These include, for example, Hadamard semi-differentiable functions \citep{zhang2020complexity}, Whitney-stratifiable functions \citep{bolte2007clarke,davis2018stochastic} and semi-algebraic functions.
\end{remark}

\begin{figure}
\begin{center}
	\includegraphics[trim=0cm 1.2cm 0cm 1.2cm,clip=true, width=0.8\linewidth]{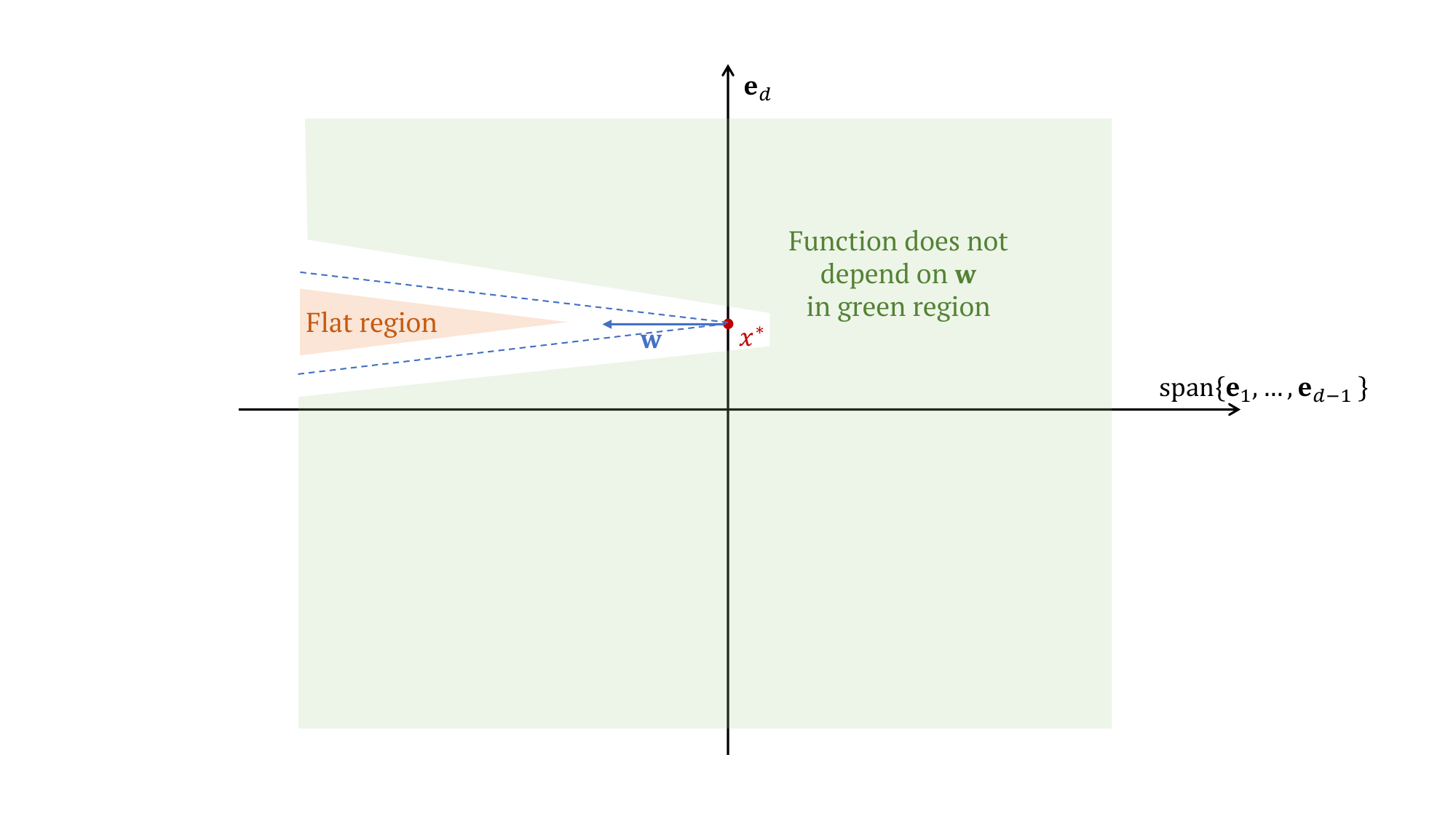}%
	\caption{Illustration of the function
	used in the proof of \thmref{thm:main}.}
	\label{fig:channel}
	\end{center}
\end{figure}

The formal proof of the theorem appears in \secref{sec:proofs}, but can be informally described as follows: First, we construct an algorithm-dependent one dimensional ``hard'' Lipschitz function $h:\reals\to\reals$ that has large derivatives everywhere except for a single point $x^*$. By ``hard'', we mean that after any finite number of steps of the algorithm, we can provide some small neighborhood of $x^*$ which the algorithm is likely not to enter.
Based on $h$, we construct a Lipschitz function on $\reals^d$, specified by a small vector $\bw\in\mathrm{span}\{\be_1,\dots,\be_{d-1}\}$, which resembles the function $\bx\mapsto \norm{(x_1,\dots,x_{d-1})}_2+h(x_d)$ in ``most'' of $\reals^d$, but with a ``channel'' leading away from
a small neighborhood of $x^*\cdot\mathbf{e}_d$ in the direction of $\bw$, and reaching a completely flat region (see \figref{fig:channel}). In high dimensions, the channel and the flat region contain a vanishingly small portion of $\reals^d$. This function has the property of having $\epsilon$-stationary points only in the flat region which is
in the direction of $x^*\cdot\mathbf{e}_d+\bw$, even though the function appears in most places like a function which does not depend on $\bw$. As a result, any oracle-based algorithm that doesn't get to close to $x^*$ in the $d$'th coordinate and doesn't know $\bw$, is unlikely to hit the vanishingly small region where the function differs from $\bx\mapsto \norm{(x_1,\dots,x_{d-1})}_2+h(x_d)$, receiving no information about $\bw$, and thus cannot determine where the $\epsilon$-stationary points lie. As a result, such an algorithm cannot return near-approximately-stationary points.

\section{Smoothing Nonsmooth Nonconvex Functions}\label{sec:smoothing}

In this section, we turn to our second main contribution, examining the possibility of reducing nonsmooth nonconvex optimization to smooth nonconvex optimization, by running a smooth optimization method on a smooth approximation of the objective function. This longstanding approach for nonsmooth nonconvex optimization has been examined by many works, initially driven by practical applications \citep{blake1987visual,wu1996effective} complemented by further theoretical analyses \citep{mobahi2015theoretical,hazan2016graduated}.
In what follows, we focus our discussion on $1$-Lipschitz functions: This is without loss of generality, since if our objective is $L$-Lipschitz, we can simply rescale it by $L$ (and a Lipschitz assumption is always necessary if we wish to obtain a Lipschitz-gradient smooth approximation). Also, we focus on smoothing functions over all of $\reals^d$ for simplicity, but our results and proofs easily extend to the case where we are only interested in smoothing over some bounded domain on which the function is Lipschitz. 

For a nonsmooth \emph{convex} function $f(\cdot)$, a well-known smoothing approach is proximal smoothing (also known as the Moreau envelope or Moreau-Yosida regularization, see \citealp{moreau1965proximite,bauschke2011convex}) defined as $P_{\delta}(f)$ where
\begin{equation}\label{eq:proximal}
P_{\delta}(f)(\bx) := \min_{\by} \left(f(\by)+\frac{1}{2\delta}\norm{\by-\bx}^2\right)~.
\end{equation}
By picking $\delta$ appropriately, $P_{\delta}(f)$ is an arbitrarily good smooth approximation of $f$; more formally, if $f$ is $1$-Lipschitz, then for any $\epsilon>0$, there exists a choice of $\delta=\Theta(\epsilon)$ such that $\|P_{\delta}(f)-f\|_{\infty}\leq \epsilon$, with the gradients of $P_{\delta}(f)$ being $\frac{1}{\epsilon}$-Lipschitz \citep[Section 4.2]{beck2012smoothing}. This is essentially optimal, as no $\epsilon$-approximation can attain a gradient Lipschitz parameter better than $\Omega(1/\epsilon)$ (see \lemref{lemma: L>1/eps} in Appendix \ref{app:technical lemmas} for a formal proof). Finally, computing gradients of $P_{\delta}(f)$ (which can then be fed into a gradient-based smooth optimization method) is feasible, given a solution to \eqref{eq:proximal}, which is a convex optimization problem and hence efficiently solvable in general. 

Unfortunately, for nonconvex functions, proximal smoothing (or other smoothing methods from convex optimization) generally fails in producing smooth approximations. However, it turns out that similar guarantees can be obtained with a slightly more complicated procedure, known as \emph{Lasry-Lions} regularization in the functional analysis literature \citep{lasry1986remark,attouch1993approximation}, which is essentially a double application of proximal smoothing combined with function flipping. One way to define it is as follows: 
\[
P_{\delta,\nu}(f)(\bx) ~:=~ -P_{\delta}(-P_{\nu}(f))(\bx)~=~ \max_{\by} \min_{\bz} \left(f(\bz)+\frac{1}{2\nu}\norm{\bz-\by}^2-\frac{1}{2\delta}\norm{\by-\bx}^2\right)~.
\]
Once more, if $f$ is $1$-Lipschitz, then choosing $\delta,\nu=\Theta(\epsilon)$ appropriately, we get an $\epsilon$-accurate approximation of $f$, with gradients which are $\frac{c}{\epsilon}$-Lipschitz for some absolute constant $c$. However, unlike the convex case, implementing this smoothing involves solving a non-convex optimization problem, which may not be computationally tractable.

Alternatively, a very simple smoothing approach, which works equally well on convex and non-convex problems, is randomized smoothing, or equivalently, convolving the objective function with a smoothness-inducing density function. Formally, given the objective function $f$ and a distribution $P$, we define $\tilde{f}(\bx):=\E_{\by \sim P}[f(\bx+\by)]$. In particular, letting $P$ be a uniform distribution on an origin-centered ball of radius $\epsilon$, the resulting function is an $\epsilon$-approximation of $f(\cdot)$, and its gradient Lipschitz parameter is $\frac{c \sqrt{d}}{\epsilon}$, where $c$ is an absolute constant and $d$ is the input dimension \citep[Lemma 8]{duchi2012randomized}. Moreover, given access to values and gradients of $f(\cdot)$, computing unbiased stochastic estimates of the values or gradients of $\tilde{f}(\cdot)$ is computationally very easy: We just sample a single $\by\sim P$, and return $f(\bx+\by)$ or $\nabla f(\bx+\by)$.\footnote{Recall that by Rademacher's theorem, $f$ is differentiable almost everywhere hence $\nabla f(\bx+\by)$ exists almost surely.} These stochastic estimates can then be plugged into stochastic methods for smooth optimization (see \citealt{duchi2012randomized,ghadimi2013stochastic}).

Comparing these two approaches, we see an interesting potential trade-off between the smoothness obtained and computational efficiency, summarized in the following table:
\begin{center}
	\begin{tabular}{|c||c|c|}
	\hline
	& $\nabla \tilde{f}$ Lipschitz param. & Computationally Efficient?\\\hline\hline
	Randomized Smoothing	&  $c\cdot \sqrt{d}/\epsilon$ & $\checkmark$ \\\hline
	Lasry-Lions Regularization &  $c/\epsilon$ & $\boldsymbol{\times}$\\
	\hline
\end{tabular}
\end{center}
In words, randomized smoothing is computationally efficient (unlike Lasry-Lions regularization), but at the cost of a much larger gradient Lipschitz parameter. Since the iteration complexity of smooth optimization methods strongly depend on this Lipschitz parameter, it follows that in high-dimensional problems, we pay a high price for computational tractability in reducing nonsmooth to smooth problems. As we demonstrate in \appref{app: smoothed GD dimension}, this is a real phenomenon and not just an artifact of iteration complexity analysis, at least for gradient descent.
Roughly speaking, we prove in \propref{prop:smoothed GD dimension}
that when gradient descent is combined with randomized smoothing, it is impossible
to guarantee getting to $\epsilon$-stationary points of the smoothed function within $\Omega(\sqrt{d}/\epsilon)$ iterations.

This discussion leads to a natural question: Is this trade-off necessary, or perhaps there exist computationally efficient methods which can improve on randomized smoothing, in terms of the gradient Lipschitz parameter? Using an oracle complexity framework, we prove that this trade-off is indeed necessary (under mild assumptions), and that randomized smoothing is essentially an optimal method under the constraint of black-box access to the objective $f(\cdot)$, and a reasonable oracle complexity. We note that \citet{duchi2012randomized} proved that the Lipschitz constant cannot be improved by simple randomized smoothing schemes, but here we consider a much larger class of possible methods.

\subsection{Smoothing Algorithms}

Before presenting our main result for this section, we need to carefully formalize what we mean by an efficient smoothing method, since ``returning'' a smooth approximating function over $\reals^d$ is not algorithmically well-defined. Recalling the motivation to our problem, we want a method that given a nonsmooth objective function $f(\cdot)$, allows us to estimate values and gradients of a smooth approximation $\tilde{f}(\cdot)$ at arbitrary points, which can then be fed into standard black-box methods for smooth optimization (hence, we need a uniform approximation property). Moreover, for black-box optimization, it is desirable that this smoothing method operates in an oracle complexity framework, where it only requires local information about $f(\cdot)$ at various points. Finally, we are interested in controlling various parameters of the smoothing procedure, such as the degree of approximation, the smoothness of the resulting function, and the complexity of the procedure. In light of these considerations, a natural way to formalize smoothing methods is the following:
\begin{definition}
	An algorithm $\Scal$ is an $(L,\epsilon,T,M,r)$-smoother if for any $1$-Lipschitz function $f$ on $\reals^d$, there exists a differentiable function $\tilde{f}$ on $\reals^d$ with the following properties:
	\begin{enumerate}[leftmargin=*]
		\item $\|f-\tilde{f}\|_{\infty}\leq \epsilon$, and $\nabla \tilde{f}$ is $L$-Lipschitz.
		\item Given any $\bx\in\reals^d$, the algorithm produces a (possibly randomized) query sequence $\{\bx_1,\dots,\bx_T\}\subset\{\by:\norm{\by-\bx}\leq r\}$, of the form
		$
		\bx_{i+1}=\Scal^{(i)}\left(\xi,\bx,\OO_{f}\left(\bx_{1}\right),\dots,\OO_{f}\left(\bx_{i}\right)\right)
		$,
		where $\Scal^{(i)}$ maps all the previous information gathered by the queries of some local oracle $\OO_{f}$ to a new query, possibly based on a draw of some random variable $\xi$.\footnote{We assume nothing about $\xi$, allowing the algorithm to utilize an arbitrary amount of randomness.} Finally, the algorithm produces a vector
		\[
		\Scal\left(f,{\bf x}\right):=\Scal^{(out)}\left(\xi,\bx,\OO_{f}\left(\bx_{1}\right),\dots,\OO_{f}\left(\bx_{T}\right)\right)~,
		\]
		where $\Scal^{(out)}$ is some mapping to $\reals^{d}$, such that
		\begin{equation} \label{eq:M almost-surely condition}
		\left\|\E_\xi\left[\Scal(f,\bx)\right]-\nabla \tilde{f}(\bx)\right\|\leq \epsilon~~~~\text{and}~~~
		\Pr_\xi\left[\|\Scal(f,\bx)\|\leq M\right]=1~.
		\end{equation}
	\end{enumerate}
\end{definition}

Some comments about this definition are in order. First, the definition is only with respect to the ability of the algorithm to approximate gradients of $\tilde{f}(\cdot)$: It is quite possible that the algorithm also has additional output (such as an approximation of the value of $\tilde{f}(\cdot)$), but this is not required for our results. Second, we do not require the algorithm to return $\nabla \tilde{f}(\bx)$: It is enough that the expectation of the vector output is close to it (up to $\epsilon$). This formulation captures both deterministic optimization-type methods (such as Lasry-Lions regularization, where in general we can only hope to solve the auxiliary optimization problem up to some finite precision) as well as stochastic methods (such as randomized smoothing, which returns $\nabla \tilde{f}(\bx)$ in expectation). Third, we assume that the queries returned by the algorithm lie at a bounded distance $r$ from the input point $\bx$. In the context of randomized smoothing, this corresponds (for example) to using a uniform distribution over a ball of radius $r$ centered on $\bx$. As we discuss later on, some assumption on the magnitude of the queries is necessary for our proof technique. However, requiring almost-sure boundedness is merely for simplicity, and it can be replaced by a high-probability bound with appropriate tail assumptions (e.g., if we are performing randomized smoothing with a Gaussian distribution), at the cost of making the analysis a bit more complicated. 

\begin{remark}\label{remark:regimes}
We are mostly interested (though do not limit our results) to the following parameter regimes:
\begin{itemize}
    \item $T=poly\left(d,L,\epsilon^{-1}\right)$, essentially meaning that a single call to $\Scal$ is computable in a reasonable amount of time.
    \item $M=poly\left(L\right)$. As we formally prove in \lemref{lemma:Lsmooth approx is L-Lip} in Appendix \ref{app:technical lemmas}, if we require $\tilde{f}$ to approximate $f$ and also have $L$-Lipschitz gradients, we must have $\|\nabla\tilde{f}\left(x\right)\|=\Ocal(L)$. In particular, whenever $M$ is sufficiently larger than $L$, \eqref{eq:M almost-surely condition} is interchangeable with the seemingly more natural condition $\Pr_{\xi}\left[\|\Scal(f,\bx)-\nabla\tilde{f}\left(\bx\right)\|\leq M\right]=1$.
    \item $r=\mathcal{O}\left(\epsilon\right)$. If we are interested in smoothing a $1$-Lipschitz,  nonconvex function up to an accuracy $\epsilon$ around a given point $\bx$, we generally expect that only its $\Ocal(\epsilon)$-neighborhood will convey useful information for the smoothing process. We note that this regime is indeed satisfied by randomized smoothing (with a uniform distribution around a radius-$\epsilon$ ball, or with high probability if we use a Gaussian distribution), as well as Lasry-Lions regularization (in the sense that the smooth approximation at $\bx$ does not change if we alter the function arbitrarily outside an $\Ocal(\epsilon)$-neighborhood of $\bx$).  
\end{itemize}
We consider all three of the above to be quite permissive. In particular, notice that randomized smoothing over a ball satisfies the much stronger $T=1,\ M=1,\ r=\epsilon$.\footnote{Indeed, $\Scal(f,\bx):=\nabla f(\bx+\bz)$ where $\bz\sim\mathrm{Unif}(\epsilon\SSS^{d-1})$
requires only a single query (namely $T=1$) at $\bx+\bz$ which is of distance at most $\epsilon$ away from $\bx$ (hence $r=\epsilon$) and satisfies $\|\Scal(f,\bx)\|\leq1$ due to Lipschitzness (hence $M=1$).}
\end{remark}

Our result will require the assumption that the smoothing algorithm $\Scal$ is \emph{translation invariant with respect to constant functions}, in the sense that it treats all constant functions
and regions of the input space in the same manner. We formalize our desired translation-invariance property as follows:
\begin{definition}\label{def:invariant}
	A smoothing algorithm $\Scal$ satisfies $\mathsf{TICF}$ (translation invariance w.r.t. constant functions) if for any two constant functions $f,g$, any $\bx\in\reals^d$ and $i\in[T]$, and any realization of $\xi$,
	\begin{equation} \label{eq:invariant query def}
	\Scal^{(i)}\left(\xi,\bx,\OO_{f}\left(\bx_{1}\right),\dots,\OO_{f}\left(\bx_{i}\right)\right)
	=\Scal^{(i)}\left(\xi,{\bf 0},\OO_{g}\left(\bx_{1}-\bx\right),\dots,\OO_{g}\left(\bx_{i}-\bx\right)\right)+\bx ~,
	\end{equation}
	and
	\begin{equation*}
	    \Scal^{(out)}\left(\xi,\bx,\OO_{f}\left(\bx_{1}\right),\dots,\OO_{f}\left(\bx_{T}\right)\right)=\Scal^{(out)}\left(\xi,{\bf 0},\OO_{g}\left(\bx_{1}-\bx\right),\dots,\OO_{g}\left(\bx_{1}-\bx\right)\right)~.
	\end{equation*}
\end{definition}
In other words, if instead of a constant function $f$ and an input point $\bx$, we pick some other constant function $g$ and the origin, the distribution of the algorithm's sequence of queries remain the same (up to a shift by $-\bx$), and the gradient estimate returned by the algorithm remains the same. 
We consider this to be a mild and natural assumption, which is clearly satisfied by standard smoothing techniques.

\subsection{Main result}

With these definitions in hand, we are finally ready to present our main result for this section, which is the following:
\begin{theorem}\label{thm:smoothingmain}
	Let $\Scal$ be an $(L,\epsilon,T,M,r)$-smoother which satisfies $\mathsf{TICF}$. Then
	\begin{equation} \label{eq:theorem inequality}
	L\sqrt{\log{\left(\left(M+1\right)\left(T+1\right)\right)}}
	\geq
	c_1\cdot\frac{\sqrt{d}}{r}\left(c_2-\epsilon\right)
	\end{equation}
	for some absolute constants $c_1,c_2>0$.
\end{theorem}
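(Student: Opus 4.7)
The plan is a reduction to an indistinguishability/forcing argument. By $\mathsf{TICF}$ I restrict to the input point $\bx=\mathbf{0}$, so the algorithm's $T$ queries all lie in $B(\mathbf{0},r)$. I would then exhibit, for each direction $\bv$ in a packing $V\subset\Scal^{d-1}$ of size $\exp(\Omega(d))$, a $1$-Lipschitz ``ramp''-type function $f_\bv$ on $\reals^d$ that (i) vanishes on an open neighborhood of $B(\mathbf{0},r)$, and (ii) whose $L$-smooth $\epsilon$-approximations $\tilde f_\bv$ are forced to have $\nabla\tilde f_\bv(\mathbf{0})$ close to a vector parallel to $\bv$. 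A concrete choice would be $f_{\bv}(\bx)=[\langle\bv,\bx\rangle-2r]_+$, suitably capped outside to preserve Lipschitz-ness globally.

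The forcing in (ii) follows from Taylor's theorem. Applied along the ray $s\bv$ (where $f_\bv$ grows linearly beyond $2r$), together with $\|\tilde f_\bv-f_\bv\|_\infty\le\epsilon$ and the $L$-smoothness of $\tilde f_\bv$, it yields $\langle\nabla\tilde f_\bv(\mathbf{0}),\bv\rangle\ge \alpha - O(\sqrt{L(r+\epsilon)})$ for a geometric constant $\alpha$; applied along rays $s\bw$ with $\bw\perp\bv$, where $f_\bv\equiv 0$, the same argument gives $|\langle\nabla\tilde f_\bv(\mathbf{0}),\bw\rangle|\le 2\sqrt{L\epsilon}$. Thus $\nabla\tilde f_\bv(\mathbf{0})$ is pinned, up to an error $\Delta=\Delta(L,\epsilon,r)$ of order $\sqrt{L(r+\epsilon)}$, to a vector parallel to $\bv$.

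By locality of $\OO$ and the inclusion of queries in $B(\mathbf{0},r)$, the functions $\{f_\bv\}_{\bv\in V}$ produce identical oracle transcripts; hence, as a random variable in the internal randomness $\xi$, the output $\mathbf{G}(\xi):=\mathcal{A}(f_\bv,\mathbf{0})$ has the same distribution for every $\bv\in V$. The approximation condition~\eqref{eq:M almost-surely condition}, combined with (ii), then forces $\|\E_{\xi}[\mathbf{G}]-\alpha\bv\|\le\epsilon+\Delta$ \emph{simultaneously} for every $\bv\in V$. By itself this already rules out $L\ll 1/(r+\epsilon)$ via an antipodal-direction choice, but the resulting bound is dimension-free and therefore weaker than the claim.

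To obtain the dimension factor $\sqrt{d}$ and to place $\sqrt{\log((M+1)(T+1))}$ on the left-hand side, I would go beyond constraining only the expectation $\E_{\xi}[\mathbf{G}]$ and exploit the whole random output $\mathbf{G}$, using $\|\mathbf{G}\|\le M$ a.s.\ and the budget of $T$ queries. Concretely, I would apply a sub-Gaussian/Hoeffding-type maximal inequality to the family of bounded scalars $\{\langle\bv,\mathbf{G}\rangle\}_{\bv\in V}$, combined with a covering of the effective range of the algorithm's output (whose cardinality should scale polynomially in $(M+1)(T+1)$ to a dimension-dependent power), and compare against the Taylor-based targets $\alpha\bv$. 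The \textbf{main obstacle}, and where I expect most of the delicacy to lie, is precisely this last concentration-and-counting step: one must quantitatively match the combinatorial complexity of the algorithm's output (controlled by $M$ and $T$) against the packing number of $\Scal^{d-1}$ to recover exactly the $\sqrt{d}/\sqrt{\log((M+1)(T+1))}$ scaling, rather than either the looser dimension-free bound extracted above or a bound with the wrong $M,T$ dependence.
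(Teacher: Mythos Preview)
Your dimension-free part is essentially correct: functions like $f_\bv(\bx)=[\langle\bv,\bx\rangle-2r]_+$ vanish on $B(\mathbf{0},r)$, so by locality the output $\mathbf{G}=\mathcal{A}(f_\bv,\mathbf{0})$ has a fixed law independent of $\bv$, while the Taylor argument forces $\langle\nabla\tilde f_\bv(\mathbf{0}),\bv\rangle\ge 1-O(\sqrt{L(r+\epsilon)})$. Antipodal $\bv$'s then give $L\gtrsim 1/(r+\epsilon)$. But your proposed route to $\sqrt{d}$ does not work. Since $\mathbf{G}$ has the \emph{same} distribution for every $\bv$, the expectation $\E[\mathbf{G}]$ is a single fixed vector; there is no sum, no empirical process, and nothing for a Hoeffding-type maximal inequality to bite on. The constraint you extract is simply ``one vector $\E[\mathbf{G}]$ lies $\epsilon$-close to each $\nabla\tilde f_\bv(\mathbf{0})$'', and the only contradiction you can squeeze from that is the antipodal one you already used. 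No covering of the output range will manufacture a $\sqrt{d}$ from a single-point constraint.

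The paper's argument is structurally different and this difference is exactly where $\sqrt{d}$ and $\log((M+1)(T+1))$ come from. Instead of constraining $\nabla\tilde f$ at one point, it builds a single function $f_\bw(\bx)=g(\langle\bw,\bx\rangle)$ with $\bw\in\Scal^{d-1}$ random, where $g$ is piecewise linear and \emph{locally constant} near a grid $\Delta\subset[0,1]$ of $K\asymp \sqrt{d}/(r\sqrt{\log((M+1)(T+1))})$ points. When the smoother is called at $\delta\bw$ for $\delta\in\Delta$, its $T$ queries lie in $B(\delta\bw,r)$; by concentration on the sphere, with probability $1-O(T/((M+1)(T+1))^{8})$ over $\bw$ their inner products with $\bw$ stay within the flat plateau of $g$, so (via $\mathsf{TICF}$, which transports the behavior on constant functions between different centers) the oracle transcript is that of a constant function and hence $\|\E[\mathcal{A}(f_\bw,\delta\bw)]\|\le\epsilon+O(1)$. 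The bound $\|\cdot\|\le M$ is used only to control the contribution of the complementary small-probability event to the expectation. This yields $|\varphi'(\delta)|\le 2\epsilon+O(1)$ at all $K$ grid points, where $\varphi(t)=\tilde f(t\bw)$. Now the $L$-Lipschitzness of $\varphi'$ bounds $\int_0^1|\varphi'|$ by $O(\epsilon)+O(L/K)$, while the approximation property forces $|\varphi(1)-\varphi(0)|\ge 1/2-2\epsilon$. Equating gives $L\gtrsim K\asymp \sqrt{d}/(r\sqrt{\log((M+1)(T+1))})$. The key idea you are missing is to constrain $\nabla\tilde f$ at $\Theta(\sqrt{d})$ points along a line and integrate, with the grid spacing (and hence $\sqrt{d}$) dictated by spherical concentration of the queries' correlation with the hidden direction $\bw$.
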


This theorem holds for general values of the parameters $L,\epsilon,T,M,r$. Concretely, for parameter regimes of interest (see Remark \ref{remark:regimes}) we have the following corollary:
\begin{corollary}
Suppose that the accuracy parameter satisfies $\epsilon \leq c_2/2$. Then any smoothing algorithm which makes at most $T=poly(d,L,\epsilon^{-1})$ queries at a distance at most $r=\Ocal(\epsilon)$ from the input point, and returns vectors of norm at most $M=poly(d,L,\epsilon^{-1})$, must correspond to a smooth approximation $\tilde{f}(\cdot)$ with Lipschitz gradient parameter \emph{at least} $L=\tilde{\Omega}(\sqrt{d}/\epsilon)$.
\end{corollary}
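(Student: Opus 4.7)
The plan is to directly substitute the assumed parameter regime into the inequality provided by \thmref{thm:smoothingmain}. The corollary is essentially an exercise in bookkeeping---all substantive content is in the theorem itself---but requires a small trick to handle the fact that $L$ appears on both sides of the resulting inequality.

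First, I would lower-bound the right-hand side of \eqref{eq:theorem inequality}. Since $\epsilon\leq c_2/2$, the factor $c_2-\epsilon$ is bounded below by $c_2/2$, an absolute positive constant. Combined with $r=\mathcal{O}(\epsilon)$, this gives an RHS of the form $c_3\sqrt{d}/\epsilon$ for some constant $c_3>0$. Simultaneously, I would upper-bound the logarithmic factor on the left-hand side: the polynomial growth assumption $T,M\leq(dL\epsilon^{-1})^{k}$ for some constant $k$ yields $\log((M+1)(T+1))=O(\log(dL\epsilon^{-1}))$. Plugging both estimates into \thmref{thm:smoothingmain} reduces the corollary to showing that any $L>0$ satisfying
\[
L\cdot\sqrt{\log(dL\epsilon^{-1})}~\geq~c_4\cdot\frac{\sqrt{d}}{\epsilon}
\]
must obey $L=\tilde{\Omega}(\sqrt{d}/\epsilon)$.

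Finally---the only subtle step---I would invert this inequality to extract the claimed lower bound on $L$. Because $L$ also appears inside the logarithm, I would proceed by contradiction: assume that $L\leq c'\sqrt{d}/\bigl(\epsilon\cdot\sqrt{\log(d/\epsilon)}\bigr)$ for a sufficiently small $c'>0$. Then $\log L=O(\log(d/\epsilon))$, so $\log(dL\epsilon^{-1})=O(\log(d/\epsilon))$, and substituting this back into the LHS produces a quantity strictly smaller than $c_4\sqrt{d}/\epsilon$, contradicting the displayed inequality. This yields $L=\tilde{\Omega}(\sqrt{d}/\epsilon)$, with the tilde hiding a single $\sqrt{\log(d/\epsilon)}$ factor. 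The main (mild) obstacle is precisely this self-referential inversion of $L$ appearing on both sides; everything else amounts to substituting the parameter bounds of Remark \ref{remark:regimes} and tracking constants, which I would not belabor in the final write-up.
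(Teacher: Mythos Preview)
Your proposal is correct and matches the paper's intent: the paper states this corollary immediately after \thmref{thm:smoothingmain} without proof, treating it as a direct specialization to the parameter regime of Remark~\ref{remark:regimes}, and your substitution-plus-inversion argument is exactly the routine bookkeeping the paper leaves implicit. The self-referential handling of $L$ inside the logarithm is the only point requiring any care, and your contradiction argument disposes of it cleanly.
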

We note that the lower bound on $L$ in this corollary matches (up to logarithmic factors) the upper bound attained by randomized smoothing. This implies that at least under our framework and assumptions, randomized smoothing is an essentially optimal efficient smoothing method. 

Another implication of \thmref{thm:smoothingmain} is that even if we relax our assumption that $r=\Ocal(\epsilon)$, then as long as $r$ does not scale with the dimension $d$, the gradient Lipschitz parameter of any efficient smoothing algorithm must scale with the dimension (even though there exist dimension-free smooth approximations, as evidenced by Lasry-Lions regularization):
\begin{corollary}
Fix any accuracy parameter $\epsilon\leq c_2/2$, and any $r>0$. Then as long as the number of queries is $T=\text{poly}(d)$ and the output is of size  $M=\text{poly}(d)$, we must have $L\geq \tilde{\Omega}(\sqrt{d})$.
\end{corollary}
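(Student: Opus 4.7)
The plan is to derive the corollary directly from Theorem \ref{thm:smoothingmain} by substituting the hypothesized parameter bounds into inequality \eqref{eq:theorem inequality} and solving for $L$. Since the theorem is already stated and proved in full generality (for arbitrary values of $L,\epsilon,T,M,r$), no new construction of hard instances or information-theoretic argument is required; the content of the corollary is purely that the theorem's inequality is tight enough to force a $\sqrt{d}$ scaling in $L$ under the stated conditions.

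First I would apply \thmref{thm:smoothingmain} to the given $(L,\epsilon,T,M,r)$-smoother $\mathcal{A}$ (which by hypothesis satisfies $\mathsf{TICF}$), yielding
\[
L\sqrt{\log\bigl((M+1)(T+1)\bigr)} \;\geq\; c_1\cdot\frac{\sqrt{d}}{r}\bigl(c_2-\epsilon\bigr).
\]
Next I would bound the right-hand side from below using the assumption $\epsilon \leq c_2/2$, which gives $c_2-\epsilon \geq c_2/2$, so the RHS is at least $\tfrac{c_1 c_2}{2r}\sqrt{d}$. Since $r$ is a fixed constant independent of $d$, this lower bound is $\Omega(\sqrt d)$ with constants depending on $r$ and $c_2$ but not on $d$.

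Then I would bound the left-hand side from above by using the polynomial assumptions on $T$ and $M$. Writing $T \leq d^{a}$ and $M \leq d^{b}$ for some constants $a,b > 0$ (for all sufficiently large $d$), we have
\[
\log\bigl((M+1)(T+1)\bigr) \;\leq\; (a+b)\log d + O(1) \;=\; O(\log d),
\]
hence $\sqrt{\log((M+1)(T+1))} = O(\sqrt{\log d})$. Dividing both sides of the theorem's inequality by this quantity yields
\[
L \;\geq\; \frac{c_1(c_2-\epsilon)}{r}\cdot \frac{\sqrt{d}}{\sqrt{\log((M+1)(T+1))}} \;=\; \tilde\Omega(\sqrt d),
\]
which is exactly the claim. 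The argument is essentially a one-line rearrangement; I do not anticipate any technical obstacle beyond verifying that the $\tilde\Omega$ notation correctly absorbs the logarithmic factor in $d$ arising from the $\text{poly}(d)$ dependence of $T$ and $M$. The only mild subtlety worth flagging is that the hidden constants in $\tilde\Omega$ depend on $r$, $c_2$, and the polynomial degrees in the definitions of $T,M$, but not on $d$, which is the standard convention for the $\tilde\Omega$ notation used throughout the paper.
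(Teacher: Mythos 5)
Your proof is correct and is exactly the intended one-line rearrangement of \eqref{eq:theorem inequality}: bound $c_2-\epsilon\geq c_2/2$, absorb the fixed $r$ into the hidden constant, and note that $T,M=\text{poly}(d)$ makes $\sqrt{\log((M+1)(T+1))}=O(\sqrt{\log d})$, which the $\tilde\Omega$ notation absorbs. The paper states this corollary without a separate proof precisely because the derivation from \thmref{thm:smoothingmain} is this short, so your argument matches the paper's approach.
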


A third corollary of our theorem is that (perhaps unsurprisingly), there is no way to implement Lasry-Lions regularization efficiently in an oracle complexity framework: 
\begin{corollary}
If $\epsilon\leq c_2/2$ and $M=\text{poly}(d)$, then any smoothing algorithm for which $\tilde{f}(\cdot)$ corresponds to the Lasry-Lions regularization (which satisfies $L=\Ocal(1)$) must use a number of queries $T=\exp(\tilde{\Omega}(d))$. 
\end{corollary}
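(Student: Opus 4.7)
The plan is a direct plug-in into \thmref{thm:smoothingmain}: substitute the parameters corresponding to Lasry-Lions regularization into inequality \eqref{eq:theorem inequality} and solve for $T$. No new machinery is required; the corollary is a purely numerical consequence of the theorem, and the bulk of the work is the careful bookkeeping of hypotheses.

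First I would verify that the Lasry-Lions smoothing algorithm satisfies $\mathsf{TICF}$. On any constant function $f \equiv c$, one immediately checks from the definition that $P_{\delta,\nu}(f)\equiv c$, so $\nabla\tilde{f}\equiv\mathbf{0}$ on constant functions, and any natural implementation---whose queries and output vector depend only on the relative geometry of the input point and the local oracle responses---inherits the required translation invariance on this restricted class. Next I would collect the relevant parameter values: $L=\Ocal(1)$ is the defining property of Lasry-Lions regularization (as recalled in the text preceding the table in \secref{sec:smoothing}); the hypothesis $\epsilon\leq c_2/2$ gives $c_2-\epsilon\geq c_2/2=\Omega(1)$; and $r=\Ocal(\epsilon)=\Ocal(1)$, using the observation from Remark~\ref{remark:regimes} that the Lasry-Lions smoothing at $\bx$ depends only on the behavior of $f$ in an $\Ocal(\epsilon)$-neighborhood of $\bx$.

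Substituting these into \eqref{eq:theorem inequality}, I obtain
\[
\Ocal(1)\cdot\sqrt{\log((M+1)(T+1))} ~\geq~ c_1\cdot\frac{\sqrt{d}}{\Ocal(1)}\cdot\frac{c_2}{2} ~=~ \Omega(\sqrt{d})~,
\]
so $\log((M+1)(T+1))\geq\Omega(d)$, i.e., $(T+1)(M+1)\geq\exp(\Omega(d))$. The hypothesis $M=\text{poly}(d)$ then forces $T\geq\exp(\Omega(d))/\text{poly}(d)=\exp(\tilde{\Omega}(d))$, which is the claimed bound.

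The only subtlety---and the mildest possible obstacle---is that the statement of the corollary does not explicitly constrain $r$; I rely on the fact that any reasonable oracle-based implementation of Lasry-Lions regularization inherently operates in the $r=\Ocal(\epsilon)$ regime, as discussed in Remark~\ref{remark:regimes}. More generally, the same computation yields $T\geq\exp(\tilde{\Omega}(d))$ whenever $r=\text{poly}(d)$, with the extra polylogarithmic factors absorbed into the $\tilde{\Omega}(\cdot)$ notation; hence the conclusion is robust to this implicit choice.
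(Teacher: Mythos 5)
Your derivation is correct and is exactly what the paper intends by stating this as a corollary of \thmref{thm:smoothingmain}: substitute $L=\Ocal(1)$, $c_2-\epsilon\geq c_2/2$, $M=\text{poly}(d)$, and $r=\Ocal(1)$ into \eqref{eq:theorem inequality} and solve for $T$. Your side remark about the missing explicit constraint on $r$ is a genuine (minor) imprecision in the corollary statement, and your resolution via Remark~\ref{remark:regimes} is the intended reading.
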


\begin{remark}[Dependence on $M$]
Our definition of a smoothing algorithm focuses on the expectation of the algorithm's output. This leads to a logarithmic dependence on $M$ (an upper bound on the algorithm's output) in \thmref{thm:smoothingmain}, since in the proof we need to bound the influence of exponentially-small-probability events on the expectation. It is plausible that the dependence on $M$ can be eliminated altogether, by changing the definition of a smoothing algorithm to focus on the expectation of its output, conditioned on all but highly-improbably events. However, that would complicate the definition and our proof. 
\end{remark}

\begin{figure}[t]
    \centering
    \includegraphics[scale=0.4, trim=150 200 150 240]{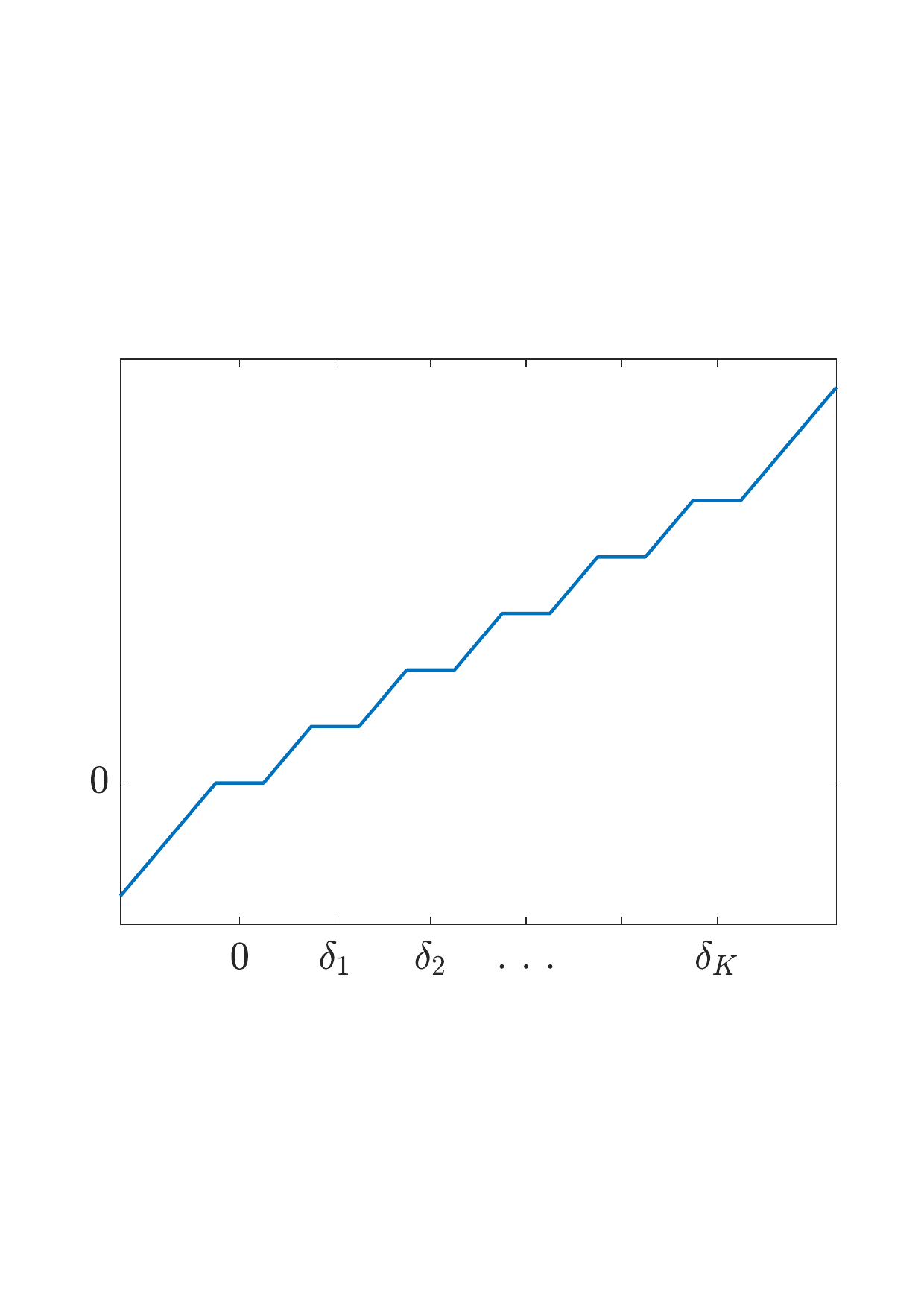}
    \caption{Illustration of $g(x)$, where $\Delta=\left\{0,\delta_1,\dots,\delta_{K}\right\}\subset[0,1]$.}
    \label{fig:g illustration}
\end{figure}
Before presenting the formal proof of \thmref{thm:smoothingmain} in the next section, we outline the main proof idea. Consider a one dimensional monotonically increasing function $g$, which is locally constant at a $\Omega(1/\sqrt{d})$ neighborhood of a grid $\Delta=\{0,\delta_1,\dots,\delta_K\}$ of points in $[0,1]$, with $K$ roughly of order $\sqrt{d}$ (see \figref{fig:g illustration}). We define $f\left(\bx\right)=g(\bw^\top \bx)$, where $\bw\in\SSS^{d-1}$ is a uniformly random unit vector. We note that $f$ is a simple function, easily implemented by (say) a one-hidden layer ReLU neural network, and that it belongs to all of the restricted classes discussed in \remarkref{remark:more assumptions}. 

We now proceed to analyze what happens when a smoothing algorithm is given points of the form $\delta_i\bw$, for $\delta_i\in \Delta$. Since $\bw$ is random, and the algorithm is assumed to be translation invariant, it can be shown (via a concentration of measure argument) that the algorithm is overwhelmingly likely to produce queries in directions which all have $\tilde{\Ocal}(1/\sqrt{d})$ correlation with $\bw$, as long as the number of queries is polynomial. Consequently, with high probability, the queries all lie in a region where the function $f(\cdot)$ is flat, and the algorithm cannot distinguish between it and a constant function. By the translation-invariance property, this implies that the gradient estimates $\nabla \tilde{f}(\delta_i \bw)$ must be of small norm, uniformly for all $\delta_i \bw$. Combining the observation that $\nabla\tilde{f}(\cdot)$ is small along order-of-$\sqrt{d}$-many points between $\bf{0}$ and the unit vector $\bw$, together with the fact that $\nabla\tilde{f}(\cdot)$ is $L$-Lipschitz, we can derive an upper bound on how much $\tilde{f}(\cdot)$ can increase along the line segment between $\mathbf{0}$ and $\bw$, roughly on the order of $L/\sqrt{d}$. On the other hand, $\tilde{f}(\cdot)$ is an approximation of $f$, which has a constant increase between $\bf{0}$ and $\bw$. Overall this allows us to deduce a lower bound on $L$ scaling as $\sqrt{d}$, which results in the theorem.

\section{Proofs}\label{sec:proofs}

\subsection{Proof of \thmref{thm:main}}\label{subsec:proof}

We start by claiming that when optimizing a one dimensional Lipschitz function that has large derivatives everywhere apart from it's global minimum, no algorithm can guarantee this minimum can be \emph{exactly} found with some positive probability within any finite time. The following proposition formalizes this claim.

\begin{proposition} \label{prop: hard 1dim}
For any algorithm $\Acal$ interacting with a local oracle, any $T\in\NN$ and any $\delta>0$, there exists a function $h:\reals\to[0,\infty)$ and $\rho(T,\delta)>0$ such that
\begin{itemize}
    \item $h$ is $2$-Lipschitz, with a single global minimum $x^*\in(0,1)$, and $h(0)=1$.
    \item $\forall x\neq x^*,~\forall g\in\partial h(x):~|g|\geq1$.
    \item The first $T$ iterates produced by $\Acal(h)$: $x_1^{\Acal(h)},\dots,x_T^{\Acal(h)}$, satisfy $\Pr_{\Acal}[\min_{t\in[T]}|x_t-x^*|< \rho]<\delta$.
\end{itemize}
\end{proposition}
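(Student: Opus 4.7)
The plan is to build $h$ adversarially from $\Acal$. I would work with a family of piecewise‐linear ``V‐shaped'' functions $\{h_a\}_{a\in(0,1)}$ of the form $h_a(x)=1+\int_0^x s_a(y)\,dy$, where the slope function satisfies $s_a(y)\in[-2,-1]$ for $y<a$ and $s_a(y)\in[1,2]$ for $y>a$. Any such $h_a$ is $2$-Lipschitz, has $|g|\geq1$ off its unique global minimum $x^*=a$, satisfies $h_a(0)=1$, and $h_a(a)\geq 1-2a$, which can be made nonnegative by restricting $a$ (and using slopes close to $1$). The adversary's degree of freedom is the choice of $s_a$ within the allowed slope intervals, which in particular controls the function values $h_a(x)$ away from $a$.

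Next, I would run an adaptive adversary against $\Acal$ that maintains a feasible interval $I_t\subseteq(0,1)$ of remaining candidate $a$-values, starting from $I_0=(0,1)$. At each query $x_t$, the adversary commits to the behavior of $h$ on a small neighborhood of $x_t$ as follows. If $x_t\notin I_t$, it exploits the slope flexibility to pick a response $(h(x_t),g(x_t))$ that lies in the intersection (over $a\in I_t$ and allowed slope profiles) of feasible oracle responses; this intersection is nonempty if $I_t$ is small enough, so $I_{t+1}=I_t$. If $x_t\in I_t$, the gradient's sign must declare whether $a>x_t$ or $a<x_t$, which shrinks $I_t$ by at most a factor of $2$; the adversary picks the larger half. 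After $T$ queries, $|I_T|\geq 2^{-T}$. The adversary then sets $x^*=a^*:=\mathrm{center}(I_T)$ and releases $h:=h_{a^*}$ with slopes chosen to match all prior commitments. By construction, every query is at distance at least $|I_T|/2\geq 2^{-T-1}$ from $a^*$, so any $\rho<2^{-T-1}$ works for a deterministic $\Acal$.

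To handle randomized $\Acal$, I would apply Yao's minimax principle: exhibit a distribution over $\{h_a\}$ (e.g., uniform on a fine grid in $(0,1)$) and argue that for any deterministic algorithm, the expected-over-$a$ probability of hitting within $\rho$ is at most $\delta$, by bounding the measure of $a$ whose $\rho$-neighborhood meets the query sequence. This yields the proposition with $\rho(T,\delta)\leq\min\{2^{-T-2},\,\delta/(4T)\}>0$. The main obstacle is the consistency step for $x_t\notin I_t$: with the rigid V-family (slopes exactly $\pm1$) a single query on the right of $a$ reveals $a$ exactly through $h_a(x)=x-2a+1$, collapsing the intersection argument. The construction genuinely requires using slope flexibility within $[1,2]$ so that different candidates $a\in I_t$ can produce matching values $h_a(x_t)$ via different right-slope profiles; verifying rigorously that all the per-neighborhood commitments issued during the game can be stitched into a single $h_{a^*}$ satisfying $2$-Lipschitzness, $|g|\geq 1$ off $a^*$, and $h_{a^*}\geq 0$ globally is where most of the technical work concentrates.
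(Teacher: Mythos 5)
The proposal takes a genuinely different route from the paper. The paper's proof is by contradiction: it assumes the algorithm can find $x^*$ within any $\rho$ with probability $\geq\delta$, shows (by continuity and amplification) that some deterministic algorithm finds $x^*$ \emph{exactly} with probability $\geq 1/2$, and then exhibits a \emph{fixed} distribution over a recursively-defined family $h^N_\sigma$ (indexed by $\sigma\in\{0,1\}^N$) with nested intervals $I_{\sigma_1,\dots,\sigma_k}$ of length $4^{-k}$, for which Lemma~\ref{lem: h outside I} guarantees that the local oracle outside $I_{\sigma_1,\dots,\sigma_k}$ gives no information about $\sigma_{k+1},\dots,\sigma_N$. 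Your approach instead runs an adaptive adversary against the algorithm, maintaining a feasible interval of candidate minima and exploiting slope flexibility within $[1,2]$. You correctly flag the stitching/consistency issue as a real difficulty, and you're right that rigid slopes are hopeless (the value at one point to the right of $a$ pins $a$). But there are two additional gaps worth naming.

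First, a technical one: the claim $|I_T|\geq 2^{-T}$ only accounts for the gradient-sign bisection, not for the \emph{value} commitments. Once the oracle must return $h(x_t)$ consistent with prior commitments, the set of surviving candidates shrinks by more than a factor of $2$. A direct computation (e.g., after committing to $h(1/2)$ and then querying $3/4$) shows the surviving window can shrink by a factor of $3$ or more in a single step, depending on the geometry. This does not kill the approach since the proposition only requires $\rho>0$, but the stated constant is wrong and the bookkeeping is more delicate than the proposal suggests, especially once the global constraint $h\geq 0$ starts interacting with early slope commitments (slopes near $-2$ early on burn the nonnegativity budget and silently shrink the admissible range of $a^*$).

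Second, and more seriously, the Yao step does not go through as sketched. The adaptive-adversary argument works against a \emph{fixed deterministic} algorithm because the adversary can simulate the queries and tailor $h_{a^*}$ to them. For randomized algorithms, Yao's principle requires exhibiting a \emph{fixed} distribution over functions, independent of the algorithm, that is hard for every deterministic algorithm. Your suggestion ``uniform over a fine grid of $a$'' requires committing to a slope profile $s_a(\cdot)$ for each $a$ in advance; if you pick a simple profile (constant slopes or rigid $\pm 1$), a single query on either side pins $a$, and the proposed bound ``$\delta/(4T)$ via the measure of $a$ whose $\rho$-neighborhood meets the query sequence'' is invalid because the query sequence is $a$-dependent through the oracle responses (a union bound over queries only works if the queries are essentially the same across most $a$). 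Making this step work requires a fixed family whose members are locally indistinguishable outside a sequence of nested intervals --- and that is precisely what the paper's recursive $h^N_\sigma$ construction is engineered to provide. In short: the intuition (hide $x^*$ behind slope flexibility, argue the algorithm cannot pin it down) is right, but the two ingredients you leave to ``technical work'' --- consistency of commitments and the reduction to a fixed hard distribution --- are exactly the content of the paper's proof, not peripheral details.
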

\begin{remark}
The quantity $\rho(T,\delta)$ given by \propref{prop: hard 1dim} depends only on $T,\delta$. In particular, it is worth noticing that it is uniform over any algorithm $\Acal$.
\end{remark}
The proof of \propref{prop: hard 1dim} is inspired by a classic lower bound construction for convex optimization due to Nemirovski \citep[Lemma 1.1.1]{nemirovski1995lecture}. The basic idea is to construct two functions that are identical outside some segment, in which their distinct minima lie at distance $2\rho$ one from another. Any algorithm that queries only outside that segment throughout it's first $T-1$ iterates cannot distinguish between the two functions, thus has probability at least $\frac{1}{2}$ to produce it's next iterate at least $\rho$ away from the minimum of the function it is actually optimizing (see \figref{fig:h_1dim_hard} in \appref{app: 1dim hardness proof} for an illustration). By looking at smaller and smaller segments, this idea can be generalized to any number of queries.
Unfortunately, the gradient size of Nemirovski's original construction shrinks exponentially with $T$, therefore cannot be applied to our setting as it does not satisfy the second bullet of \propref{prop: hard 1dim}, which is crucial to the rest of our proof. Nonetheless, we were able to provide a nonconvex construction similar in spirit which satisfies our desired qualities. Due to the substantial length and technicality of the proof, we defer it to \appref{app: 1dim hardness proof}. We continue by showing that given \propref{prop: hard 1dim}, this claim generalizes to any dimension.

\begin{lemma} \label{lem: h provider}
For any algorithm $\Acal$ interacting with a local oracle, any $T\in\NN$, $d\geq2$ and any $\delta>0$, there exists a function $h:\reals\to[0,\infty)$ and $\rho(T,\delta)>0$ such that
\begin{itemize}
    \item $h$ is $2$-Lipschitz, with a single global minimum $x^*\in(0,1)$, and $h(0)=1$.
    \item $\forall x\neq x^*,~\forall g\in\partial h(x):~|g|\geq1$.
    \item If we define $\bar{f}_h(x_1,\dots,x_d):=h(x_d)+\frac{1}{4}\sqrt{\sum_{i=1}^{d-1}x_i^2}$, then the first $T$ iterates produced by $\Acal(\bar{f}_h)$: $\bx_1^{\Acal(\bar{f}_h)},\dots,\bx_T^{\Acal(\bar{f}_h)}$,
    satisfy $\Pr_{\Acal}\left[\min_{t\in[T]}\norm{\bx_t^{\Acal(\bar{f}_h)}-\bx^*}<\rho\right]<\delta$, where $\bx^*:=(0,\dots,0,x^*)$.
\end{itemize}
\end{lemma}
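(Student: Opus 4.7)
The approach is to reduce the high-dimensional problem to the one-dimensional hardness result of \propref{prop: hard 1dim}. I would construct a one-dimensional ``simulator'' algorithm $\Acal'$ whose behavior on any 1D function $h$ mimics the behavior of $\Acal$ on $\bar{f}_h$, and then invoke \propref{prop: hard 1dim} on $\Acal'$ to extract the desired $h$ and $\rho$.

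The key observation is that $\bar{f}_h$ decomposes as $\bar{f}_h(\bx)=h(x_d)+\phi(x_1,\dots,x_{d-1})$, where $\phi(\by):=\tfrac{1}{4}\norm{\by}$ is a fixed function independent of $h$. In particular, for every $\bx\in\reals^d$, the germ (local behavior) of $\bar{f}_h$ at $\bx$ is determined by the germ of $h$ at $x_d$ together with the germ of $\phi$ at $(x_1,\dots,x_{d-1})$, the latter requiring no access to $h$. Hence any local oracle response $\OO_{\bar{f}_h}(\bx)$ can be reconstructed from the germ of $h$ at $x_d$ and the vector $(x_1,\dots,x_{d-1})$ alone.

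Based on this, I would define $\Acal'$ as follows. Equip it with a local oracle for $h$ rich enough to reveal the entire germ of $h$ at any queried scalar; such an oracle is admissible in the paper's framework, since its output depends only on the local behavior of $h$ at that point. Then $\Acal'$ internally runs $\Acal$ step by step, sharing randomness: whenever the simulated $\Acal$ issues a query $\bx_t\in\reals^d$, the simulator queries its own oracle at the scalar $(\bx_t)_d$, combines the returned germ with the known $\phi$ to synthesize $\OO_{\bar{f}_h}(\bx_t)$, and feeds this synthesized response back to $\Acal$. The scalar iterates reported by $\Acal'$ are taken to be $(\bx_1)_d,\dots,(\bx_T)_d$. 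By construction, for every 1D function $h$ the joint distribution of $\Acal'$'s scalar iterates equals the distribution of the $d$-th coordinates of the iterates produced by $\Acal$ when run on $\bar{f}_h$.

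Applying \propref{prop: hard 1dim} to $\Acal'$ yields a function $h$ satisfying the first two bullets of the lemma, together with $\rho=\rho(T,\delta)>0$, such that $\Pr_{\Acal'}[\min_t |(\bx_t)_d-x^*|<\rho]<\delta$. Since $\bx^*=(0,\dots,0,x^*)$, we have $\norm{\bx_t-\bx^*}\geq |(\bx_t)_d-x^*|$ for every $t$, so this bound upgrades immediately to $\Pr_{\Acal}[\min_t\norm{\bx_t-\bx^*}<\rho]<\delta$, establishing the third bullet. The main obstacle I foresee is purely formal: one must verify carefully that the ``germ-revealing'' oracle for $h$ qualifies as a local oracle in the sense of \secref{sec:preliminaries}, and that synthesizing $\OO_{\bar{f}_h}(\bx_t)$ out of it together with $\phi$ preserves the locality condition over which \propref{prop: hard 1dim} quantifies. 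Both are routine once locality is unfolded, but they merit explicit verification since the proposition must apply to \emph{every} algorithm--oracle pair, including the contrived one used by the simulator.
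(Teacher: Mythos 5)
Your proposal is correct and takes essentially the same approach as the paper: you construct a one-dimensional simulator algorithm $\Acal'$ that runs $\Acal$ internally, feeds it synthesized oracle responses reconstructed from local information about $h$ at the $d$-th coordinates, reports the $d$-th coordinates as its own iterates, and then invokes \propref{prop: hard 1dim} together with the inequality $\norm{\bx_t-\bx^*}\geq|(\bx_t)_d-x^*|$. The only cosmetic difference is that you use a maximal (germ-revealing) local oracle for $h$ explicitly, whereas the paper phrases the reduction via a strengthened oracle $\overline{\OO}$ that returns the full hyperplane restriction of $\bar{f}_h$ together with $\OO_h(x_d)$; these amount to the same argument.
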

\begin{proof}

Since our goal is to provide a lower bound $\rho$ for optimizing $\bar{f}_h$ with the algorithm $\Acal$, we can assume without loss of generality that $\Acal$ interacts with an even stronger oracle which provides more than just local information about $\bar{f}_h$. Specifically, suppose $\Acal$ has access to an oracle of the form
\[
\overline{\OO}_{\bar{f}_h}(\bx)=\left(
\left\{((z_1,\dots,z_{d-1},x_d),\bar{f}_h(z_1,\dots,z_{d-1},x_d))\middle|(z_1,\dots,z_{d-1})\in\reals^{d-1}\right\},\OO_{h}(x_d)
\right)
\]
for some local oracle $\OO$. Namely, a full global description of the function $\bar{f}_h$ over the affine subspace $\{\bz|~z_d=x_d\}$, coupled with local information with respect to the last coordinate. Note that by definition of $\bar{f}_h(\bx)$, changing $x_d$ only affects $h(x_d)$, thus all the information about $x_d$ is indeed conveyed through $h(x_d)$.

Assuming $\Acal$ interacts with the described $\overline{\OO}$, we turn to describe \emph{another} algorithm $\Acal'$, which given local oracle access to a one dimensional function $h$ works as follows:
\begin{itemize}
    \item $\Acal'$ simulates $\Acal$ and receives it's first iterate $\bx_1$. It then produces the first iterate $(\bx_1)_d$.
    \item Given $\Acal$'s current iterate $\bx_t$, $\Acal'$ queries $\OO_{h}((\bx_t)_d)$. Then, $\Acal'$ feeds into $\Acal$
    \[
    \left(
    \left\{((z_1,\dots,z_{d-1},(\bx_t)_d),\bar{f}_h(z_1,\dots,z_{d-1},(\bx_t)_d)\middle|(z_1,\dots,z_{d-1})\in\reals^{d-1}\right\},\OO_{h}((\bx_t)_d)
    \right)~,
    \]
    and receives from $\Acal$ it's next iterate $\bx_{t+1}$. $\Acal'$ then produces it's next iterate $(\bx_{t+1})_d$.
\end{itemize}

It is clear that $\Acal'$ is indeed a well defined algorithm which interacts with a local oracle. Hence, let $h(\cdot),\rho(T,\delta)$ be the function and the positive parameter given by \propref{prop: hard 1dim} for $\Acal',T,\delta$. We will show these $h,\rho$ satisfy all three bullets in the lemma. The first two bullets are immediate, thus it only remains to prove the third. To that end, note that by construction of $\Acal'$, $\Acal'$'s iterates when applied to $h$ are exactly the $d$'th coordinates of $\Acal$ when applied to $\bar{f}$. That is, $(\bx_t^{\Acal(\bar{f})})_d=x_t^{\Acal'(h)}$. Consequently,
\begin{align*}
\Pr_{\Acal}\left[\min_{t\in[T]}\norm{\bx_t^{\Acal(\bar{f})}-\bx^*}<\rho\right]
&\leq
\Pr_{\Acal}\left[\min_{t\in[T]}|(\bx_t^{\Acal(\bar{f})})_{d}-(\bx^*)_d|<\rho\right] \\
&=\Pr_{\Acal'}\left[\min_{t\in[T]}|x_t^{\Acal'(h)}-x^*|<\rho\right]<\delta~,
\end{align*}
where the last inequality follows by definition of $h,\rho$.
\end{proof}

From now on, we fix some algorithm $\Acal$ interacting with a local oracle, $T\in\NN$, $d\geq2$ and set $\delta=T\exp(-d/36)$. We denote by $h(\cdot),~\rho>0,~\bx^*\in\reals^d$ their associated function, positive parameter and point given by \lemref{lem: h provider}.
Given any nonzero vector $\bw\in\reals^d$ such that $w_{d}=0$, we define 
\[
f_{\bw}(x_1,\dots,x_d):=h(x_d+x^*)+\frac{1}{4}\sqrt{\sum_{i=1}^{d-1}x_i^{2}}-\left[\langle\overline{\bw},\bx+\bw\rangle-\frac{1}{2}\|\bx+\bw\|\right]_{+}~,
\]
where $\overline{\bw}:=\bw/\norm{\bw}$.
This function looks like the ``hard'' function given by \lemref{lem: h provider} (up to a shift along the $d$'th axis) as long as $\bx$ is not highly correlated with $\bw$, which makes the ReLU term vanish. However, unlike the hard function which has a stationary point, the ReLU term adds at that point a large gradient component in the $\bw$ direction, preventing it from being even $\epsilon$-stationary. Moreover, the following lemma shows that this function has no $\epsilon$-stationary points for small $\epsilon$.

\begin{lemma} \label{lem: f_w properties}
For any nonzero $\bw\in\reals^d$ such that $w_d=0$, $f_{\bw}$ is $\frac{15}{4}$-Lipschitz and has no $\epsilon$-stationary points for any $\epsilon<\frac{1}{4\sqrt{2}}$.
\end{lemma}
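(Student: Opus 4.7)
The plan is to verify each bullet by direct examination of the generalized gradient of $f_\bw$. For the Lipschitz bound, I would sum the Lipschitz constants of the three summands: $\bx\mapsto h(x_d+x^*)$ is $2$-Lipschitz (since $h$ is, by \lemref{lem: h provider}), $\bx\mapsto \tfrac{1}{4}\|\bx_{1:d-1}\|$ is $\tfrac{1}{4}$-Lipschitz, and the ReLU term has argument $\langle\overline{\bw},\bx+\bw\rangle-\tfrac{1}{2}\|\bx+\bw\|$ which is $\tfrac{3}{2}$-Lipschitz by the triangle inequality, so composing with $[\cdot]_+$ (itself $1$-Lipschitz) gives $\tfrac{3}{2}$-Lipschitz. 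Summing yields $2+\tfrac{1}{4}+\tfrac{3}{2}=\tfrac{15}{4}$.

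For the absence of $\epsilon$-stationary points I need a uniform-in-$\bx$ lower bound on the norms of elements of $\partial f_\bw(\bx)$. Using the Clarke sum rule, I would write any $\bg\in\partial f_\bw(\bx)$ as $\bg=\ba-\bb$ with $\ba\in\partial A(\bx)$ and $\bb\in\partial B(\bx)$, where $A(\bx):=h(x_d+x^*)+\tfrac{1}{4}\|\bx_{1:d-1}\|$ and $B(\bx):=[\psi(\bx)]_+$ with $\psi(\bx):=\langle\overline{\bw},\bx+\bw\rangle-\tfrac{1}{2}\|\bx+\bw\|$. Since the two summands of $A$ use disjoint sets of coordinates, I may decompose $\ba=a_d\be_d+\ba_{1:d-1}$ with $a_d\in\partial h(x_d+x^*)$ and $\ba_{1:d-1}$ an element of the generalized gradient of the radial term (norm at most $\tfrac{1}{4}$, equal to $\tfrac{1}{4}\bx_{1:d-1}/\|\bx_{1:d-1}\|$ at nonzero points). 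A standard chain-rule computation shows $\bb=\lambda\bigl(\overline{\bw}-\tfrac{1}{2}\widehat{\bx+\bw}\bigr)$ for some $\lambda\in\{0,1\}$ (in the interior of the inactive or active region of the ReLU) or $\lambda\in[0,1]$ (on the boundary $\psi=0$).

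When $x_d\neq 0$, the second bullet of \lemref{lem: h provider} gives $|a_d|\geq 1$; on the other hand, the $d$-th coordinate of $\bb$ has magnitude at most $\lambda\cdot|x_d|/(2\|\bx+\bw\|)\leq\tfrac{1}{2}$, so $|g_d|\geq\tfrac{1}{2}>\tfrac{1}{4\sqrt{2}}$. When $x_d=0$ both $a_d$ and $b_d$ may vanish, and the analysis collapses to $\bg_{1:d-1}$. If additionally $\bx_{1:d-1}=\mathbf{0}$, then $\psi(\mathbf{0})=\tfrac{1}{2}\|\bw\|>0$ forces $\lambda=1$ and $\widehat{\bx+\bw}=\overline{\bw}$, so $\bb_{1:d-1}=\tfrac{1}{2}\overline{\bw}$ and $\|\bg_{1:d-1}\|\geq\tfrac{1}{2}-\tfrac{1}{4}=\tfrac{1}{4}$ by the triangle inequality. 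The isolated degenerate point $\bx_{1:d-1}+\bw=\mathbf{0}$ is handled separately by expanding $\partial B$ at the singularity of $\|\cdot\|$ as the convex hull of $\{\mathbf{0}\}\cup\{\overline{\bw}-\tfrac{1}{2}\hat{\bu}:\angle(\hat{\bu},\overline{\bw})\leq\pi/3\}$ and inspecting the component of $\bg_{1:d-1}$ along $\overline{\bw}$, which remains at most $-\tfrac{1}{4}$.

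The core of the argument is the generic sub-case $x_d=0$, $\by:=\bx_{1:d-1}\neq\mathbf{0}$, and $\by+\bw\neq\mathbf{0}$. Here I would work in the two-plane spanned by $\overline{\bw}$ and $\widehat{\by+\bw}$, parameterizing $\hat\by$ by an angle $\alpha$ and $\widehat{\by+\bw}$ by an angle $\theta$ (both relative to $\overline{\bw}$), and expand
\[
\bg_{1:d-1}=\tfrac{1}{4}\hat{\by}-\lambda\bigl(\overline{\bw}-\tfrac{1}{2}\widehat{\by+\bw}\bigr)
\]
in these coordinates. The active/inactive/boundary status of the ReLU translates respectively to $\theta<\pi/3$ (forcing $\lambda=1$), $\theta>\pi/3$ (forcing $\lambda=0$), or $\theta=\pi/3$ (allowing $\lambda\in[0,1]$). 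A direct trigonometric computation in each sub-case yields $\|\bg_{1:d-1}\|\geq\tfrac{1}{4}>\tfrac{1}{4\sqrt{2}}$. The main obstacle is the boundary case $\theta=\pi/3$, where $\lambda$ ranges over the entire interval and one must verify the bound uniformly in $\lambda$ and in the compatible values of $\alpha$; the key observation is that the geometric constraint forces $\alpha\in(\pi/3,\pi)$, hence $\sin(\alpha-\pi/3)\geq 0$, which keeps the cross-term in $\|\bg_{1:d-1}\|^2$ nonnegative and preserves the bound.
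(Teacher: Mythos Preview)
Your proposal is correct and follows the same overall case split as the paper (first $x_d\neq 0$ via $|a_d|\geq 1$; then $x_d=0$ split into $\bx=\mathbf{0}$, $\bx=-\bw$, and the generic case with the ReLU inactive/active/boundary). The Lipschitz bound and all non-boundary cases are handled identically in substance.

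The one genuine difference is the boundary sub-case $\langle\overline{\bw},\overline{\bx+\bw}\rangle=\tfrac{1}{2}$ (your $\theta=\pi/3$). The paper projects $\bg$ onto $\mathrm{span}(\bw)^{\perp}$ to obtain $\|\bg\|\geq\tfrac{1}{4}\sqrt{1-\langle\overline{\bw},\overline{\bx}\rangle^{2}}$, then argues by contradiction: assuming $\|\bg\|\leq\epsilon$ forces $|\langle\overline{\bw},\overline{\bx}\rangle|\geq\sqrt{1-16\epsilon^{2}}$, and a sign split plus a second inner-product estimate closes the argument only when $\epsilon<\tfrac{1}{4\sqrt{2}}$. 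Your planar-angle parametrization is cleaner and in fact sharper: writing $\|\bg_{1:d-1}\|^{2}=\tfrac{1}{16}+\tfrac{3}{4}\lambda^{2}+\tfrac{\sqrt{3}}{4}\lambda\sin(\alpha-\pi/3)$ and observing that $\theta=\pi/3$ forces $\alpha\in(\pi/3,\pi)$ (since $\widehat{\by+\bw}$ lies strictly between $\hat{\by}$ and $\overline{\bw}$) yields $\|\bg_{1:d-1}\|\geq\tfrac{1}{4}$ directly, uniformly in $\lambda$. So your route actually proves the stronger statement ``no $\epsilon$-stationary points for $\epsilon<\tfrac{1}{4}$'' in this sub-case, whereas the paper's projection argument is what produces the $\tfrac{1}{4\sqrt{2}}$ threshold in the lemma. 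Your direct computation of $\partial B(-\bw)$ from limits of gradients (rather than the chain-rule inclusion the paper uses) is also a minor sharpening, though both lead to the same $\tfrac{1}{4}$ bound at that point.
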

\begin{proof}
Throughout the proof we omit the $\bw$ subscript and refer to $f_{\bw}(\cdot)$ as $f(\cdot)$.

The functions $\bx\mapsto h(x_d+x^*)$, $\bx\mapsto\frac{1}{4}\sqrt{\sum_{i=1}^{d-1}x_i^{2}}$, $\bx\mapsto\langle\overline{\bw},\bw+\bx\rangle$, $\bx\mapsto\frac{1}{2}\|\bx+\bw\|$ and $x\mapsto[x]_+$ are $2$-Lipschitz, $\frac{1}{4}$-Lipschitz, $1$-Lipschitz, $\frac{1}{2}$-Lipschitz and $1$-Lipschitz respectively, from which it follows that $f$ is $2+\frac{1}{4}+1+\frac{1}{2}=\frac{15}{4}$-Lipschitz.

In order to prove that no point $\bx$ is $\epsilon$-stationary for any $\epsilon<\frac{1}{4\sqrt{2}}$, we will use the facts that $\partial(g_1+g_2)\subseteq \partial g_1 + \partial g_2$, and that if $g_1$ is univariate, $\partial (g_1\circ g_2)(\bx)\subseteq \text{conv}\{r_1 \br_2:r_1\in \partial g_1(g_2(\bx)),\br_2\in \partial g_2(\bx)\}$ (see \citealp[Proposition 2.3.3 and Theorem 2.3.9]{clarke1990optimization}). We examine six exhaustive cases:
\begin{itemize}
    \item $x_{d}\neq0$. In this case we have
    \[
    \partial f(\bx)\subseteq
    \left\{t\cdot\mathbf{e}_{d}+\frac{1}{4}\bu-s\left(\overline{\bw}-\frac{1}{2}\bv\right)
    \middle|
    ~|t|\geq1,~u_d=0,~s\in[0,1],~\|\bv\|\leq1\right\}~.
    \]
    For any $\bg\in\partial f(\bx)$ corresponding to some $t,\bu,s,\bv$, using the fact that $u_d=w_d=0$ we get
    \[
    \|\bg\|\geq\langle\bg,\sign(t)\cdot\mathbf{e}_{d}\rangle\geq|t|-\frac{1}{2}\geq\frac{1}{2}>\frac{1}{4\sqrt{2}}~.
    \]
    
    \item $\bx=\bf{0}$. In this case we have
    \[
    \partial f(\bx)\subseteq
    \left\{
    t\cdot\mathbf{e}_d+\frac{1}{4}\bu-\overline{\bw}+\frac{1}{2}\overline{\bw}
    \middle|
    ~t\in[-2,2],~\sum_{i=1}^{d-1}u_i^2\leq1,~u_{d}=0
    \right\}~.
    \]
    For any vector in $\partial f(\bx)$ corresponding to some $t,\bu$, we use the fact that projecting any vector onto $\mathrm{span}\{\mathbf{e}_1,\dots,\mathbf{e}_{d-1}\}$ cannot increase it's norm, in order to get
    \[
    \left\|t\cdot\mathbf{e}_d+\frac{1}{4}\bu-\frac{1}{2}\overline{\bw}\right\|
    \geq\left\|\frac{1}{4}\bu-\frac{1}{2}\overline{\bw}\right\|
    =\left\|\frac{1}{2}\overline{\bw}-\frac{1}{4}\bu\right\|
    \geq\frac{1}{2}-\frac{1}{4}>\frac{1}{4\sqrt{2}}~.
    \]
    
    \item $\bx=-\bw$. In this case we have
    \[
    \partial f(\bx)\subseteq
    \left\{
    t\cdot\mathbf{e}_d-\frac{1}{4}\overline{\bw}-s\left(\overline{\bw}-\frac{1}{2}\bu\right)
    \middle|
    ~t\in[-2,2],~s\in[0,1],~\|\bu\|\leq1
    \right\}~.
    \]
    For any $\bg\in\partial f(\bx)$ corresponding to some $t,s,\bu$, using the fact that $w_d=0$ we get
    \begin{align*}
    \|\bg\|\geq\langle\bg,-\overline{\bw}\rangle
    &=\left\langle-\frac{1}{4}\overline{\bw},-\overline{\bw}\right\rangle
    +\left\langle-s\overline{\bw},-\overline{\bw}\right\rangle
    +\left\langle\frac{s}{2}\bu,-\overline{\bw}\right\rangle \\
    &=\frac{1}{4}+s-\frac{s}{2}\langle\bu,\bw\rangle
    \geq\frac{1}{4}+s-\frac{s}{2}\geq\frac{1}{4}>\frac{1}{4\sqrt{2}}~.
    \end{align*}
    
    \item $x_{d}=0$, $\bx\notin\{\bf{0},-\bw\}$, $\langle\overline{\bw},\overline{\bx+\bw}\rangle<\frac{1}{2}$. Note that
    \[
    \langle\overline{\bw},\overline{\bx+\bw}\rangle<\frac{1}{2}
    \implies\langle\overline{\bw},\bx+\bw\rangle-\frac{1}{2}\|\bx+\bw\|<0~,
    \]
    and that the set $\left\{\bx\middle|~\langle\overline{\bw},\overline{\bx+\bw}\rangle<\frac{1}{2}\right\}\setminus\left\{\mathbf{0},-\bw\right\}$ is an open set in $\reals^d$. Thus for every such point, the function $\bx\mapsto f(\bx)$ is locally identical to $\bx\mapsto h(x_d+x^*)+\frac{1}{4}\sqrt{\sum_{i=1}^{d-1}x_i^{2}}$, which in particular implies that their gradient sets are identical. Furthermore, combining the assumptions $x_d=0,\bx\neq\mathbf{0}$ reveals that $x_1,\dots,x_{d-1}$ are not all zeroes. Consequently, we get
    \[
    \partial f(\bx)\subseteq
    \left\{
    t\cdot\mathbf{e}_d+\frac{1}{4}\overline{\bx}
    \middle|
    ~t\in[-2,2]
    \right\}~.
    \]
    For any $\bg\in\partial f(\bx)$ corresponding to some $t$, we get
    \[
    \|\bg\|\geq\langle\bg,\overline{\bx}\rangle=\frac{1}{4}>\frac{1}{4\sqrt{2}}~.
    \]
    
    \item $x_{d}=0$, $\bx\notin\{\bf{0},-\bw\}$, $\langle\overline{\bw},\overline{\bx+\bw}\rangle>\frac{1}{2}$. Note that 
    \[
    \langle\overline{\bw},\overline{\bx+\bw}\rangle>\frac{1}{2}
    \implies\langle\overline{\bw},\bx+\bw\rangle-\frac{1}{2}\|\bx+\bw\|>0~,
    \]
    and that the set $\left\{\bx\middle|~\langle\overline{\bw},\overline{\bx+\bw}\rangle>\frac{1}{2}\right\}\setminus\left\{\mathbf{0},-\bw\right\}$ is an open set in $\reals^d$. Thus for every such point, the function $\bx\mapsto f(\bx)$ is locally identical to 
    \[
    \bx\mapsto h(x_d+x^*)+\frac{1}{4}\sqrt{\sum_{i=1}^{d-1}x_i^{2}}-\langle\overline{\bw},\bx+\bw\rangle+\frac{1}{2}\|\bx+\bw\|~,
    \]
    which in particular implies that their gradient sets are identical. Furthermore, combining the assumptions $x_d=0,\bx\neq\mathbf{0}$ reveals that $x_1,\dots,x_{d-1}$ are not all zeroes. Consequently, we get
    \[
    \partial f(\bx)\subseteq
    \left\{
    t\cdot\mathbf{e}_d+\frac{1}{4}\overline{\bx}-\overline{\bw}+\frac{1}{2}\left(\overline{\bx+\bw}\right)
    \middle|
    ~t\in[-2,2]
    \right\}~.
    \]
    For any $\bg\in\partial f(\bx)$ corresponding to some $t$, using the fact that $w_d=0$ we get
    \begin{align*}
    \|\bg\|&\geq\langle\bg,-\overline{\bw}\rangle
    =\frac{1}{4}\left\langle\overline{\bx},-\overline{\bw}\right\rangle
    +\left\langle-\overline{\bw},-\overline{\bw}\right\rangle
    +\frac{1}{2}\left\langle\overline{\bx+\bw},-\overline{\bw}\right\rangle\\
    &\geq-\frac{1}{4}+1-\frac{1}{2}>\frac{1}{4\sqrt{2}}~.
    \end{align*}
    
    \item $x_{d}=0$, $\bx\notin\{\bf{0},-\bw\}$, $\langle\overline{\bw},\overline{\bx+\bw}\rangle=\frac{1}{2}$. In this case we have
    \begin{align}
    &\partial f(\bx)\subseteq
    \left\{
    t\cdot\mathbf{e}_d+\frac{1}{4}\overline{\bx}-s\left(\overline{\bw}-\frac{1}{2}(\overline{\bx+\bw})\right)
    \middle|
    ~t\in[-2,2],~s\in[0,1]
    \right\} \label{eq: g condition}\\
    &=\left\{
    \left(\frac{1}{4\|\bx\|}+\frac{s}{2\|\bx+\bw\|}\right)\bx+\left(\frac{s}{2\|\bx+\bw\|}-\frac{s}{\|\bw\|}\right)\bw+t\cdot\mathbf{e}_d
    \middle|
    ~t\in[-2,2],~s\in[0,1]
    \right\}~. \nonumber
    \end{align}
    Denote $\bx=\bx_{\mid}+\bx_{\perp}$ where $\bx_{\perp}=(I-\bar{\bw}\bar{\bw}^T)\bx$ is the orthogonal projection of $\bx$ onto $\mathrm{span}(\bw)^{\perp}$, and $\bx_{\mid}\in\mathrm{span}(\bw)$.
    For any $\bg\in\partial f(\bx)$ corresponding to some $t,s$, using the fact that $x_d=w_d=0$ we get for some scalar $\alpha$:
    \begin{align*}
        \|\bg\|
        &\geq\left\|\left(\frac{1}{4\|\bx\|}+\frac{s}{2\|\bx+\bw\|}\right)\bx+\left(\frac{s}{2\|\bx+\bw\|}-\frac{s}{\|\bw\|}\right)\bw\right\| \\
        &=\left\|\left(\frac{1}{4\|\bx\|}+\frac{s}{2\|\bx+\bw\|}\right)\bx_{\perp}+\alpha\cdot\bw\right\| \\
        &\geq\left(\frac{1}{4\|\bx\|}+\frac{s}{2\|\bx+\bw\|}\right)\left\|\bx_{\perp}\right\| \\
        &\geq\frac{1}{4\|\bx\|}\cdot\|\bx_{\perp}\|~.
    \end{align*}
    Since $(I-\bar{\bw}\bar{\bw}^T)$ is an orthogonal projection, in particular symmetric, we also have
		\[
		\norm{\bx_{\perp}}^2
		=\langle\bx,(I-\bar{\bw}\bar{\bw}^T)^2 \bx\rangle
		=\langle\bx,(I-\bar{\bw}\bar{\bw}^T) \bx\rangle
		=\norm{\bx}^2-\langle\overline{\bw},\bx\rangle^2
		=\norm{\bx}^2(1-\langle\overline{\bw},\overline{\bx}\rangle^2)~.
		\]
		Plugging into the above, it follows that $\|\bg\|$ is at least $\frac{1}{4}\sqrt{1-\langle\overline{\bw},\overline{\bx}\rangle^2}$. Assuming that there exists such $\bg\in\partial f(\bx)$ with norm at most $\epsilon$, it follows that
		\begin{equation} \label{eq: wx}
		\frac{1}{4}\sqrt{1-\langle\overline{\bw},\overline{\bx}\rangle^2}
		\leq\epsilon~.
		\end{equation}
		However, we will show that for any $\epsilon<\frac{1}{4\sqrt{2}}$, we must arrive at a contradiction. To that end, let us consider two cases:
		\begin{itemize}
			\item If $\langle\overline{\bw},\overline{\bx}\rangle>0$, then by rearranging \eqref{eq: wx}, we have $\langle\overline{\bw},\overline{\bx}\rangle\geq\sqrt{1-16\epsilon^2}$. Hence,
			\[
			\langle\overline{\bw},\bx+\bw\rangle
			\geq\norm{\bx}\sqrt{1-16\epsilon^2}+\norm{\bw}
			\geq(\norm{\bx}+\norm{\bw})\sqrt{1-16\epsilon^2}
			\geq\norm{\bx+\bw}\sqrt{1-16\epsilon^2}~.
			\]
			However, dividing both sides by $\norm{\bx+\bw}$, we get that $\langle\overline{\bw},\overline{\bx+\bw}\rangle\geq\sqrt{1-16\epsilon^2}$. If $\epsilon<\frac{1}{4\sqrt{2}}$, we get that $\langle\overline{\bw},\overline{\bx+\bw}\rangle>\frac{1}{2}$, contradicting our assumption on $\bx$.
			
			\item If $\langle\overline{\bw},\overline{\bx}\rangle\leq0$, then by \eqref{eq: wx}, we must have $\langle\overline{\bw},\overline{\bx}\rangle\leq-\sqrt{1-16\epsilon^2}$. But then, by recalling that $w_d=0$, we use \eqref{eq: g condition} and our assumption that $\langle\overline{\bw},\overline{\bx+\bw}\rangle=\frac{1}{2}$ in order to obtain
			\begin{align*}
			 -\norm{\bg}
			&\leq\langle\overline{\bw},\bg\rangle
			=\frac{1}{4}\langle\overline{\bw},\overline{\bx}\rangle-s\left(1-\frac{1}{2}\cdot\frac{1}{2}\right)
			\leq-\frac{1}{4}\sqrt{1-16\epsilon^2}-\frac{3}{4}s
			\\&\leq-\frac{1}{4}\sqrt{1-16\epsilon^2}~.  
			\end{align*}
			This implies that $\frac{1}{4}\sqrt{1-16\epsilon^2}\leq\norm{\bg}\leq\epsilon$, which does not hold for any $\epsilon<\frac{1}{4\sqrt{2}}$.
		\end{itemize}
\end{itemize}
\end{proof}

Finally, given some nonzero $\bw\in\reals^d$ such that $w_d=0$, we are ready to consider the function
\begin{align*}
F_{\bw}(x_1,\dots,x_d):=&
\max\left\{-1,f_{\bw}(\bx-\bx^*)\right\} \\
=&\max\left\{-1,
h(x_d)+\frac{1}{4}\sqrt{\sum_{i=1}^{d-1}x_i^{2}}-\left[\langle\overline{\bw},\bx-\bx^*+\bw\rangle-\frac{1}{2}\|\bx-\bx^*+\bw\|\right]_{+}\right\}
~.
\end{align*}

\begin{lemma} \label{lem: nonsmooth finale}
The following hold:
\begin{itemize}
    \item $F_{\bw}(\cdot)$ is $\frac{15}{4}$-Lipschitz, $F_{\bw}(\mathbf{0})-\inf_{\bx}F_{\bw}(\bx)\leq2$ and $\inf\left\{\norm{\bx}~|~\partial F_{\bw}(\bx)=\{\mathbf{0}\}\right\}\leq 13$.
    \item Any $\epsilon$-stationary point $\bx$ for $\epsilon<\frac{1}{4\sqrt{2}}$ satisfies $F_{\bw}(\bx)=-1$.
    \item There exists a choice of $\bw$, such that if we run $\Acal$ on $F_{\bw}(\cdot)$, then with probability at least $1-2T\exp(-d/36)$ the algorithm's iterates $\bx_1^{F_{\bw}},\dots,\bx_T^{F_{\bw}}$ satisfy $\min_{t\in[T]}F_{\bw}(\bx_t^{F_{\bw}})>0$.
\end{itemize}
\end{lemma}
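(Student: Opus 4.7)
The first two bullets reduce to direct calculations combined with \lemref{lem: f_w properties}. Since $F_{\bw} = \max\{-1, f_{\bw}(\cdot-\bx^*)\}$ and taking a max against a constant is a $1$-Lipschitz operation, $F_{\bw}$ inherits the $\tfrac{15}{4}$-Lipschitz modulus from $f_{\bw}$. The projection-norm term in $f_{\bw}(-\bx^*)$ vanishes and its ReLU term is nonnegative, so $F_{\bw}(\mathbf{0}) = \max\{-1, f_{\bw}(-\bx^*)\} \leq h(0) = 1$; combined with $\inf F_{\bw} \geq -1$ this yields the gap of $2$. For the stationary-point bound, a direct calculation along the ray $\bx = t\bw+\bx^*$ (using $w_d=0$ and $\bx^* = x^*\mathbf{e}_d$) gives $f_{\bw}(t\bw) = -\tfrac{t\|\bw\|}{4}-\tfrac{\|\bw\|}{2}$, which is $<-1$ once $t\|\bw\| > 4-2\|\bw\|$. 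By continuity $F_{\bw} \equiv -1$ on an open neighborhood of such $\bx$, whence $\partial F_{\bw}(\bx) = \{\mathbf{0}\}$; picking $t\|\bw\| = 5$ gives $\|\bx\| \leq 5 + \|\bx^*\| \leq 6$, well below $13$.

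For the second bullet, suppose $F_{\bw}(\bx) > -1$. By continuity of $f_{\bw}$, the strict inequality $f_{\bw}(\bx'-\bx^*) > -1$ persists on an open neighborhood of $\bx$, on which $F_{\bw}$ coincides with $\bx'\mapsto f_{\bw}(\bx'-\bx^*)$; hence their generalized gradients at $\bx$ agree. \lemref{lem: f_w properties} then precludes $\bx$ from being an $\epsilon$-stationary point of $F_{\bw}$ for $\epsilon<\tfrac{1}{4\sqrt{2}}$, which is the contrapositive of the claim.

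For the probabilistic bullet, the plan is to draw $\bw$ uniformly on $r\Scal^{d-2}$ embedded in $\{\bx:x_d=0\}$, for a small radius $r$ (e.g.\ $r = \rho/9$ with $\rho = \rho(T,Te^{-d/36})$ the quantity from \lemref{lem: h provider}), and couple the executions of $\Acal$ on $F_{\bw}$ and on $\bar{f}_h$ via shared internal randomness. The key structural observation is that off the open ``channel'' $\Ccal_{\bw} := \{\bx : \langle\overline{\bw}, \bx-\bx^*+\bw\rangle > \tfrac{1}{2}\|\bx-\bx^*+\bw\|\}$, the ReLU term in $F_{\bw}$ vanishes and (since $\bar{f}_h \geq 0 > -1$) the outer max is inactive, so $F_{\bw} \equiv \bar{f}_h$ pointwise on the complement of $\Ccal_{\bw}$. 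Any query strictly outside $\Ccal_{\bw}$ therefore sees $F_{\bw}$ and $\bar{f}_h$ agreeing on an open neighborhood, making the local-oracle responses identical; the two runs of $\Acal$ remain coupled until (and only until) the first query enters $\Ccal_{\bw}$.

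The argument concludes by bounding two bad events. By \lemref{lem: h provider} applied with $\delta = Te^{-d/36}$, with probability at least $1-Te^{-d/36}$ over $\Acal$'s randomness the iterates of $\Acal$ on $\bar{f}_h$ satisfy $\|\bx_t-\bx^*\|\geq\rho$ for all $t\in[T]$. Conditioning on this event renders each $\bx_t$ a fixed vector independent of $\bw$; writing $\by = \bx_t-\bx^*$ and letting $\by'$ be its projection onto the first $d-1$ coordinates, the channel condition combined with the triangle inequality and $\|\bw\|=r\leq\|\by\|/9$ forces $\langle\overline{\bw},\overline{\by'}\rangle \geq 1/3$, an event of probability at most $e^{-(d-1)/18} \leq e^{-d/36}$ (for $d\geq 2$) by standard concentration of a uniform point on $\Scal^{d-2}$. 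A union bound over the $T$ iterates and combining with the previous estimate yields successful coupling with probability at least $1-2Te^{-d/36}$ over the joint randomness of $\Acal$ and $\bw$; averaging over $\bw$ then selects a specific $\bw$ satisfying this bound over $\Acal$'s randomness alone. Under the good event, $F_{\bw}(\bx_t) = \bar{f}_h(\bx_t) \geq \rho/4 > 0$, using the lower bound $|h'|\geq 1$ away from $x^*$ together with the projection term and $\|\bx_t-\bx^*\|\geq\rho$. The main technical obstacle is the concentration step, where the radius $r$ and the constants in the sphere concentration must be tuned so that the two error contributions fit inside the single budget $e^{-d/36}$ for every $d\geq 2$.
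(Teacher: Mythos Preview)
Your approach matches the paper's essentially line for line: same Lipschitz/gap argument, same contrapositive via \lemref{lem: f_w properties} for the second bullet, and the same coupling-plus-concentration scheme for the third (with $\bw$ uniform on a small sphere in $\{x_d=0\}$, comparison of $F_{\bw}$ to $\bar f_h$ off the channel, Lemma~\ref{lem: h provider} for one error term, sphere concentration for the other, and a probabilistic-method selection of $\bw$).

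One genuine arithmetic slip in the first bullet: in evaluating $f_{\bw}(t\bw)$ you dropped the leading $h$-term. Since $(t\bw)_d=0$, the first summand is $h(0+x^*)=h(x^*)\in[0,1]$, not $0$; the correct expression is $f_{\bw}(t\bw)=h(x^*)-\tfrac{t\|\bw\|}{4}-\tfrac{\|\bw\|}{2}$. Your threshold $t\|\bw\|>4-2\|\bw\|$ is therefore too weak, and the choice $t\|\bw\|=5$ gives only $f_{\bw}(t\bw)\leq -1/4$, not $<-1$. Taking $t\|\bw\|=9$ (or the paper's $12$) fixes this and still yields $\|\bx\|\leq 10\leq 13$, so the conclusion survives. (Incidentally, the paper itself writes $h(0)$ rather than $h(x^*)$ at the analogous step, but its choice of $12$ absorbs the discrepancy.) In the third bullet, be explicit that the contrapositive of your channel implication, run with the \emph{closure} of $\Ccal_{\bw}$, places the iterates in the open interior of the complement --- this is what you need for the local-oracle responses to match on a neighborhood, and it follows from the strict inequality $\langle\overline{\bw},\overline{\by'}\rangle<1/3$ given by the concentration bound.
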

\begin{proof}
First, recall that $f_{\bw}(\cdot)$ is $\frac{15}{4}$-Lipschitz by \lemref{lem: f_w properties}. Combining this with the fact that $\bx\mapsto\bx-\bx^*,~z\mapsto\max\{-1,z\}$ are both $1$-Lipschitz yields the desired Lipschitz bound. Moreover, we see that $\inf_{\bx}F_{\bw}(\bx)\geq-1$, and by definition of $F_{\bw}(\cdot)$ and \lemref{lem: h provider}: $F_{\bw}(\mathbf{0})\leq h(0)=1$. Combining the two observations gives
\[
F_{\bw}(\mathbf{0})-\inf_{\bx}F_{\bw}(\bx)
\leq 1+1=2~.
\]
For the remaining claim in the first bullet, consider $\bv=12\overline{\bw}+\bx^*$. By \lemref{lem: h provider} we have $\norm{\bv}\leq12+1=13$, thus it is enough to show that $\bv$ is a stationary point of $F_{\bw}$. In particular, it is enough to show that $f_{\bw}(\bv-\bx^*)<-1$, since by the continuity of $f_{\bw}$ this will imply that $F_{\bw}\equiv-1$ in a neighborhood of $\bv$. Indeed, using the facts that $\bv-\bx^*=12\overline{\bw},~w_d=0$ we get
\begin{align*}
    f_{\bw}(\bv-\bx^*)
    &=h(x^*)+\frac{1}{4}\cdot12\norm{\overline{\bw}}-\left[\langle\overline{\bw},12\overline{\bw}+\bw\rangle-\frac{1}{2}\|12\overline{\bw}+\bw\|\right]_{+}
    \\
    &<h(0)+3-\frac{1}{2}\norm{12\overline{\bw}+\bw}
    <1+3-\frac{1}{2}\cdot12<-1~.
\end{align*}
As to the second bullet, suppose $\bx$ is an $\epsilon$-stationary point for some $\epsilon<\frac{1}{4\sqrt{2}}$. Namely, there exists $\bg\in\partial F_{\bw}(\bx)$ such that $\norm{\bg}\leq\epsilon$. Assume by contradiction that $F_{\bw}(\bx)>-1$. Since the set $\{\by|~F_{\bw}(\by)>-1\}$ is an open set, it follows that for all $\by$ in some neighborhood of $\bx$: $F_{\bw}(\by)>-1$. Hence, for all $\by$ in some neighborhood of $\bx$: $F_{\bw}(\by)=f_{\bw}(\by-\bx^*)$, which in particular implies that $\partial F_{\bw}(\bx)=\partial f_{\bw}(\bx-\bx^*)$. We conclude that $\bg\in\partial f_{\bw}(\bx-\bx^*)$ and satisfies $\norm{\bg}<\frac{1}{4\sqrt{2}}$. Thus $(\bx-\bx^*)$ is an $\epsilon$-stationary point of $f_{\bw}(\cdot)$ for some $\epsilon<\frac{1}{4\sqrt{2}}$, which is a contradiction to \lemref{lem: f_w properties}.

In order to prove the third bullet, we start by noticing that
\[
\bx\in \left\{\bx:\langle\overline{\bw},\overline{\bx-\bx^*+\bw}\rangle\leq\frac{1}{2}\right\}
\cup \{\bx^*-\bw\}
\implies 
\langle\overline{\bw},\bx-\bx^*+\bw\rangle-\frac{1}{2}\|\bx-\bx^*+\bw\|\leq0~,
\]
from which it follows that
\begin{equation} \label{eq: x angle condition}
\forall \bx\in \left\{\bx:\langle\overline{\bw},\overline{\bx-\bx^*+\bw}\rangle\leq\frac{1}{2}\right\}
\cup \{\bx^*-\bw\}
~:~
F_{\bw}(\bx)=\bar{f}(\bx):=h(x_d)+\frac{1}{4}\sqrt{\sum_{i=1}^{d-1}x_i^2}~.
\end{equation}
Indeed, for any such $\bx$ the ReLU term in the definition of $F_{\bw}(\cdot)$ vanishes, and the remaining function (which is non-negative) is greater than $-1$. We continue by showing that \eqref{eq: x angle condition} holds over a set of more convenient form. In order to do that, fix some $\bx$ such that $\langle\overline{\bw},\overline{\bx-\bx^*+\bw}\rangle>\frac{1}{2}$ (i.e. the \emph{opposite} condition). Multiplying by $\norm{\bx-\bx^*+\bw}$ gives
\[
\langle\overline{\bw},\bx-\bx^*\rangle+\norm{\bw}=
\langle\overline{\bw},\bx-\bx^*+\bw\rangle
>\frac{1}{2}\norm{\bx-\bx^*+\bw}
\geq \frac{1}{2}\left(\norm{\bx-\bx^*}-\norm{\bw}\right)~.
\]
For $\bx=\bx^*$ the inequality above is trivially satisfied. For $\bx\neq \bx^*$, dividing by $\norm{\bx-\bx^*}$ and rearranging yields
\[
\langle\overline{\bw},\overline{\bx-\bx^*}\rangle>\frac{1}{2}-\frac{3\norm{\bw}}{2\norm{\bx-\bx^*}}~.
\]
The contrapositive allows us to deduce that any $\bx$ which does \emph{not} satisfy the condition above belongs to $\{\bx:\langle\overline{\bw},\overline{\bx-\bx^*+\bw}\rangle\leq\frac{1}{2}\}\cup \{\bx^*-\bw\}$, so we get by \eqref{eq: x angle condition}:
\begin{equation} \label{eq: x angle simplified}
\forall \bx\neq \bx^*:
\langle\overline{\bw},\overline{\bx-\bx^*}\rangle\leq\frac{1}{2}-\frac{3\norm{\bw}}{2\norm{\bx-\bx^*}}
\implies
F_{\bw}(\bx)=\bar{f}(\bx):=h(x_d)+\frac{1}{4}\sqrt{\sum_{i=1}^{d-1}x_i^2}~. 
\end{equation}
With this equation in hand, we turn to describe how $\bw$ should be set in order to establish the third bullet. Consider a random vector $\bw\in\reals^d$ which is distributed as follows: 
\begin{equation} \label{eq: w distribution}
(w_1,\dots,w_{d-1})\sim\mathrm{Unif}\left(\frac{\rho}{99}\cdot \SSS^{d-2}\right),\ \Pr[w_d=0]=1~,
\end{equation}
where $\frac{\rho}{99}\cdot \SSS^{d-2}:=\{(y_1,\dots,y_{d-1})|\sum_{i=1}^{d-1}y_i^2=\frac{\rho}{99}\}$ is the $(d-2)$-dimensional sphere of radius $\frac{\rho}{99}$. Note that $\norm{\bw}=\frac{\rho}{99}$, which by plugging into \eqref{eq: x angle simplified} gives
\begin{equation} \label{eq: x angle simplified 2}
\forall \bx\neq \bx^*:
\langle\overline{\bw},\overline{\bx-\bx^*}\rangle\leq\frac{1}{2}-\frac{\rho}{66\norm{\bx-\bx^*}}
\implies
F_{\bw}(\bx)=\bar{f}(\bx):=h(x_d)+\frac{1}{4}\sqrt{\sum_{i=1}^{d-1}x_i^2}~. 
\end{equation}
Let $\bx_1^{\bar{f}},\dots,\bx_{T}^{\bar{f}}$ be the (possibly random) iterates produced by $\Acal$ when ran on $\bar{f}(\cdot)$. Note that if
\begin{equation} \label{eq: x_t^barf event}
    \left(\min_{t\in[T]}\norm{\bx_{t}^{\bar{f}}-\bx^*}\geq \rho>0\right)
    \land 
    \left(
    \max_{t\in[T]}\langle\overline{\bw},\overline{(\bx_t^{\bar{f}}-\bx^*)}\rangle
	<\frac{1}{3}
    \right)
\end{equation}
then for all $t\in[T]:$
    \[
	\langle\overline{\bw},\overline{(\bx_t^{\bar{f}}-\bx^*)}\rangle
	<\frac{1}{3}
	<\frac{1}{2}-\frac{\rho}{66\rho}
	\leq\frac{1}{2}-\frac{\rho}{66\norm{\bx_{t}^{\bar{f}}-\bx^*}}~,
	\]
as well as $\bx_t^{\bar{f}}\neq\bx^*$. Thus, by \eqref{eq: x angle simplified 2}, this means that \eqref{eq: x_t^barf event} implies that $F_{\bw}(\bx_t^{\bar{f}})=\bar{f}(\bx_t^{\bar{f}})$ for all $t\in[T]$. Moreover, using the fact that $\bx_t^{\bar{f}}$ is bounded away from $\bx^*$, it is easily verified that the condition in \eqref{eq: x angle simplified 2} also holds for
all $\bx$ in some neighborhood of $\bx_t^{\bar{f}}$, so actually $F_{\bw}(\cdot)$ is identical to $\bar{f}(\cdot)$ on these neighborhoods, implying the same local oracle response. Hence, assuming the event in \eqref{eq: x_t^barf event} occurs, if we run the algorithm on $F_{\bw}(\cdot)$ rather than $\bar{f}(\cdot)$, then the produced iterates $\bx_{1}^{F_{\bw}},\dots,\bx_{T}^{F_{\bw}}$ are identical to $\bx_{1}^{\bar{f}},\dots,\bx_{T}^{\bar{f}}$. That being the case, we would get
	\[
	\min_{t\in[T]}F_{\bw}(\bx_{t}^{F_{\bw}})
	=\min_{t\in[T]}F_{\bw}(\bx_{t}^{\bar{f}})
	=\min_{t\in[T]}\bar{f}(\bx_{t}^{\bar{f}})
	>0~,
	\]
	where the last inequality utilizes the fact that $\norm{\bx_t^{\bar{f}}-\bx^*}>0$. Overall we see that
	\[
	\left(\min_{t\in[T]}\norm{\bx_{t}^{\bar{f}}-\bx^*}\geq \rho\right)
	\land 
    \left(
    \max_{t\in[T]}\langle\overline{\bw},\overline{(\bx_t^{\bar{f}}-\bx^*)}\rangle
	<\frac{1}{3}
    \right)
	\implies
	\min_{t\in[T]}F_{\bw}(\bx_{t}^{F_{\bw}})>0~.
	\]
Thus, in order to finish the proof, it is enough to show that there exists $\bw$, such that
\begin{equation} \label{eq: enough to show}
\Pr_{\Acal}\left[
\left(\min_{t\in[T]}\norm{\bx_{t}^{\bar{f}}-\bx^*}\geq \rho\right)
\land 
\left(
\max_{t\in[T]}\langle\overline{\bw},\overline{(\bx_t^{\bar{f}}-\bx^*)}\rangle
<\frac{1}{3}
\right)
\right]\geq1-2T\exp(-d/36)~.
\end{equation}
In order to prove this claim, we observe that:
\begin{enumerate}
    \item By \lemref{lem: h provider} we know that
    $\Pr_{\Acal}[\min_{t\in[T]}\norm{\bx_t^{\bar{f}}-\bx^*}\geq \rho]\geq1-\delta=1-T\exp(-d/36)$.
    \item If we fix some vectors $\bu_1,\ldots,\bu_T$ in $\reals^{d-1}$ such that $\forall t:\norm{\bu_{t}}\leq1$, and pick a unit vector $\bu\in\reals^{d-1}$ uniformly at random, then by a union bound and a standard concentration of measure on the sphere argument (e.g., \citealp[Lemma 2.2]{ball1997elementary}), $\Pr(\max_t \langle\bu,\bu_t\rangle\geq\alpha)\leq T\cdot\Pr(\langle\bu,\bu_1\rangle\geq\alpha)\leq T\exp(-(d-1)\alpha^2/2)$. For any realization of $\Acal$'s randomness such that such that for all $t\in[T]:\norm{\bx_t^{\bar{f}}-\bx^*}\geq \rho$, by setting
    \[
    \alpha=1/3,\ \ \bu=\overline{(w_1,\dots,w_{d-1})},\ \  \bu_{t}=\frac{1}{\norm{\bx_t^{\bar{f}}-\bx^*}}((\bx_t^{\bar{f}}-\bx^*)_1,\dots,(\bx_t^{\bar{f}}-\bx^*)_{d-1})~,
    \]
    while noticing that $\langle\bu,\bu_t\rangle=\langle\overline{\bw},\overline{(\bx_t^{\bar{f}}-\bx^*)}\rangle$ since $w_{d}=0$, we get
    \[
    \Pr_{\bw}\left[\max_{t\in[T]} \langle\overline{\bw},\overline{(\bx_t^{\bar{f}}-\bx^*)}\rangle\geq 1/3\right]\leq T\exp(-d/36)
    ~.\]
\end{enumerate}
Combining the two observations in a formal manner results in
\begin{equation} \label{eq: nested prob bound}
    \Pr_{\Acal}\left[\Pr_{\bw}\left[
    E_{\Acal,\bw}
    \middle|\Acal\right]\geq1-T\exp(-d/36)\right]
    \geq1-T\exp(-d/36)~,
\end{equation}
where
\[
    E_{\Acal,\bw}:=\left(\min_{t\in[T]}\norm{\bx_{t}^{\bar{f}}-\bx^*}    \geq \rho\right)
    \land 
    \left(
    \max_{t\in[T]}\langle\overline{\bw},\overline{(\bx_t^{\bar{f}}-\bx^*)}\rangle
    <\frac{1}{3}
    \right)~.
\]
Finally, using the law of total expectation and \eqref{eq: nested prob bound} we get
\begin{align*}
    &\Pr_{\Acal,\bw}[E_{\Acal,\bw}]
    =\E_{\Acal}[\Pr_{\bw}[E_{\Acal,\bw}|\Acal]]
    \\
     & \geq\E_{\Acal}\left[\Pr_{\bw}\left[E_{\Acal,\bw}|\Acal:\Pr_{\bw}[E_{\Acal,\bw}|\Acal]\geq1-T\exp(-d/36)\right]
    \cdot\Pr_{\Acal}\left[\Pr_{\bw}[E_{\Acal,\bw}|\Acal]\geq1-T\exp(-d/36)\right]\right]
    \\
    &\geq\E_{\Acal}\left[(1-T\exp(-d/36)\cdot(1-T\exp(-d/36)\right]
    \\
    &\geq(1-T\exp(-d/36))^2
    \\
    &\geq1-2T\exp(-d/36)~.
\end{align*}
Consequently, by the probabilistic method, there exists some fixed choice of $\bw$ such that
\[
\Pr_{\Acal}[E_{\Acal,\bw}]\geq1-2T\exp(-d/36)~,
\]
which is exactly \eqref{eq: enough to show}, finishing the proof.
\end{proof}

The theorem is an immediate corollary of the previous lemma: With the specified high probability, $\min_t F_{\bw}(\bx_t)>0$, even though all $\epsilon$-stationary points (for any $\epsilon<\frac{1}{4\sqrt{2}}$) have a value of $-1$. Since $F_{\bw}$ is also $\frac{15}{4}$-Lipschitz, we get that the distance of any $\bx_t$ from an $\epsilon$-stationary point must be at least $\frac{0-(-1)}{\frac{15}{4}}=\frac{4}{15}$. Simplifying the numerical terms by choosing a large enough constant $C$ and a small enough constant $c$, and relabeling $F_{\bw}$ as $f$, the theorem follows.

\subsection{Proof of \thmref{thm:smoothingmain}}

We start the proof by showing that without loss of generality we can impose certain assumptions on the parameters of interest. First, if $\epsilon\geq1$ then the right hand side of \eqref{eq:theorem inequality} is negative for any $c_2<1$, which makes the theorem trivial. Consequently, we can assume $\epsilon<1$. Using \lemref{lemma: L>1/eps} in Appendix \ref{app:technical lemmas}, this also implies that $L\geq\frac{1}{8}$ since otherwise an $L$-smooth $\epsilon$-approximation does not exist in the first place in case of $1$-Lipschitz function $\bx\mapsto |x_1|$ (in particular, no such smoother exists).
Therefore, if $\sqrt{\log\left(\left(M+1\right)\left(T+1\right)\right)}\geq\frac{\sqrt{d}}{32r}$ then
\[
L\sqrt{\log\left(\left(M+1\right)\left(T+1\right)\right)}\geq\frac{1}{8}\cdot\frac{\sqrt{d}}{32r}>\frac{1}{256}\cdot\frac{\sqrt{d}}{r}\left(1-\epsilon\right)~,
\]
which proves the theorem. Thus we can assume throughout the proof that 
\begin{equation} \label{eq: 1/16B...reduction}
    \sqrt{\log\left(\left(M+1\right)\left(T+1\right)\right)}<\frac{\sqrt{d}}{32r}\implies\frac{1}{16r}\sqrt{\frac{d}{\log\left(\left(M+1\right)\left(T+1\right)\right)}}>2~.
\end{equation}

Our strategy is to define a distribution over a family of ``hard'' 1-Lipschitz functions over $\reals^d$, for which we will show that \eqref{eq:theorem inequality} must hold for some function supported by this distribution. 
Before we turn to do so, we will show that when a smoother which satisfies $\mathsf{TICF}$ acts on a constant function, it returns a gradient estimate of small norm. This will be crucial later on, since we will construct a function which looks ``locally constant'' at many points of interest, thus deceiving the smoother.

\begin{lemma} \label{lemma: constant func norm<eps}
    If $\Scal$ is an $(L,\epsilon,T,M,r)$-smoother satisfying $\mathsf{TICF}$, then for any constant function $f$ and any $\bx\in\reals^d:\ \left\|\E\left[\Scal\left(f,\bx\right)\right]\right\|\leq\epsilon$.
\end{lemma}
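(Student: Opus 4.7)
The plan is to combine the $\mathsf{TICF}$ invariance with the smoother guarantee \eqref{eq:M almost-surely condition} and the implicit boundedness that comes from $\tilde{f}$ approximating a constant function. Let $c$ denote the value of $f$ on $\reals^d$.

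First, I would apply Definition~\ref{def:invariant} with $g=f$ (both being the \emph{same} constant function). The output identity in $\mathsf{TICF}$ gives
\[
\mathcal{A}^{(out)}\!\left(\xi,\bx,\OO_{f}(\bx_{1}),\dots,\OO_{f}(\bx_{T})\right)=\mathcal{A}^{(out)}\!\left(\xi,{\bf 0},\OO_{f}(\bx_{1}-\bx),\dots,\OO_{f}(\bx_{T}-\bx)\right),
\]
and by the query recursion the shifted arguments on the right are precisely the queries of $\mathcal{A}$ launched from $\mathbf{0}$. Hence $\mathcal{A}(f,\bx)=\mathcal{A}(f,\mathbf{0})$ for every realization of $\xi$, so $\bv:=\E[\mathcal{A}(f,\bx)]=\E[\mathcal{A}(f,\mathbf{0})]$ is \emph{independent} of the base point $\bx$.

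Next, I would invoke \eqref{eq:M almost-surely condition} at every $\bx$ simultaneously to obtain $\norm{\bv-\nabla\tilde{f}(\bx)}\leq\epsilon$ for all $\bx\in\reals^d$; that is, $\nabla\tilde{f}$ is uniformly within $\epsilon$ of the constant vector $\bv$ throughout $\reals^d$. I would then assume for contradiction that $\norm{\bv}>\epsilon$ and set $\bar{\bv}=\bv/\norm{\bv}$. Since $\inner{\nabla\tilde{f}(s\bar{\bv}),\bar{\bv}}\geq\norm{\bv}-\epsilon>0$, integrating along the ray yields
\[
\tilde{f}(t\bar{\bv})-\tilde{f}(\mathbf{0})=\int_{0}^{t}\inner{\nabla\tilde{f}(s\bar{\bv}),\bar{\bv}}\,ds\;\geq\;t(\norm{\bv}-\epsilon),
\]
which is unbounded in $t$. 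On the other hand, $\norm{f-\tilde{f}}_{\infty}\leq\epsilon$ forces $|\tilde{f}(t\bar{\bv})-\tilde{f}(\mathbf{0})|\leq 2\epsilon$ uniformly in $t$, contradicting the growth bound. Hence $\norm{\bv}\leq\epsilon$, which is the claim.

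The only delicate point is reading the $\mathsf{TICF}$ output identity correctly: unlike the query identity, the output equation carries no $+\bx$ shift, and it is precisely this asymmetry that promotes the algorithm's expected output from being merely shift-covariant to being translation-\emph{independent}. Once that is in place, the rest is a routine ``almost-constant gradient plus bounded function forces gradient to be small'' fundamental-theorem-of-calculus argument.
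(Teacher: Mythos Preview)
Your proof is correct and follows essentially the same route as the paper: use $\mathsf{TICF}$ to show that $\bv=\E[\mathcal{A}(f,\bx)]$ is independent of $\bx$, combine \eqref{eq:M almost-surely condition} with integration along the direction of $\bv$ to force linear growth of $\tilde{f}$, and contradict the $2\epsilon$ oscillation bound coming from $\|f-\tilde{f}\|_\infty\le\epsilon$ with $f$ constant. The only cosmetic difference is that the paper integrates along $\bv$ itself (deriving $2\epsilon\ge t\|\bv\|(\|\bv\|-\epsilon)$ for all $t\ge0$ directly) rather than along $\bar{\bv}$ via contradiction, and it does not spell out the $\mathsf{TICF}$ output-identity step as explicitly as you do.
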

\begin{proof}
    Denote $\bv:=\E\left[\Scal\left(f,\bx\right)\right]$, and note that by the $\mathsf{TICF}$ property $\bv$ does not depend on $\bx$. Let $\tilde{f}$ be the $\epsilon$-approximation of $f$ implicitly computed by $\Scal$, then by the definition of a smoothing algorithm, we have for all $\bx\in\reals^d$:
    \begin{gather*}
        \left\|\bv-\nabla\tilde{f}\left(\bx\right)\right\|\leq\epsilon \\
        \implies 
        \|\bv\|^2-\left\langle\nabla\tilde{f}\left(\bx\right),\bv\right\rangle=
        \left\langle\bv-\nabla\tilde{f}\left(\bx\right),\bv\right\rangle
        \leq\left\|\bv-\nabla\tilde{f}\left(\bx\right)\right\|\cdot\left\|\bv\right\|
        \leq\epsilon\|\bv\| \\
        \implies \left\langle\nabla\tilde{f}\left(\bx\right),\bv\right\rangle\geq\|\bv\|^2-\epsilon\|\bv\|~.
    \end{gather*}
    Define the one dimensional projected function $\tilde{f}_{\bv}(t):=\tilde{f}(t\cdot\bv)$. Then for all $t\geq 0$,
    \begin{align} \label{eq:ftilde(t)-ftilde(0)}
    \tilde{f}_{\bv}\left(t\right)-\tilde{f}_{\bv}\left(0\right)
    &=\int_{0}^{t}\tilde{f}_{\bv}'(z)dz
    =\int_{0}^{t}\left\langle\nabla\tilde{f}\left(z\cdot\bv\right),\bv\right\rangle dz \nonumber \\
    &\geq\int_{0}^{t}\left(\|\bv\|^2-\epsilon\|\bv\|\right)dz=t\left(\|\bv\|^2-\epsilon\|\bv\|\right)
    =t\|\bv\|\left(\|\bv\|-\epsilon\right)~.
    \end{align}
    On the other hand, $\tilde{f}_{\bv}(t),\tilde{f}_{\bv}(0)$ are both $\epsilon$-approximations of the same constant, since $f$ is a constant function. Thus, $|\tilde{f}_{\bv}(t)-\tilde{f}_{\bv}(0)|\leq2\epsilon$. Combining this with \eqref{eq:ftilde(t)-ftilde(0)} yields for all $t\geq 0:~2\epsilon~\geq~ t\|\bv\|\left(\|\bv\|-\epsilon\right)$.
    This can hold for all $t\geq 0$ only if $(\norm{\bv}-\epsilon) \leq 0$, implying the lemma.
\end{proof}

We are now ready to define a family of functions, with the rest of the proof devoted to analyze how a smoother acts on them.
Relying on \eqref{eq: 1/16B...reduction} we can define the set
\[
\Delta:=\left\{16r\sqrt{\frac{\log \left(\left(M+1\right)\left(T+1\right)\right)}{d}}\cdot k ~\middle|~ k=0,1,\ldots,\left\lfloor\frac{1}{16r}\sqrt{\frac{d}{\log \left(\left(M+1\right)\left(T+1\right)\right)}}\right\rfloor\right\}~.
\]
That is, a grid on $[0,1]$ which consists of points of distance $16r\sqrt{\frac{\log \left(\left(M+1\right)\left(T+1\right)\right)}{d}}$ one from another. We further define the ``inflation'' of $\Delta$ by $4r\sqrt{\frac{\log \left(\left(M+1\right)\left(T+1\right)\right)}{d}}$ around every point:\footnote{Note we use the quantities $T+1,M+1$ instead of the seemingly more natural $T,M$, since otherwise the logarithmic term in \eqref{eq:theorem inequality} can vanish, resulting in an invalid theorem. This would have occurred for randomized smoothing, where $T=M=1$.}
\[
\overline{\Delta}:=\left\{x\in\reals~\middle|~\exists p\in\Delta:|p-x|\leq4r\sqrt{\frac{\log \left(\left(M+1\right)\left(T+1\right)\right)}{d}}\right\}~.
\]
Now we define the function $g:\reals\to\reals$ as the unique continuous function which satisfies (see \figref{fig:g illustration} for an illustration)
\begin{align*}
    g(0)&=0 \\
    g'\left(x\right)&=
    \begin{cases}
    1~, & x\notin\overline{\Delta}\\
    0~, & x\in\overline{\Delta}\\
    \end{cases}
\end{align*}
Finally, we are ready to consider
\[
f_{\bw}\left(\bx\right)=g\left(\langle\bx,\bw\rangle\right)~,
\]
where $\bw\in\SSS^{d-1}$ is drawn uniformly from the unit sphere. The distribution over $\bw$ specifies a distribution over the functions $f_{\bw}$. We start by claiming that these functions are indeed in our function class of interest:
\begin{lemma}
    For all $\bw\in\SSS^{d-1}$, $f_{\bw}(\cdot)$ is 1-Lipschitz.
\end{lemma}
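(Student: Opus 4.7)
The plan is to show that $f_{\bw}$ is the composition of two $1$-Lipschitz maps, namely the linear functional $\bx \mapsto \langle \bx, \bw \rangle$ and the one-dimensional function $g$, from which the $1$-Lipschitz property of $f_{\bw}$ follows immediately.

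First I would verify that the linear map $\bx \mapsto \langle \bx, \bw \rangle$ is $1$-Lipschitz on $\reals^d$: this is immediate from Cauchy--Schwarz since $\|\bw\| = 1$ by the assumption $\bw \in \Scal^{d-1}$.

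Next I would show that $g : \reals \to \reals$ is $1$-Lipschitz. By its definition, $g$ is continuous and piecewise affine, with $g'(x) \in \{0, 1\}$ at every point of differentiability (the non-differentiable points form the discrete set of endpoints of the intervals comprising $\overline{\Delta}$, which is a set of Lebesgue measure zero). Hence $g$ is absolutely continuous, and for any $x < y$ in $\reals$ we have $|g(y) - g(x)| = \left|\int_x^y g'(z)\, dz\right| \leq \int_x^y |g'(z)|\, dz \leq y - x$, so $g$ is $1$-Lipschitz.

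Combining the two, for any $\bx, \by \in \reals^d$ we obtain
\[
|f_{\bw}(\bx) - f_{\bw}(\by)| = |g(\langle \bx, \bw\rangle) - g(\langle \by, \bw\rangle)| \leq |\langle \bx - \by, \bw\rangle| \leq \|\bx - \by\|,
\]
which proves the lemma. I do not anticipate any real obstacle here; the only mild subtlety is confirming that the derivative description uniquely determines a continuous $g$ via integration (which is precisely why $g$ was defined as ``the unique continuous function'' satisfying those conditions in the setup), and this is standard.
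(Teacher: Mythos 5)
Your proof is correct and follows essentially the same route as the paper: observe that $g$ is $1$-Lipschitz (the paper asserts this directly from the construction, you justify it via the a.e. derivative $g' \in \{0,1\}$ and integration) and compose with the $1$-Lipschitz map $\bx \mapsto \langle \bx, \bw \rangle$. No issues.
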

\begin{proof}
    It is clear by construction that $g$ is 1-Lipschitz. Thus
    \[
    \left|f\left(\bx\right)-f\left(\by\right)\right|
    =
    \left|g\left(\left\langle\bx,\bw\right\rangle \right)-g\left(\left\langle\by,\bw\right\rangle \right)\right|
    \leq
    \left|\left\langle\bx,\bw\right\rangle -\left\langle \by,\bw\right\rangle \right|
    =
    \left|\left\langle\bx-\by,\bw\right\rangle \right|
    \leq
    \left\Vert \bx-\by\right\Vert ~.
    \]
\end{proof}
The following lemma is the key lemma of the proof. It will show that there exists a function supported by the distribution we defined, such that many points mislead the smoother by appearing as if the function is constant - hence, the the smoother returns gradient estimates of small norm. Formally:

\begin{lemma} \label{lemma:distinguish f constant}
There exists $\bw\in\SSS^{d-1}$ such that for all $\delta\in\Delta:\ \E_{\xi}\left[\left\|\Scal\left(f_{\bw},\delta\bw\right)\right\|\right]\leq\epsilon+\frac{1}{32}$.
\end{lemma}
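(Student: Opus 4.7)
The plan is to prove the existence of $\bw^{*}$ via the probabilistic method over $\bw\sim\mathrm{Unif}(\Scal^{d-1})$. Following the informal sketch in the text, the goal is to argue that for a typical $\bw$, every query issued by $\mathcal{A}$ when run on $f_{\bw}$ starting from $\delta\bw$ will, with overwhelming probability in $\xi$, land inside the flat plateau of $g$ centered at $\delta$. On this ``good event'' the algorithm is effectively fooled into behaving as if the input were the constant function $c_{\delta}:=g(\delta)$, so $\mathsf{TICF}$ couples the two runs and Lemma~\ref{lemma: constant func norm<eps} applied to $c_{\delta}$ forces the coupled output to have small norm, up to a bad-event correction that will be absorbed into the $\tfrac{1}{32}$ slack.

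\textbf{Step 1 -- coupling via $\mathsf{TICF}$.} For each $\delta\in\Delta$ and realization $\xi$, let $\by_{1}(\xi),\ldots,\by_{T}(\xi)$ denote the query offsets produced by $\mathcal{A}$ when run on the constant function $c_{\delta}$ starting from the origin. By $\mathsf{TICF}$, these offsets are independent of the starting point (hence of $\bw$), and by the smoother definition $\|\by_{i}(\xi)\|\leq r$. Define the good event
\[
E_{\delta,\bw}:=\left\{\xi:\forall i\in[T],\ |\langle\by_{i}(\xi),\bw\rangle|<4r\sqrt{\tfrac{\log((M+1)(T+1))}{d}}\right\}.
\]
On $E_{\delta,\bw}$, every hypothetical query point $\delta\bw+\by_{i}(\xi)$ projects onto $\bw$ strictly inside the flat interval of $g$ around $\delta$, so $f_{\bw}$ locally equals $c_{\delta}$ on an open neighborhood of the query. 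By the locality of the oracle and induction on $i$, the actual run of $\mathcal{A}$ on $f_{\bw}$ from $\delta\bw$ produces identical queries and oracle responses to the constant-function run, yielding the pointwise identity $\mathcal{A}(f_{\bw},\delta\bw)=\mathcal{A}(c_{\delta},\delta\bw)$ on $E_{\delta,\bw}$.

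\textbf{Step 2 -- concentration and existence of $\bw^{*}$.} For any fixed $\by$ with $\|\by\|\leq r$, standard concentration of the uniform measure on $\Scal^{d-1}$ yields $\Pr_{\bw}[|\langle\by,\bw\rangle|\geq t]\leq 2\exp(-(d-1)t^{2}/(2r^{2}))$; choosing $t=4r\sqrt{\log((M+1)(T+1))/d}$ makes this a small inverse-polynomial of $(M+1)(T+1)$. Integrating over $\xi$ and union-bounding over $i\in[T]$ and $\delta\in\Delta$ (noting that $|\Delta|$ is polynomial in the parameters by the reduction preceding the lemma) produces $\Pr_{\bw,\xi}[\cup_{\delta}E_{\delta,\bw}^{c}]\leq\eta$ with $\eta$ small enough that $M\eta^{1/2}\leq\tfrac{1}{64}$ in the relevant parameter regime (guaranteed by the standing assumption $\sqrt{\log((M+1)(T+1))}<\tfrac{\sqrt{d}}{32r}$ established earlier in the proof). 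Markov's inequality in the inner direction then produces a deterministic $\bw^{*}\in\Scal^{d-1}$ with $\Pr_{\xi}[E_{\delta,\bw^{*}}^{c}]\leq\eta^{1/2}$ uniformly over $\delta\in\Delta$.

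\textbf{Step 3 -- bound and main obstacle.} For this $\bw^{*}$ and any $\delta\in\Delta$, I decompose
\[
\E_{\xi}[\|\mathcal{A}(f_{\bw^{*}},\delta\bw^{*})\|]=\E_{\xi}[\|\mathcal{A}(c_{\delta},\delta\bw^{*})\|\mathbf{1}_{E_{\delta,\bw^{*}}}]+\E_{\xi}[\|\mathcal{A}(f_{\bw^{*}},\delta\bw^{*})\|\mathbf{1}_{E_{\delta,\bw^{*}}^{c}}].
\]
The second term is at most $M\eta^{1/2}\leq\tfrac{1}{64}$ by the almost-sure bound on $\|\mathcal{A}\|$. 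For the first term, I plan to re-run the directional-derivative line-integration argument of Lemma~\ref{lemma: constant func norm<eps} applied to $c_{\delta}$ but restricted to the good event: the trimmed vector expectation $\E_{\xi}[\mathcal{A}(c_{\delta},\delta\bw^{*})\mathbf{1}_{E_{\delta,\bw^{*}}}]$ differs in norm from $\E_{\xi}[\mathcal{A}(c_{\delta},\delta\bw^{*})]$ by at most $M\eta^{1/2}\leq\tfrac{1}{64}$, and the smoothness-plus-$\epsilon$-approximation argument of Lemma~\ref{lemma: constant func norm<eps} forces the latter to have norm at most $\epsilon$, so the former is bounded in norm by $\epsilon+\tfrac{1}{64}$; summing the two pieces yields the claimed $\epsilon+\tfrac{1}{32}$. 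The principal obstacle is precisely this last step: Lemma~\ref{lemma: constant func norm<eps} natively controls only $\|\E[\mathcal{A}]\|$, and upgrading this to the expectation-of-norm form in the lemma statement requires either the good-event trimming sketched above or, equivalently, the observation that the downstream use of this lemma in the proof of Theorem~\ref{thm:smoothingmain} only ever needs $\|\E[\mathcal{A}]\|$, in which case the weaker-but-sufficient bound $\|\E_{\xi}[\mathcal{A}(f_{\bw^{*}},\delta\bw^{*})]\|\leq\epsilon+\tfrac{1}{32}$ follows immediately from the coupling and the almost-sure bound, without any trimming.
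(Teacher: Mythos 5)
Your approach is essentially the paper's: random $\bw \sim \mathrm{Unif}(\Scal^{d-1})$, a good event on which all queries have small correlation with $\bw$, $\mathsf{TICF}$-coupling to a constant function so that \lemref{lemma: constant func norm<eps} can be invoked, the probabilistic method to fix a single $\bw$, and a good/bad-event decomposition with the bad contribution absorbed by the almost-sure bound $M$. Your organization is cosmetically different (the paper runs $\mathcal{A}$ on $f_{\bw}$ from $\bf{0}$ and proves by induction that the queries from $\delta\bw$ are exact $\delta\bw$-shifts; you define the offsets directly from the constant-function run), but the engine is the same. One small simplification you are entitled to but didn't fully exploit: as you note in Step 1, the offsets $\by_i(\xi)$ are independent of the starting point and of $\bw$, hence the good event $E_{\delta,\bw}$ is in fact the same event for every $\delta\in\Delta$; the union bound over $\Delta$ in Step 2 is therefore unnecessary. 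The paper's $E_{\bw}$ is accordingly not indexed by $\delta$.

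The most valuable thing in your writeup is that you caught a real inconsistency in the paper. The lemma as printed asserts $\E_{\xi}\bigl[\|\mathcal{A}(f_{\bw},\delta\bw)\|\bigr]\leq\epsilon+\tfrac{1}{32}$, but the paper's own proof concludes only $\bigl\|\E_{\xi}[\mathcal{A}(f_{\bw},\delta\bw)]\bigr\|\leq\epsilon+\tfrac{1}{32}$, and the downstream use in \lemref{lemma:phi'(delta)<eps+1/32} plugs in $\bigl\|\E[\mathcal{A}(f,\delta\bw)]\bigr\|$ as well. The expectation-of-norm form is genuinely stronger (Jensen, convexity of $\|\cdot\|$) and in fact is not provable from the given assumptions: a smoother applied to a constant function may return $\pm M\be_1$ with equal probability, making $\E[\|\cdot\|]=M$ while $\|\E[\cdot]\|=0$. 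The statement should read $\bigl\|\E_{\xi}[\cdot]\bigr\|$.

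This is also precisely where your own Step~3 has a gap if taken as a proof of the literal statement. You decompose $\E_{\xi}[\|\mathcal{A}\|]=\E[\|\mathcal{A}\|\mathbf{1}_{E}]+\E[\|\mathcal{A}\|\mathbf{1}_{E^{c}}]$, bound the second term by $M\Pr[E^{c}]$, and then for the first term you actually bound $\bigl\|\E[\mathcal{A}\,\mathbf{1}_{E}]\bigr\|$ via the line-integration argument of \lemref{lemma: constant func norm<eps}. But $\E[\|\mathcal{A}\|\mathbf{1}_{E}]\geq\bigl\|\E[\mathcal{A}\,\mathbf{1}_{E}]\bigr\|$, so this does not control the first term; the inequality points the wrong way. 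Your fallback observation is correct and is what rescues the argument: the weaker bound $\bigl\|\E_{\xi}[\mathcal{A}(f_{\bw},\delta\bw)]\bigr\|\leq\epsilon+\tfrac{1}{32}$ is what the proof actually delivers and is all that \lemref{lemma:phi'(delta)<eps+1/32} requires. One small caveat on that fallback remark: it does not come ``without any trimming''. The paper's own derivation of the $\|\E[\cdot]\|$ bound still decomposes into the $(*)$/$(**)$ pieces and pays the bad-event tax $M\Pr[\lnot E_{\bw}]$ twice; the trimming is needed, just for a vector expectation rather than for an expectation of norms.
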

\begin{proof}
    Let $\bx^{(\bw)}_1,\dots,\bx^{(\bw)}_T$ be the (possibly randomized) queries produced by $\Scal\left(f_{\bw},\bf{0}\right)$.
    Consider the event $E_{\bw}$, in which for all $i\in[T]:\ \left|\langle \bx^{(\bw)}_i,\bw\rangle\right|<4r\sqrt{\frac{\log \left(\left(M+1\right)\left(T+1\right)\right)}{d}}$. Note that if $E_{\bw}$ occurs then for all $\delta\in\Delta,i\in[T],\bv\in\reals^d$:
    \begin{equation} \label{eq:f_w=g(delta+xw+vw)}
        f_{\bw}\left(\bx^{(\bw)}_i+\delta\bw+\bv\right)
       =g\left(\left\langle \bx^{(\bw)}_i+\delta\bw+\bv,\bw\right\rangle\right)
       =g\left(\delta+\left\langle \bx^{(\bw)}_i,\bw\right\rangle+\left\langle\bv,\bw\right\rangle\right)~.
    \end{equation}
    In particular, as long as $\left\|\bv\right\|<4r\sqrt{\frac{\log\left(\left(M+1\right)\left(T+1\right)\right)}{d}}-\left|\left\langle \bx_{i}^{(\bw)},\bw\right\rangle \right|$, which by Cauchy-Schwarz implies
    \begin{equation*}
        \left|\left\langle \bx_{i}^{(\bw)},\bw\right\rangle +\left\langle \bv,\bw\right\rangle\right| 
        <4r\sqrt{\frac{\log\left(\left(M+1\right)\left(T+1\right)\right)}{d}}~,
    \end{equation*}
    we get by construction of $g$ and \eqref{eq:f_w=g(delta+xw+vw)} that
    \begin{equation*}
        f_{\bw}\left(\bx^{(\bw)}_i+\delta\bw+\bv\right)=g\left(\delta\right)~.
    \end{equation*}
    In other words, if $E_\bw$ occurs then inside some neighborhood of $\bx^{(\bw)}_i+\delta\bw$, the function $f_{\bw}$ is identical to the constant function $g\left(\delta\right)$. Therefore, if $E_\bw$ occurs the local oracle $\OO$ satisfies for any $\delta\in\Delta,i\in[T]$:
    \begin{equation} \label{eq:Oracle= if Ew}
        \OO_{f_{\bw}}\left(\bx_{i}^{(\bw)}+\delta\bw\right)
        =\OO_{\bx\mapsto g\left(\delta\right)}\left(\bx_{i}^{(\bw)}+\delta\bw\right)~.
    \end{equation}
    Fix some $\delta_{0}\in\Delta$, and let $\tilde{\bx}^{(\bw)}_1,\dots,\tilde{\bx}^{(\bw)}_T$ be the (possibly randomized) queries produced by $\Scal\left(f_{\bw},\delta_{0}\bf{\bw}\right)$.
    We will now show that conditioned on $E_{\bw}$, for all $i\in[T]$:
    \begin{equation} \label{eq: tilde q_i=q_i+delta}
        \tilde{\bx}_{i}^{(\bw)}=\bx_{i}^{(\bw)}+\delta_{0}{\bw}~,
    \end{equation}
    in the sense that for every realization of $\Scal'$'s randomness $\xi$ they are equal. We show this by induction on $i$. For $i=1$, using $\mathsf{TICF}$:
    \[
    \tilde{\bx}_{1}^{(\bw)}=\Scal^{(1)}\left(\xi,\delta_{0}{\bf\bw}\right)=\Scal^{(1)}\left(\xi,{\bf 0}\right)+\delta_{0}{\bf\bw}=\bx_{1}^{(\bw)}+\delta_{0}{\bf\bw}~.
    \]
    Assuming this is true up until $i$, then by the induction hypothesis, \eqref{eq:Oracle= if Ew} and $\mathsf{TICF}$:
    \begin{align*}
        \tilde{\bx}_{i+1}^{(\bw)}&=\Scal^{(i)}\left(\xi,\delta_{0}{\bw},\OO_{f_\bw}\left(\tilde{\bx}_{1}^{(\bw)}\right),\dots,\OO_{f_\bw}\left(\tilde{\bx}_{i}^{(\bw)}\right)\right)\\
        &=\Scal^{(i)}\left(\xi,\delta_{0}{\bw},\OO_{f_\bw}\left(\bx_{1}^{(\bw)}+\delta_{0}{\bw}\right),\dots,\OO_{f_\bw}\left(\bx_{i}^{(\bw)}+\delta_{0}{\bw}\right)\right)\\
        &=\Scal^{(i)}\left(\xi,\delta_{0}{\bw},\OO_{\bx\mapsto g\left(\delta_{0}\right)}\left(\bx_{1}^{(\bw)}+\delta_{0}{\bw}\right),\dots,\OO_{\bx\mapsto g\left(\delta_{0}\right)}\left(\bx_{i}^{(\bw)}+\delta_{0}{\bw}\right)\right)\\
        &=\Scal^{(i)}\left(\xi,\mathbf{0},\OO_{\bx\mapsto g\left(0\right)}\left(\bx_{1}^{(\bw)}\right),\dots,\OO_{\bx\mapsto g\left(0\right)}\left(\bx_{1}^{(\bw)}\right)\right)+\delta_{0}\bw\\
        &=\Scal^{(i)}\left(\xi,\mathbf{0},\OO_{f_\bw}\left(\bx_{1}^{(\bw)}\right),\dots,\OO_{f_\bw}\left(\bx_{i}^{(\bw)}\right)\right)+\delta_{0}\bw\\
        &=\bx_{i+1}^{(\bw)}+\delta_{0}\bw~.
    \end{align*}
    Having established \eqref{eq: tilde q_i=q_i+delta} for any $\delta\in\Delta$, we turn to show that for all $\delta\in\Delta$:
    \begin{equation} \label{eq: distinguish from constant if E_w}
        \E_{\xi}\left[\Scal\left(f_{\bw},\delta{\bf\bw}\right)\middle|E_{\bw}\right]
        =\E_{\xi}\left[\Scal\left(\bx\mapsto0,{\bf 0}\right)\middle|E_{\bw}\right]~.
    \end{equation}
    Indeed, by \eqref{eq: tilde q_i=q_i+delta}, \eqref{eq:Oracle= if Ew} and $\mathsf{TICF}$:
    \begin{align*}
    \mathbb{E}_{\xi}\left[\Scal\left(f_{\bw},\delta{\bf\bw}\right)\middle|E_{\bw}\right]
    &=\mathbb{E}_{\xi}\left[\Scal^{(out)}\left(\xi,\delta{\bf\bw},\OO_{f_{\bw}}\left(\tilde{\bx}_{1}^{(\bw)}\right),\dots,\OO_{f_{\bw}}\left(\tilde{\bx}_{T}^{(\bw)}\right)\right)\middle|E_{\bw}\right] \\
    &=\mathbb{E}_{\xi}\left[\Scal^{(out)}\left(\xi,\delta{\bf\bw},\OO_{f_{\bw}}\left(\bx_{1}^{(\bw)}+\delta\bw\right),\dots,\OO_{f_{\bw}}\left(\bx_{T}^{(\bw)}+\delta\bw\right)\right)\middle|E_{\bw}\right] \\
    &=\mathbb{E}_{\xi}\left[\Scal^{(out)}\left(\xi,\delta{\bf\bw},\OO_{\bx\mapsto g\left(\delta\right)}\left(\bx_{1}^{(\bw)}+\delta\bw\right),\dots,\OO_{\bx\mapsto g\left(\delta\right)}\left(\bx_{T}^{(\bw)}+\delta\bw\right)\right)\middle|E_{\bw}\right] \\
    &=\mathbb{E}_{\xi}\left[\Scal^{(out)}\left(\xi,\mathbf{0},\OO_{\bx\mapsto0}\left(\bx_{1}^{(\bw)}\right),\dots,\OO_{\bx\mapsto0}\left(\bx_{T}^{(\bw)}\right)\right)\middle|E_{\bw}\right] \\
    &=\mathbb{E}_{\xi}\left[\Scal\left(\bx\mapsto0,{\bf 0}\right)\middle|E_{\bw}\right]~.
    \end{align*}
    We now turn to show that $E_{\bw}$ is likely to occur. Fix some realization of $\Scal$'s randomness $\xi$, and let $\bq^{\xi}_1,\dots,\bq^{\xi}_T$ be the (deterministic) queries produced by $\Scal\left(\by\mapsto0,\bf{0}\right)$. We claim that if for all $i\in[T]:\ \left|\langle \bq^{\xi}_i,\bw\rangle\right|<4r\sqrt{\frac{\log\left(\left(M+1\right)\left(T+1\right)\right)}{d}}$ then $\left(\bq^{\xi}_1,\dots,\bq^{\xi}_T\right)=\left(\bx^{(\bw)}_1,\dots,\bx^{(\bw)}_T\right)$ independently of $\bw$. We show this by induction on $i$. For $i=1$:
    \[
    \bq_1^{\xi}=
    \Scal^{(1)}\left(\xi,\bf{0}\right)=\bx_1^{(\bw)}~.
    \]
    Assuming true up until $i$, then
    \begin{align*}
        \bq_{i+1}^{\xi} 
        &=\Scal^{(i)}\left(\xi,{\bf 0},\OO_{\bx\mapsto0}\left(\bq_1^{\xi}\right),\dots,\OO_{\bx\mapsto0}\left(\bq_i^{\xi}\right)\right)\\
        &=\Scal^{(i)}\left(\xi,{\bf 0},\OO_{f_\bw}\left(\bq_1^{\xi}\right),\dots,\OO_{f_\bw}\left(\bq_i^{\xi}\right)\right)\\
        &=\Scal^{(i)}\left(\xi,{\bf 0},\OO_{f_\bw}\left(\bx_{1}^{(\bw)}\right),\dots,\OO_{f_\bw}\left(\bx_{i}^{(\bw)}\right)\right)\\
        &=\bx_{i+1}^{(\bw)}~,
    \end{align*}
    where we used the assumption on $\bq_i^{\xi}$ and the induction hypothesis.
    Recall that by assumption on the algorithm $\left\|\bq^{\mathcal{\xi}}_i\right\|\leq{r}$ for all $i\in[T]$. Using the union bound and concentration of measure on the sphere (e.g., \citealp[Lemma 2.2]{ball1997elementary}) we can bound the probability of the complementary event
    \begin{align*}
    \Pr_{\bw}\left[E_{\bw}^c~\middle|~\xi\right]
    =
    &\Pr_{\bw}\left[\exists i\in[T]:\ \left|\langle \bq^{\xi}_i,\bw\rangle\right|\geq4r\sqrt{\frac{\log\left(\left(M+1\right)\left(T+1\right)\right)}{d}}\right] \\
    =&
    \Pr_{\bw}\left[\exists i\in[T]:\ \left|\left\langle \frac{1}{r}\bq^{\xi}_i,\bw\right\rangle\right|\geq4\sqrt{\frac{\log\left(\left(M+1\right)\left(T+1\right)\right)}{d}}\right] \\
    \leq&
    T\cdot 2\exp\left(-\frac{d\cdot\left(4\sqrt{\frac{\log\left(\left(M+1\right)\left(T+1\right)\right)}{d}}\right)^2}{2}\right)\\
    =&\frac{2T}{\left(M+1\right)^8\left(T+1\right)^8}
    \leq\frac{2}{\left(M+1\right)^8\left(T+1\right)^7}~.
    \end{align*}
    This inequality holds for any realization of $\Scal$'s randomness $\xi$, hence by the law of total probability
    \[
    \Pr_{\xi,\bw}\left[E_{\bw}^c\right]\leq\frac{2}{\left(M+1\right)^8\left(T+1\right)^7}~.
    \]
    In particular, since $\Pr_{\xi,{\bw}}\left[E_{\bw}^c\right]=\E_{\bw}\left[\Pr_{\xi}\left[E_{\bw}^c\middle|\bw\right]\right]$, there exists $\bw\in{\SSS^{d-1}}$ such that 
    \begin{equation} \label{eq:P_A[notE_w]<2/MQ}
    \Pr_{\xi}\left[ E_{\bw}^c\right]\leq\frac{2}{\left(M+1\right)^8\left(T+1\right)^7}~.
    \end{equation}
    For this fixed $\bw$, we have for all $\delta\in\Delta$ by the law of total expectation and the triangle inequality:
    \begin{equation} \label{eq:(*)+(**)}
    \left\|\E_{\xi}\left[\Scal\left(f_{\bw},\delta\bw\right)\right]\right\|
    \leq
    \left\|\underset{(*)}{\underbrace{\E_{\xi}\left[\Scal\left(f_{\bw},\delta\bw\right)\middle| E_\bw\right]\cdot\Pr_{\xi}\left[E_{\bw}\right]}}\right\|
    +
    \left\|\underset{(**)}{\underbrace{\E_{\xi}\left[\Scal\left(f_{\bw},\delta\bw\right)\middle| E_{\bw}^c\right]\cdot\Pr_{\xi}\left[ E_{\bw}^c\right]}}\right\|~.
    \end{equation}
    On one hand, by \eqref{eq: distinguish from constant if E_w}:
    \[
    (*)=\E_{\xi}\left[\Scal\left(\bx\mapsto0,\bf{0}\right)\middle|E_{\bw}\right]\cdot\Pr_{\xi}\left[E_{\bw}\right]
    =
    \E_{\xi}\left[\Scal\left(\bx\mapsto0,\bf{0}\right)\right]-\E_{\xi}\left[\Scal\left(\bx\mapsto0,\bf{0}\right)\middle| E_{\bw}^c\right]\cdot\Pr_{\xi}\left[E_{\bw}^c\right]
    ~.
    \]
    Using \lemref{lemma: constant func norm<eps}, and by incorporating the definition of $M$ in \eqref{eq:M almost-surely condition} and \eqref{eq:P_A[notE_w]<2/MQ} we get
    \begin{equation} \label{eq:|*|}
    \|(*)\|\leq
    \epsilon+
    M\cdot\frac{2}{\left(M+1\right)^8\left(T+1\right)^7}
    \leq\epsilon+\frac{2}{\left(M+1\right)^7\left(T+1\right)^7}~.
    \end{equation}
    On the other hand, by \eqref{eq:M almost-surely condition} and \eqref{eq:P_A[notE_w]<2/MQ} again we have
    \begin{equation} \label{eq:|**|}
    \|(**)\|\leq
    \left\|\E_{\xi}\left[\Scal\left(f_{\bw},\delta\bw\right)\middle| E_{\bw}^c\right]\right\|\cdot\Pr_{\xi}\left[E_{\bw}^c\right]
    \leq
    M\cdot\frac{2}{\left(M+1\right)^8\left(T+1\right)^7}
    \leq\frac{2}{\left(M+1\right)^7\left(T+1\right)^7}~.
    \end{equation}
    Overall, plugging \eqref{eq:|*|} and \eqref{eq:|**|} into \eqref{eq:(*)+(**)}, gives
    \[
    \left\|\E_{\xi}\left[\Scal\left(f_{\bw},\delta\bw\right)\right]\right\|
    \leq\epsilon+\frac{4}{\left(M+1\right)^7\left(T+1\right)^7}
    \leq\epsilon+\frac{1}{32}~,
    \]
    where the last inequality simply follows from the fact that $M>0,~T\geq1$.
\end{proof}
From now on, we fix $\bw\in\SSS^{d-1}$ which is given by the previous lemma and denote $f=f_{\bw}$. Denote by $\tilde{f}$ the $\epsilon$-approximation of $f$ with $L$-Lipschitz gradients implicitly computed by $\Scal$. We turn our focus to the directional projection:
\begin{align*}
    &\varphi:\left[0,1\right]\to\reals\\
    &\varphi(t)=\tilde{f}\left(t\cdot\bw\right)~.
\end{align*}
Note that by assumption on $\tilde{f}$, $\varphi$ is differentiable, and $\varphi'$ is $L$-Lipschitz. \lemref{lemma:distinguish f constant} ensures us that $\varphi'$ is relatively close to zero on the grid $\Delta$, as showed in the following lemma.
\begin{lemma} \label{lemma:phi'(delta)<eps+1/32}
    $\forall\delta\in\Delta:\ \left|\varphi'\left(\delta\right)\right|\leq2\epsilon+\frac{1}{32}$
\end{lemma}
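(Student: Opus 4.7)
The plan is very short: this lemma is a direct consequence of \lemref{lemma:distinguish f constant} combined with the defining approximation property of a smoother.

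First, I would write out $\varphi'(\delta)$ using the chain rule. Since $\tilde{f}$ is differentiable with $L$-Lipschitz gradient, we have $\varphi'(t) = \langle \nabla \tilde{f}(t\bw), \bw\rangle$, so by Cauchy--Schwarz and $\norm{\bw}=1$,
\[
|\varphi'(\delta)| \;\leq\; \norm{\nabla \tilde{f}(\delta \bw)}~.
\]
Thus it suffices to bound $\norm{\nabla \tilde{f}(\delta \bw)}$ by $2\epsilon + \frac{1}{32}$ for every $\delta \in \Delta$.

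Next, I would apply the smoother's defining property \eqref{eq:M almost-surely condition} at the point $\bx = \delta\bw$, which yields
\[
\norm{\E_\xi[\Acal(f,\delta\bw)] - \nabla \tilde{f}(\delta \bw)} \;\leq\; \epsilon~.
\]
Then I would invoke \lemref{lemma:distinguish f constant} (applied to the particular $\bw$ fixed just before the current lemma), which gives
\[
\norm{\E_\xi[\Acal(f_\bw,\delta\bw)]} \;\leq\; \epsilon + \tfrac{1}{32}
\]
for every $\delta \in \Delta$. Combining the two bounds via the triangle inequality,
\[
\norm{\nabla \tilde{f}(\delta \bw)} \;\leq\; \norm{\E_\xi[\Acal(f,\delta\bw)]} + \epsilon \;\leq\; 2\epsilon + \tfrac{1}{32}~,
\]
which, together with the chain-rule observation, finishes the proof.

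There is no real obstacle here, since all the substantive work (the concentration argument on the sphere, the coupling between queries under $\mathcal{A}(f_\bw,\delta\bw)$ and $\mathcal{A}(\bx\mapsto 0,\mathbf{0})$ using $\mathsf{TICF}$, and the local indistinguishability of $f_\bw$ from a constant function inside the chosen flat band) has already been carried out in \lemref{lemma:distinguish f constant}. The present lemma is merely the step that converts a bound on the expectation of the smoother's output into a bound on the directional derivative of $\tilde{f}$ along $\bw$ at the grid points; everything else follows from the definition of a smoother and Cauchy--Schwarz.
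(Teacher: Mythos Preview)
Your proposal is correct and matches the paper's proof essentially line for line: the paper also computes $\varphi'(\delta)=\langle\nabla\tilde{f}(\delta\bw),\bw\rangle$, applies Cauchy--Schwarz with $\|\bw\|=1$, and then bounds $\|\nabla\tilde{f}(\delta\bw)\|$ via the triangle inequality using the smoother's approximation property together with \lemref{lemma:distinguish f constant}.
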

\begin{proof}
    By Cauchy-Schwarz, \lemref{lemma:distinguish f constant} and the definition of a smoother, we get that for all $\delta\in\Delta$:
    \begin{align*}
    \left|\varphi'\left(\delta\right)\right|
    &=
    \left|\left\langle\nabla\tilde{f}\left(\delta\bw\right),\bw\right\rangle\right|
    \leq\left\|\nabla\tilde{f}\left(\delta\bw\right)\right\|\cdot\left\|\bw\right\|
    =\left\|\nabla\tilde{f}\left(\delta\bw\right)\right\|
    \\
    &\leq
    \left\|\E\left[\Scal\left(f,\delta\bw\right)\right]-\nabla\tilde{f}\left(\delta\bw\right)\right\|+\left\|\E\left[\Scal\left(f,\delta\bw\right)\right]\right\|
    \leq\epsilon+\epsilon+\frac{1}{32}~.
    \end{align*}
\end{proof}
By combining the fact that $\varphi'$ has small values along the grid $\Delta$, with the fact that $\varphi'$ is $L$-Lipschitz, we can bound the oscillation of $\varphi$ along the unit interval.
\begin{lemma} \label{lemma:ph(1)-ph(0)}
    $\left|\varphi\left(1\right)-\varphi\left(0\right)\right|\leq2\epsilon+\frac{1}{32}+\frac{4Lr\sqrt{\log\left(\left(M+1\right)\left(T+1\right)\right)}}{\sqrt{d}}$ .
\end{lemma}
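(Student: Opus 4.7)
The plan is to write $\varphi(1) - \varphi(0) = \int_0^1 \varphi'(t)\,dt$ and bound this integral by partitioning $[0,1]$ along the grid $\Delta$, controlling $\varphi'$ at non-grid points via Lemma~\ref{lemma:phi'(delta)<eps+1/32} combined with the $L$-Lipschitz property of $\varphi'$. Setting $s := 16r\sqrt{\log((M+1)(T+1))/d}$ for the grid spacing and $N := \lfloor 1/s \rfloor$, the grid points of $\Delta$ are $0, s, \ldots, Ns$, and I decompose $[0,1]$ into the sub-intervals $[ks,(k+1)s]$ for $k = 0, \ldots, N-1$ together with the leftover $[Ns,1]$ of length strictly less than $s$.

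On each grid sub-interval $[ks,(k+1)s]$ the key step is a trapezoidal-style estimate: I split at the midpoint and, on each half, anchor the Lipschitz bound $|\varphi'(t)| \leq |\varphi'(\delta)| + L|t-\delta|$ at the nearer endpoint $\delta \in \Delta$. Lemma~\ref{lemma:phi'(delta)<eps+1/32} bounds the anchor values by $2\epsilon + \tfrac{1}{32}$, and the Lipschitz remainder integrates to $2L\int_0^{s/2} u\,du = Ls^2/4$. This gives the per-interval estimate $|\varphi((k+1)s)-\varphi(ks)| \leq s(2\epsilon + \tfrac{1}{32}) + \tfrac{Ls^2}{4}$. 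Summing over $k$ and using $Ns \leq 1$ collapses the first term to $(2\epsilon + \tfrac{1}{32})$ and the second to $\tfrac{NLs^2}{4} \leq \tfrac{Ls}{4} = 4Lr\sqrt{\log((M+1)(T+1))/d}$, which matches the claimed bound.

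The leftover interval $[Ns,1]$ of length $\ell < s$ I would handle by the one-sided Lipschitz estimate anchored at $Ns \in \Delta$; since the reduction \eqref{eq: 1/16B...reduction} forces $s < \tfrac{1}{2}$, the contribution $\ell(2\epsilon + \tfrac{1}{32}) + L\ell^2/2$ is of the same or lower order than the main terms and can be absorbed into the stated constants. The main technical delicacy is the symmetric midpoint split on each grid sub-interval: it is precisely this halving (so that the Lipschitz remainder is taken over distance at most $s/2$ rather than $s$ from the anchor) that yields the coefficient $\tfrac{1}{4}$ instead of $\tfrac{1}{2}$ in front of $Ls^2$ per interval, and hence the factor $4$ rather than a cruder factor in the final bound.
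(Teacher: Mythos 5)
Your approach is essentially identical to the paper's: write $\varphi(1)-\varphi(0)=\int_0^1\varphi'(t)\,dt$, partition $[0,1]$ along $\Delta$, bound $|\varphi'|$ on each grid cell $[\delta_i,\delta_{i+1}]$ by the symmetric ``hat'' function $\min_{j\in\{i,i+1\}}\left(|\varphi'(\delta_j)|+L|t-\delta_j|\right)$ anchored at the two endpoints via Lemma~\ref{lemma:phi'(delta)<eps+1/32} and $L$-Lipschitzness, and observe that each half-cell contributes $L\int_0^{s/2}u\,du$, producing the factor $Ls^2/4$ per interval. This is exactly the function $l(t)$ drawn in Figure~\ref{fig:l illustration}, and the arithmetic giving $(2\epsilon+\tfrac{1}{32})s+Ls^2/4$ per interval matches the paper's \eqref{eq: linear integral bound}. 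In fact you are \emph{more} careful than the paper on one point: the paper's chain contains the step $\int_0^1|\varphi'(t)|\,dt=\sum_{i=0}^{K-1}\int_{\delta_i}^{\delta_{i+1}}|\varphi'(t)|\,dt$, which is only an equality when $\delta_K=1$; in general it silently drops the leftover integral over $[\delta_K,1]$, which is exactly the tail you explicitly try to account for.

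However, your claim that the leftover contribution ``can be absorbed into the stated constants'' is not correct as written. Keeping all terms precisely, and writing $\ell=1-Ns\in[0,s)$, the full bound from your decomposition is
\[
\int_0^1|\varphi'|\;\le\;(2\epsilon+\tfrac{1}{32})(Ns+\ell)\;+\;\frac{NLs^2}{4}\;+\;\frac{L\ell^2}{2}
\;=\;2\epsilon+\tfrac{1}{32}\;+\;\frac{Ls}{4}\;+\;L\ell\left(\frac{\ell}{2}-\frac{s}{4}\right),
\]
so the residual $L\ell(\ell/2-s/4)$ is nonpositive precisely when $\ell\le s/2$. Nothing in the construction guarantees this; for instance if $s=0.35$ then $\ell=0.3>s/2$. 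So the one-sided estimate on $[Ns,1]$ genuinely overshoots the stated bound in the worst case, by up to about $Ls^2/4$. Note also that bounding $Ns(2\epsilon+\tfrac{1}{32})\le 2\epsilon+\tfrac{1}{32}$ \emph{before} adding the leftover's $\ell(2\epsilon+\tfrac{1}{32})$, as you do, is wasteful --- you should keep $Ns$ and let $Ns+\ell=1$ collapse the first-order terms exactly. The clean fix is either to round the grid spacing so that $1/s$ is an integer, or simply to state the lemma with a marginally larger constant in place of $4$; either way the downstream computation in the proof of Theorem~\ref{thm:smoothingmain} still goes through with adjusted $c_1,c_2$. Since the paper's own ``equality'' step has the same latent boundary gap, you should not lose confidence in the approach --- but you should not present the absorption as free.
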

\begin{proof}
    Denote $\delta_i=16r\sqrt{\frac{\log \left(\left(M+1\right)\left(T+1\right)\right)}{d}}\cdot{i}$, and note that for all $i\in\left[\left\lfloor\frac{1}{16r}\sqrt{\frac{d}{\log \left(\left(M+1\right)\left(T+1\right)\right)}}\right\rfloor\right]:\ \delta_i\in\Delta$. Then
    \begin{gather} 
    \left|\varphi\left(1\right)-\varphi\left(0\right)\right|
    =
    \left|\int_{0}^{1}\varphi'\left(t\right)dt\right|
    \leq
    \int_{0}^{1}\left|\varphi'\left(t\right)\right|dt
    =
    \sum_{i=0}^{\left\lfloor\frac{1}{16r}\sqrt{\frac{d}{\log \left(\left(M+1\right)\left(T+1\right)\right)}}\right\rfloor-1}\int_{\delta_i}^{\delta_{i+1}}\left|\varphi'\left(t\right)\right|dt \nonumber \\
    \leq\left(\frac{1}{16r}\sqrt{\frac{d}{\log \left(\left(M+1\right)\left(T+1\right)\right)}}\right)\cdot\max_{i}\int_{\delta_i}^{\delta_{i+1}}\left|\varphi'\left(t\right)\right|dt~. \label{eq:phi(1)-phi(0)}
    \end{gather}
    By \lemref{lemma:phi'(delta)<eps+1/32} we have $\left|\varphi'\left(\delta_i\right)\right|,\left|\varphi'\left(\delta_{i+1}\right)\right|\leq2\epsilon+\frac{1}{32}$. Recall that $\varphi'$ is $L$-Lipschitz, so $\left|\varphi'\left(t\right)\right|$ is majorized on the interval $\left[\delta_i,\delta_{i+1}\right]$ by the piecewise linear function (see \figref{fig:l illustration})
    \[
    l\left(t\right)=\begin{cases}
    2\epsilon+\frac{1}{32}+L\left(t-\delta_{i}\right) & \delta_{i}\leq t\leq\frac{\delta_{i}+\delta_{i+1}}{2}\\
    2\epsilon+\frac{1}{32}+L\left(\delta_{i+1}-t\right) & \frac{\delta_{i}+\delta_{i+1}}{2}<t\leq \delta_{i+1}
    \end{cases}
    ~.
    \]
    \begin{figure}[t]
    \centering
    \includegraphics[scale=0.35, trim=220 200 100 250]{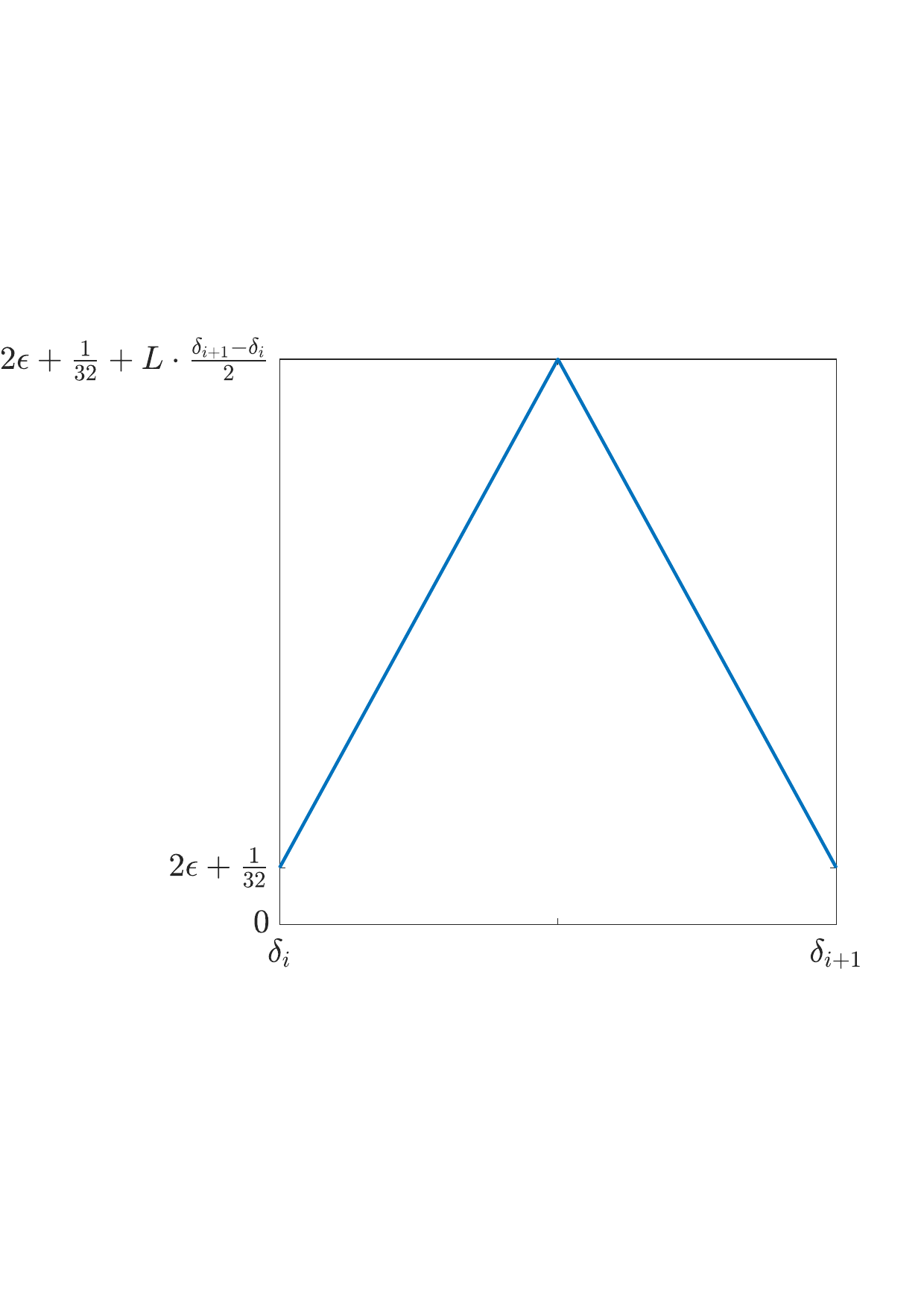}
    \caption{Illustration of $l(t)$}
    \label{fig:l illustration}
\end{figure}
Consequently,
\begin{gather}
\int_{\delta_i}^{\delta_{i+1}}\left|\varphi'\left(t\right)\right|dt
\leq
\int_{\delta_i}^{\delta_{i+1}}l\left(t\right)dt \nonumber\\ 
=
\left(2\epsilon+\frac{1}{32}\right)\cdot16r\sqrt{\frac{\log \left(\left(M+1\right)\left(T+1\right)\right)}{d}}
+L\left(8r\sqrt{\frac{\log \left(\left(M+1\right)\left(T+1\right)\right)}{d}}\right)^{2}~, \label{eq: linear integral bound}
\end{gather}
where the last equality is a direct calculation. Plugging \eqref{eq: linear integral bound}
into \eqref{eq:phi(1)-phi(0)}, we get that
\begin{equation*}
\left|\varphi\left(1\right)-\varphi\left(0\right)\right|
\leq
2\epsilon+\frac{1}{32}+\frac{4Lr\sqrt{\log\left(\left(M+1\right)\left(T+1\right)\right)}}{\sqrt{d}}~.
\end{equation*}
\end{proof}
We are now ready to finish the proof. Notice that $\varphi\left(0\right)=\tilde{f}\left(0\right),\ 
\varphi\left(1\right)=\tilde{f}\left(\bw\right)
$. Additionally, a direct calculation shows that $f(\mathbf{0})=0,\ f(\bw)\geq\frac{1}{2}$. Using the fact that $\|\tilde{f}-f\|_{\infty}\leq\epsilon$, \lemref{lemma:ph(1)-ph(0)} reveals
\begin{align*}
\frac{1}{2}&\leq\left|f(\bw)-f(0)\right|
\leq\left|\tilde{f}(\bw)-\tilde{f}(0)\right|+2\epsilon
=\left|\varphi\left(1\right)-\varphi\left(0\right)\right|+2\epsilon\\
&\leq4\epsilon+\frac{1}{32}+\frac{4Lr\sqrt{\log\left(\left(M+1\right)\left(T+1\right)\right)}}{\sqrt{d}}
\\
&\implies
L\sqrt{\log{\left(\left(M+1\right)\left(T+1\right)\right)}}\geq\frac{\sqrt{d}}{r}\left(\frac{15}{128}-\epsilon\right)~.
\end{align*}

\section{Discussion} \label{sec:discussion}

In this paper, we studied the problem of nonconvex, nonsmooth optimization from an oracle complexity perspective, and provided two main results: One (in \secref{sec:near-approximate}) is an impossibility result for efficiently getting near approximately-stationary points, and the second (in \secref{sec:smoothing}) proving an inherent trade-off between oracle complexity and the smoothness parameter when smoothing nonsmooth functions. The second result also establishes the optimality of randomized smoothing as an efficient smoothing method, under mild assumptions.

Our work leaves open several questions. First, at a more technical level, there is the question of whether some or all of our assumptions in \secref{sec:smoothing} can be relaxed. The result currently requires the algorithm to be translation invariant w.r.t. constant functions, as well as querying at some bounded distance from the input point $\bx$. We conjecture that the translation invariance assumption can be relaxed, possibly by a suitable reduction that shows that any smoothing algorithm can be converted to a translation invariant one. However, how to formally perform this remains unclear at the moment. As to the bounded distance of the queries, it is currently an essential assumption for our proof technique, which relies on a function which looks ``locally'' constant at many different points, but is globally non-constant, and this can generally be determined by querying far enough away from the input point (even along some random direction). Thus, relaxing this assumption may necessitate a different proof technique. 

Another open question is whether randomized smoothing can be ``derandomized'': Our results indicate that the gradient Lipschitz parameter of the smooth approximation cannot be improved, but leave open the possibility of an efficient method returning the actual gradients of some smooth approximation (up to machine precision), in contrast to randomized smoothing which only provides noisy stochastic estimates of the gradient. These can then be plugged into smooth optimization methods which assume access to the exact gradients (rather than noisy stochastic estimates), generally improving the resulting iteration complexity. We note that naively computing the exact gradient of $\tilde{f}(\cdot)$ arising from randomized smoothing is infeasible in general, as it involves a high-dimensional integral.

At a more general level, our work leaves open the question of what is the ``right'' metric to study for nonsmooth-nonconvex optimization, where neither minimizing optimization error nor finding approximately-stationary points is feasible. In this paper, we show that the goal of getting near approximately stationary points is not feasible, at least in the worst case, whereas smoothing can be done efficiently, but not in a dimension-free manner. Can we find other compelling goals to consider? One very appealing notion is the $(\delta,\epsilon)$-stationarity of \citet{zhang2020complexity} that we mentioned in the introduction, which comes with clean, finite-time and dimension-free guarantees. Our negative result in \thmref{thm:main} provides further motivation to consider it, by showing that a natural variation of this notion will not work. However, as we discuss in Appendix \ref{app:de-stationarity}, we need to accept that this stationarity notion can have unexpected behavior, and there exist cases where it will not resemble a stationary point in any intuitive sense. In any case, using an oracle complexity framework to study this and other potential metrics for nonsmooth nonconvex optimization, which combine computational efficiency and finite-time guarantees, remains an interesting direction for future research.

\subsection*{Acknowledgements}
This research is supported by the European Research Council (ERC) grant 754705.

\bibliographystyle{plainnat}
\bibliography{bib}

\appendix

\section{$(\delta,\epsilon)$-Stationarity \citep{zhang2020complexity}}\label{app:de-stationarity}

In the recent work by Zhang, Lin, Jegelka, Sra and Jadbabaie \citep{zhang2020complexity}, the authors prove that for nonconvex nonsmooth functions, finding $\epsilon$-approximately stationary points is infeasible in general. Instead, they study the following relaxation (based on the notion of $\delta$-differential introduced by \citealt{goldstein1977optimization}): Letting $\partial f(\bx)$ denote the generalized gradient set (as defined in \secref{sec:preliminaries}) of $f(\cdot)$ at $\bx$, we say that a point $\bx$ is a \emph{$(\delta,\epsilon)$-stationary point}, if
\begin{equation}\label{eq:destat}
\min\{\norm{\bu}:\bu\in \text{conv}\{\cup_{\by:\norm{\by-\bx}\leq \delta}~\partial f(\by)\}\}~\leq~\epsilon~,
\end{equation}
where $\text{conv}\{\cdot\}$ is the convex hull. In words, there exists a convex combination of gradients at a $\delta$-neighborhood of $\bx$, whose norm is at most $\epsilon$. Remarkably, the authors then proceed to provide a dimension-free, gradient-based algorithm for finding $(\delta,\epsilon)$-stationary points, using $\Ocal(1/\delta\epsilon^3)$ gradient and value evaluations, as well as study related settings. Subsequently, several works have continued the study of optimization in terms of $(\delta,\epsilon)$-stationary points \citep{davis2021gradient,tian22a}.

Although this constitutes a very useful algorithmic contribution to nonsmooth optimization, it is important to note that a $(\delta,\epsilon)$-stationary point $\bx$ (as defined above) \emph{does not} imply that $\bx$ is $\delta$-close to an $\epsilon$-stationary point of $f(\cdot)$, nor that $\bx$ necessarily resembles a stationary point. Intuitively, this is because the convex hull of the gradients might contain a small vector, without any of the gradients being particular small. This is formally demonstrated in the following proposition:

\begin{proposition}\label{prop:stat}
	For any $\delta>0$, there exists a differentiable function $f(\cdot)$ on $\reals^2$ which is $2\pi$-Lipschitz on a ball of radius $2\delta$ around the origin, and the origin is a $(\delta,0)$-stationary point, yet $\min_{\bx:\norm{\bx}\leq \delta}\norm{\nabla f(\bx)}\geq 1$.
\end{proposition}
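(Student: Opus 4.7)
The plan is to construct an explicit differentiable $f:\reals^2\to\reals$ whose gradient on $B(\mathbf{0},\delta)$ has norm at least $1$ everywhere, yet whose directions sweep out at least a semicircle of $S^1$ as $\bx$ ranges over $B(\mathbf{0},\delta)$. From this, the $(\delta,0)$-stationarity of the origin follows at once: since $f$ is differentiable we have $\partial f(\by)=\{\nabla f(\by)\}$, and a connected subset of directions covering at least $\pi$ radians is not contained in any open half-plane, so its convex hull contains $\mathbf{0}$. A rescaling of the form $f(\bx)\mapsto(\delta'/\delta)\cdot f(\delta\bx/\delta')$ preserves all required properties, so it suffices to carry out the construction for one fixed value of $\delta$, say $\delta=1$.

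For the construction, the motivating model is the distance function $\bx\mapsto\|\bx\|$, whose gradient $\bx/\|\bx\|$ already has unit norm and sweeps the entire unit circle as $\bx$ ranges over any ball around the origin; the only obstacle is non-differentiability at $\mathbf{0}$. I would overcome this by taking $f$ of the form
\[
f(\bx)\;=\;\langle\bu,\bx\rangle+\|\bx\|^2\,\omega(\bx),
\]
with $\bu$ a fixed vector of norm at least $1$ and $\omega:\reals^2\to\reals$ a bounded oscillating function chosen so that the gradient of the second term adds an angular component spanning more than a half-plane as $\bx$ traverses small circles about the origin. Differentiability at $\mathbf{0}$ holds because the second term is $o(\|\bx\|)$, yielding $\nabla f(\mathbf{0})=\bu$. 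The amplitude, frequency and angular structure of $\omega$, together with $\|\bu\|$, are then tuned so that $\|\nabla f(\bx)\|\geq 1$ on $B(\mathbf{0},\delta)$ (triangle-inequality estimates of the linear part against the oscillatory contribution) and $\|\nabla f(\bx)\|\leq 2\pi$ on $B(\mathbf{0},2\delta)$ (direct bound on each summand of $\nabla f$). Once $f$ is in hand, the $(\delta,0)$-stationarity is verified by exhibiting finitely many points $\bx_1,\ldots,\bx_k\in B(\mathbf{0},\delta)$ whose gradients span directions covering a semicircle, giving a convex combination equal to $\mathbf{0}$.

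The main obstacle is the simultaneous satisfaction of the lower bound $\|\nabla f\|\geq 1$ and the angular-spread requirement. For a $C^1$ function, continuity of $\nabla f$ would force gradients near the origin to cluster around $\nabla f(\mathbf{0})$, which if of norm $\geq 1$ precludes spanning more than a half-plane in any small neighborhood. The way out is that the proposition only requires differentiability, not continuous differentiability; just as in the one-dimensional example $x\mapsto x^2\sin(1/x)$, one can have $\nabla f$ discontinuous at the origin while $f$ itself remains differentiable there. The delicate technical task is designing $\omega$ — in both its magnitude and its angular dependence — so that $\nabla f$ traverses more than half of $S^1$ arbitrarily near $\mathbf{0}$ while never dropping below norm $1$, and so that the resulting Lipschitz constant on $B(\mathbf{0},2\delta)$ stays within $2\pi$. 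This is exactly the trade-off that makes the example instructive: it demonstrates that one cannot weaken the $(\delta,\epsilon)$-stationarity notion to ``close to a point with small gradient'' without changing the theory.
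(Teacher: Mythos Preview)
Your proposal rests on a misdiagnosis of the obstacle, and as a result is both more complicated than needed and incomplete.

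You claim that a $C^1$ function cannot work, because continuity of $\nabla f$ forces gradients near the origin to cluster around $\nabla f(\mathbf{0})$, precluding them from spanning more than a half-plane. But the proposition fixes $\delta>0$ and allows $f$ to depend on $\delta$; the gradients only need to have $\mathbf{0}$ in their convex hull over the \emph{fixed} ball $B(\mathbf{0},\delta)$, not over arbitrarily small neighborhoods of the origin. Your clustering argument constrains $\nabla f$ only in some tiny ball around $\mathbf{0}$, not on all of $B(\mathbf{0},\delta)$. The apparent obstacle is an artifact of your ansatz $f(\bx)=\langle\bu,\bx\rangle+\|\bx\|^2\omega(\bx)$, which forces the function to be ``linear plus lower order'' near $\mathbf{0}$; that is a limitation of this particular template, not of the problem.

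The paper's proof avoids all of this with the explicit $C^\infty$ function
\[
f(u,v)=(2\delta+u)\sin\!\left(\frac{\pi}{2\delta}v\right),
\]
whose gradient is computed in closed form. At $(0,\delta)$ and $(0,-\delta)$ the gradients are $(1,0)$ and $(-1,0)$, whose average is $\mathbf{0}$, giving $(\delta,0)$-stationarity immediately. A direct estimate on $\|\nabla f(u,v)\|^2=\sin^2(\cdot)+(\pi/2\delta)^2(2\delta+u)^2\cos^2(\cdot)$ yields the bounds $1\le\|\nabla f\|\le 2\pi$ on the appropriate balls. No discontinuous-gradient or $x^2\sin(1/x)$-type construction is required.

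Beyond the conceptual issue, your proposal is not yet a proof: $\omega$ is never specified, the ``tuning'' of amplitude and frequency is left open, and it is not clear that any choice within your ansatz can simultaneously achieve $\|\nabla f\|\ge 1$ on $B(\mathbf{0},\delta)$ and gradients spanning a half-plane, since the oscillatory contribution you add is deliberately made small near the origin.
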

\begin{figure}\label{fig:stat}
	\centering
	\includegraphics[trim=0cm 1.5cm 0cm 1.5cm,clip=true,width=0.7\linewidth]{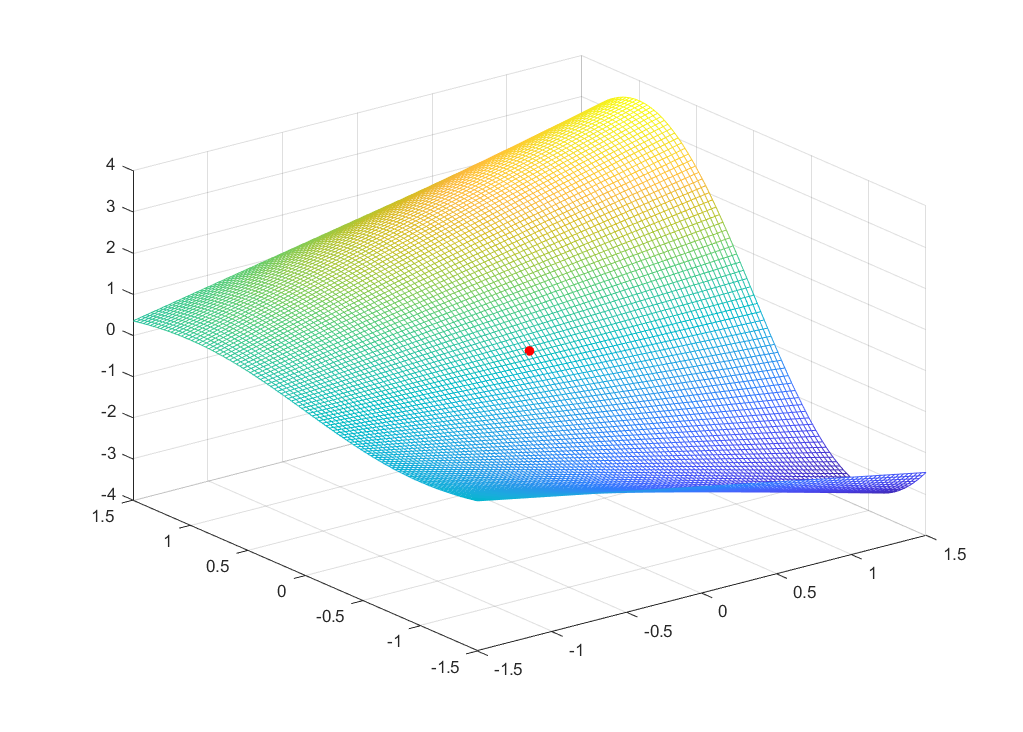}
	\caption{The function used in the proof of Proposition \ref{prop:stat}, for $\delta=1$. The origin (which fulfills the definition of a $(1,0)$-stationary point) is marked with a red dot. Best viewed in color.}
\end{figure}
\begin{proof}
	Fixing some $\delta>0$, consider the function
	\[
	f(u,v)~:=~(2\delta+u)\sin\left(\frac{\pi}{2\delta}v\right)
	\]
	(see \figref{fig:stat} for an illustration). This function is differentiable, and its gradient satisfies
	\[
	\nabla f(u,v)~=~ \left(\sin\left(\frac{\pi}{2\delta}v\right)~,~\frac{\pi}{2\delta}(2\delta+u)\cos\left(\frac{\pi}{2\delta}v\right)\right)~.
	\]
	First, we note that
	\[
	\frac{1}{2}\left(\nabla f(0,\delta)+\frac{1}{2}\nabla f(0,-\delta)\right)~=~
	\frac{1}{2}\left((1,0)+(-1,0)\right)~=~(0,0),
	\]
	which implies that $(0,0)$ is in the convex hull of the gradients at a distance at most $\delta$ from the origin, hence the origin is a $(\delta,0)$-stationary point. Second, we have that
	\begin{equation}\label{eq:nablaf2}
	\norm{\nabla f(u,v)}^2 = \sin^2\left(\frac{\pi}{2\delta}v\right)+\left(\frac{\pi}{2\delta}\right)^2(2\delta+u)^2\cos^2\left(\frac{\pi}{2\delta}v\right)~.
	\end{equation}
	For any $(u,v)$ of norm at most $2\delta$, we must have $|u|\leq 2\delta$, and therefore the above is at most
	\begin{align*}
	\sin^2\left(\frac{\pi}{2\delta}v\right)+\left(\frac{\pi}{2\delta}\right)^2(2\delta+2\delta)^2\cos^2\left(\frac{\pi}{2\delta}v\right)~\leq~ 4\pi^2\left(\sin^2\left(\frac{\pi}{2\delta}v\right)+\cos^2\left(\frac{\pi}{2\delta}v\right)\right)
	~=~ 4\pi^2~,
	\end{align*}
	which implies that the function is $2\pi$-Lipschitz on a ball of radius $2\delta$ around the origin. Finally, for any $(u,v)$ of norm at most $\delta$, we have $|u|\leq \delta$, so \eqref{eq:nablaf2} is at least
	\[
	\sin^2\left(\frac{\pi}{2\delta}v\right)+\left(\frac{\pi}{2\delta}\right)^2(2\delta-\delta)^2\cos^2\left(\frac{\pi}{2\delta}v\right)~\geq~ \sin^2\left(\frac{\pi}{2\delta}v\right)+\cos^2\left(\frac{\pi}{2\delta}v\right)~=~1~.
	\]
\end{proof}

\begin{remark}[Extension to globally Lipschitz functions]
Although the function $f(\cdot)$ in the proof has a constant Lipschitz parameter only close to the origin, it can be easily modified to be globally Lipschitz and bounded, for example by considering the function
\[
\tilde{f}(\bx) ~=~ \begin{cases} f(\bx)& \norm{\bx}\leq 2\delta\\ \max\left\{0,2-\frac{\norm{\bx}}{2\delta}\right\}\cdot f\left(\frac{2\delta}{\norm{\bx}}\bx\right) & \norm{\bx}>2\delta\end{cases}~,
\]
which is identical to $f(\cdot)$ in a ball of radius $2\delta$ around the origin, but decays to $0$ for larger $\bx$, and can be verified to be globally bounded and Lipschitz independent of $\delta$. 
\end{remark}

\begin{remark}[Extension to constant distances]\label{remark:propstrong}
    The proof of \thmref{thm:main} uses a (more complicated) construction that actually strengthens Proposition \ref{prop:stat}: It implies that for any $\delta,\epsilon$ smaller than some constants, there is a Lipschitz, bounded-from-below function on $\reals^d$, such that the origin is $(\delta,0)$-stationary, yet there are no $\epsilon$-stationary points even at a constant distance from the origin. In more details, consider the function 
	\begin{gather*}
	\hat{g}_{\bw}(\bx):=\max\{g_{\bw}(\mathbf{0})-1~,~g_{\bw}(\bx)\}~,
	\\
	g(\bx)_{\bw}=|x_d|+\frac{1}{4}\sqrt{\sum_{i=1}^{d-1}x_i^{2}}-\left[\langle\overline{\bw},\bx+\bw\rangle-\frac{1}{2}\|\bx+\bw\|\right]_{+}~.
	\end{gather*}
	Using exactly the same proof as for \lemref{lem: f_w properties}, one can show that
	$g_{\bw}(\cdot)$ is $\frac{15}{4}$-Lipschitz and has no $\epsilon$-stationary points for $\epsilon < 1/4\sqrt{2}$. Therefore, it is easily verified that for any $\bw$, $\hat{g}_{\bw}(\cdot)$ is $\frac{15}{4}$-Lipschitz, bounded from below, and any $\epsilon$-stationary point is at a distance of at least $4/15$ from the origin.\footnote{The last point follows from the fact that if $\by$ is an $\epsilon$-stationary point of $\hat{g}_{\bw}(\cdot)$, then we can find a point $\bx$ arbitrarily close to $\by$ such that $\hat{g}_{\bw}(\bx)\neq g_{\bw}(\bx)$, hence $g_{\bw}(\bx)< g_{\bw}(\mathbf{0})-1$, and as a result $g_{\bw}(\mathbf{0})-f_{\bw}(\bx)>1$. But $g_{\bw}(\cdot)$ is $\frac{15}{4}$-Lipschitz, hence $\norm{\bx}> 4/15$, and therefore $\norm{\by}\geq 4/15$.} However, we also claim that the origin is a $(\delta,0)$-stationary point for any $\delta\in \left(0,4/15\right)$. To see this, note first that for such $\delta$, by the Lipschitz property of $g_{\bw}(\cdot)$, we have $\hat{g}_{\bw}(\bx)=g_{\bw}(\bx)$ in a $\delta$-neighborhood of the origin. Fix any $\bw$ such that $\norm{\bw}=\frac{\delta}{2},\,w_d=0$, and let $\bv\in\{-\delta\cdot\mathbf{e}_d,\delta\cdot\mathbf{e}_d\}$. It is easily verified that $\inner{\bar{\bw},\bv+\bw}-\frac{1}{2}\norm{\bv+\bw}<0$, in which case
\[
\sign{(v_d)}\cdot\mathbf{e}_d \in
\partial g_{\bw}(\bv)~=~
\partial \hat{g}_{\bw}(\bv)~,
\]
and therefore $\frac{1}{2}\left(\nabla \hat{g}_{\bw}(\bv)+\nabla \hat{g}_{\bw}(-\bv)\right)=\mathbf{0}$, where $\nabla \hat{g}_{\bw}(\bv)$ denotes the subgradient defined above.
\end{remark}

We end by noting that if we drop the the $\text{conv}\{\cdot\}$ operator from the definition of $(\delta,\epsilon)$-stationarity in \eqref{eq:destat}, the goal becomes equivalent to finding points which are $\delta$-close to $\epsilon$-approximately stationary points -- which is exactly the goal we study in \secref{sec:near-approximate}, and for which we show a strong impossibility result. This impossibility result implies that a natural strengthening of the notion of $(\delta,\epsilon)$-stationarity is  already too strong to be feasible in general.

\section{Runtime of smoothed GD suffers from a dimension dependency} \label{app: smoothed GD dimension}

In this appendix, we formally prove that randomized smoothing can indeed lead to strong dimension dependencies in the iteration complexity of simple gradient methods -- in particular, vanilla gradient descent with constant step size --  even for simple convex functions. Thus, the dimension dependency arising from applying gradient descent on a randomly-smoothed function is real and not merely an artifact of the analysis (where the standard upper bound on the number of iterations scales with the gradient Lipschitz parameter). We note that we focus on constant step-size gradient descent for simplicity, and a similar analysis can be performed for other gradient-based methods, such as variable step-size gradient descent or stochastic gradient descent. 

Given a 1-Lipschitz function $f:\reals^d\to\reals$,
denote the smooth approximation $\tilde{f}(\bx)=\E_{\|\bv\|\leq1}[f(\bx+\epsilon\bv)]$ where $\bv$ is distributed uniformly over the unit ball. Let $\bx_0$ be a point which is of distance at most 1 to an $\epsilon$-stationary point of $\tilde{f}$, and consider vanilla gradient descent with a constant step size $\eta>0$:
\[
\bx_{t+1}=\bx_{t}-\eta\cdot \nabla\tilde{f}\left(\bx_{t}\right)~.
\]
The following proposition shows that for any step size, applying gradient descent to find an approximately-stationary point of $\tilde{f}$ will necessitate a number of iterations scaling strongly with the dimension:
\begin{proposition} \label{prop:smoothed GD dimension}
    There exists a 1-Lipschitz function $f:\reals^d\to\reals$ such that the following holds: For any $\epsilon<\frac{1}{2},~\eta>0$, there exists $\bx_0$ as above such that
    $\min\{t:\|\nabla\tilde{f}(\bx_t)\|\leq\epsilon\}=\Omega\left(\frac{\sqrt{d}}{\epsilon}\right)$.
\end{proposition}
\begin{proof}
    We will show the claim holds for $f\left(\bx\right):=\left|x_1\right|$. 
    In a nutshell, the proof is based on the observation that $\nabla \tilde{f}(\bx)$ is close to zero only when $|x_1|=\Ocal(1/\sqrt{d})$. Thus, gradient descent must hit an interval of size $\Ocal(1/\sqrt{d})$. But in order to guarantee this, and with an arbitrary bounded starting point, the step size must be small, and hence the number of iterations required will be large. 
    
    Proceeding with the formal proof, note that $\tilde{f}\left(\bx\right)=\E_{\|\bv\|\leq1}\left[\left|x_1+\epsilon v_1\right|\right]$, hence
    \begin{align}
        \nabla\tilde{f}\left(\bx\right)
        &=\E_{\|\bv\|\leq1}\left[\sign\left(x_1+\epsilon v_1\right)\right]\cdot\mathbf{e}_1 \nonumber\\
        &=\left(\Pr_{\|\bv\|\leq1}\left[x_1+\epsilon v_1>0\right]-\Pr_{\|\bv\|\leq1}\left[x_1+\epsilon v_1<0\right]\right)\cdot\mathbf{e}_1 \nonumber\\
        &=\left(1-2\cdot\Pr_{\|\bv\|\leq1}\left[x_1+\epsilon v_1<0\right]\right)\cdot\mathbf{e}_1 \nonumber\\
        &=\left(1-2\cdot\Pr_{\|\bv\|\leq1}\left[v_1<-\frac{x_1}{\epsilon}\right]\right)\cdot\mathbf{e}_1
        ~.
        \label{eq: smoothed grad}
    \end{align}
    We draw several consequences from \eqref{eq: smoothed grad}. First, if $x_1=0$ then $\Pr_{\|\bv\|\leq1}\left[v_1<-\frac{x_1}{\epsilon}\right]=\frac{1}{2}$ due to symmetry around the origin, so in particular
    \begin{equation} \label{eq: 0 global min}
        \nabla\tilde{f}\left(\mathbf{0}\right)=\mathbf{0}~.
    \end{equation}
    Second, if $x_1\geq\epsilon$ then $\Pr_{\|\bv\|\leq1}\left[v_1<-\frac{x_1}{\epsilon}\right]=0$, and if $x_1\leq-\epsilon$ then $\Pr_{\|\bv\|\leq1}\left[v_1<-\frac{x_1}{\epsilon}\right]=1$. Overall
    \begin{equation} \label{eq: grad=1 far from zero}
        \left|x_1\right|\geq\epsilon\implies\nabla\tilde{f}\left(\bx\right)=\sign(x_1)\cdot\mathbf{e}_1~.
    \end{equation}
    Third, since probabilities are bounded between zero and one, we obtain the global upper estimate
    \begin{equation} \label{eq: grad norm<=1}
        \left\|\nabla\tilde{f}\left(\bx\right)\right\|\leq1~.
    \end{equation}
    Lastly, $\Pr_{\|\bv\|\leq1}\left[v_1<-\frac{x_1}{\epsilon}\right]$ equals to the volume of the intersection of the halfspace $\{\bv\in\reals^d~|~ v_1<-\frac{x_1}{\epsilon}\}$ with the unit ball, normalized by the unit ball volume. In particular, since this intersection is a subset of the spherical sector associated with the spherical cap $\left\{\bv\in\SSS^{d-1}~\middle|~v_1<-\frac{x_1}{\epsilon}\right\}$, its normalized volume is less then the surface area of the cap. By well known estimates of spherical cap (for example \citealp[Lemma 2.2]{ball1997elementary}):
    \begin{equation} \label{eq: spherical cap}
    \Pr_{\|\bv\|\leq1}\left[v_1<-\frac{x_1}{\epsilon}\right]
    \leq\Pr_{\|\bv\|=1}\left[v_1<-\frac{x_1}{\epsilon}\right]\leq\exp\left(-\frac{d{x}_1^2}{2\epsilon^2}\right)~.
    \end{equation}
    By combining \eqref{eq: smoothed grad} and \eqref{eq: spherical cap} we get
    \[
    \left\|\nabla\tilde{f}\left(\bx\right)\right\|\geq
    1-2\exp\left(-\frac{d{x}_1^2}{2\epsilon^2}\right)~.
    \]
    In particular,
        \begin{equation} \label{eq: large norm}
        \left|x_1\right|\geq\frac{\sqrt{2\log(10)}\epsilon}{\sqrt{d}}
        \implies
        \left\|\nabla\tilde{f}\left(\bx\right)\right\|\geq \frac{4}{5}~.
    \end{equation}
    We are now ready to describe the choice of $\bx_0$ which will prove the claim, depending on the value of $\eta$.
    \subsubsection*{Case I: $\eta\leq\frac{5\sqrt{2\log(10)}\epsilon}{2\sqrt{d}}$}
    We set $\bx_0=\mathbf{e}_1$. First, $\bx_0$ is indeed at distance 1 from $\mathbf{0}$, which by \eqref{eq: 0 global min} is a stationary point. Furthermore, by the definition of gradient descent, \eqref{eq: smoothed grad} and \eqref{eq: grad norm<=1}, for all $t\leq\frac{2\sqrt{d}}{5\sqrt{2\log(10)}\epsilon}-\frac{2}{5}$:
    \begin{align*}
    \left(\bx_{t+1}\right)_1&=\left(\bx_0-\eta\left(\sum_{i=1}^{t}\nabla\tilde{f}\left(\bx_i\right)\right)\right)_1 \\
    &\geq1-\frac{5\sqrt{2\log(10)}\epsilon}{2\sqrt{d}}\cdot t\cdot1
    \\
    &\geq\frac{\sqrt{2\log(10)}\epsilon}{\sqrt{d}}~.
    \end{align*}
    So by \eqref{eq: large norm}, for every $t\leq\frac{2\sqrt{d}}{5\sqrt{2\log(10)}\epsilon}-\frac{2}{5}:\ \left\|\nabla\tilde{f}\left(\bx_{t}\right)\right\|\geq\frac{4}{5}$. Consequently, the minimal $t$ for which the gradient norm is less than $\epsilon$ satisfies $t>\frac{2\sqrt{d}}{5\sqrt{2\log(10)}\epsilon}-\frac{2}{5}=\Omega(\frac{\sqrt{d}}{\epsilon})$.
    \subsubsection*{Case II: $\frac{5\sqrt{2\log(10)}\epsilon}{2\sqrt{d}}<\eta\leq2$}
    In this case, we define the real function
    \[
    \phi\left(s\right):=2s-\eta\left(\nabla\tilde{f}\left(s\cdot\mathbf{e}_1\right)\right)_1
    ~.
    \]
    On on hand, by assumption on $\eta$ and \eqref{eq: large norm}:
    \begin{align*}
    \phi\left(\frac{\sqrt{2\log(10)}\epsilon}{\sqrt{d}}\right)
    &=\frac{2\sqrt{2\log(10)}\epsilon}{\sqrt{d}}-
    \eta\left(\nabla\tilde{f}\left(s\cdot\mathbf{e}_1\right)\right)_1 \\
    &\leq\frac{2\sqrt{2\log(10)}\epsilon}{\sqrt{d}}-\frac{5\sqrt{2\log(10)}\epsilon}{2\sqrt{d}}\cdot\frac{4}{5}\\
    &=0
    ~.
    \end{align*}
    On the other hand, $\frac{\eta}{2}>\frac{5\sqrt{2\log(10)}\epsilon}{4\sqrt{d}}>\frac{\sqrt{2\log(10)}\epsilon}{\sqrt{d}}$ and by \eqref{eq: grad norm<=1}:
    \begin{align*}
    \phi\left(\frac{\eta}{2}\right)
    &=\eta-\eta\left(\nabla\tilde{f}\left(\frac{\eta}{2}\cdot\mathbf{e}_1\right)\right)_1 \\
    &\geq\eta-\eta\cdot1 \\
    &=0
    ~.
    \end{align*}
    Notice that $\phi$ is continuous since $\tilde{f}$ is smooth, so by the intermediate value theorem there exists $s^*\in\left[\frac{\sqrt{2\log(10)}\epsilon}{\sqrt{d}},\frac{\eta}{2}\right]$ such that $\phi\left(s^*\right)=0$. Equivalently,
    \begin{equation} \label{eq: s^*}
        s^*-\eta\left(\nabla\tilde{f}\left(s\cdot\mathbf{e}_1\right)\right)_1=-s^*~.
    \end{equation}
    We set $\bx_0=s^*\mathbf{e}_1$. First, $\bx_0$ is of distance at most $\frac{\eta}{2}\leq1$ from $\mathbf{0}$, which by \eqref{eq: 0 global min} is a stationary point. Furthermore, by the definition of gradient descent and \eqref{eq: s^*} we get
    \[
    \bx_1=s^*\mathbf{e}_1-\eta\nabla\tilde{f}\left(s^*\mathbf{e}_1\right)=-s^*\mathbf{e}_1=-\bx_0~.
    \]
    Inductively, due to the symmetry of $\tilde{f}$ with respect to the origin, we obtain $\bx_t=(-1)^{t}\bx_0$. In particular, since $s^*\geq\frac{\sqrt{2\log(10)}\epsilon}{\sqrt{d}}$ \eqref{eq: large norm} ensures that for all $t\in\mathbb{N}:\ \left\|\nabla\tilde{f}\left(\bx_t\right)\right\|\geq\frac{4}{5}>\epsilon$.
    \subsubsection*{Case III: $\eta>2$}
    Set $\bx_0=\mathbf{e}_1$, which satisfies the distance assumption as explained in case I. By the definition of gradient descent and \eqref{eq: grad=1 far from zero}:
    \[
    \bx_1=\mathbf{e}_1-\eta\nabla\tilde{f}\left(\mathbf{e}_1\right)
    =\left(1-\eta\right)\mathbf{e}_1~.
    \]
    Notice that $\left(1-\eta\right)<-1$, so by invoking \eqref{eq: grad=1 far from zero} we get
    \[
    \bx_2=\bx_1-\eta\nabla\tilde{f}\left(\bx_1\right)
    =\left(1-\eta\right)\mathbf{e}_1+\eta\mathbf{e}_1=\bx_0~.
    \]
    We deduce that for all $t\in\mathbb{N}:\ \bx_{t+2}=\bx_t$, and in particular by \eqref{eq: grad=1 far from zero}: $\left\|\nabla\tilde{f}\left(\bx_t\right)\right\|=1>\epsilon$.
\end{proof}

\section{Proof of \propref{prop: hard 1dim}} \label{app: 1dim hardness proof}
Denote by $\Hcal$ the set of non-negative $2$-Lipschitz functions $h$ such that $h(0)=1$, $x^*:=\arg\min_{x\in\reals}h(x)\in(0,1)$ is unique, and $\forall x\neq x^*~\forall g\in\partial h(x):|g|\geq1$.
We start by showing that if the proposition does not hold, then there exists an algorithm that finds the minimum of any function in $\Hcal$ within some finite time, with high probability. We will use this implication in order to obtain a contradiction.

Assume by contradiction that \propref{prop: hard 1dim} does not hold. That is, that there exist $\Acal,T,\delta$ such that for any $h\in\Hcal,\rho>0$,
\begin{equation} \label{eq: contradiction assumption}
   \Pr_{\Acal}\left[\min_{t\in[T]}|x_t^{h}-x^*| < \rho\right]\geq\delta~. 
\end{equation}
Let $(\rho_n)_{n=1}^{\infty}>0$ be a decreasing sequence such that $\lim_{n\to\infty}\rho_{n}=0$. By continuity of probability measures with respect to decreasing events, assuming \eqref{eq: contradiction assumption} for any $\rho>0$ implies
\begin{align} \label{eq: exist T,delta}
   \Pr_{\Acal}\left[\exists t\in[T]:x_t^{h}=x^*\right]
   &=\Pr_{\Acal}\left[\min_{t\in[T]}|x_t^{h}-x^*| =0\right]
   =\Pr_{\Acal}\left[\bigcap_{n=1}^{\infty}\min_{t\in[T]}|x_t^{h}-x^*| <\rho_n\right] \nonumber\\
   &=\lim_{n\to\infty}\Pr_{\Acal}\left[\min_{t\in[T]}|x_t^{h}-x^*| <\rho_n\right]
   \geq\delta~. 
\end{align}
Namely, there exists some algorithm $\Acal$ that gets to the exact minimum of any function in $\Hcal$ within some finite time $T$, with some positive probability $\delta$. But note that a classic confidence boosting argument shows that if \eqref{eq: exist T,delta} holds for \emph{some} $0<\delta<1$, then it actually holds for \emph{any} $0<\delta<1$ (with an appropriate blow up in running time). Indeed, assuming it is true for some $\delta_{0},\Acal_0,T_0$, we define an algorithm $\Acal$ which simulates $N$ independent copies of $\Acal_{0}$, and returns the point with the smallest function value over all seen iterates along all the independent copies. By standard Chernoff-Hoeffding bounds one easily gets that for $N$ being some function of $\delta_0$ and $\delta$, $\Acal$ satisfies \eqref{eq: exist T,delta} for $T=N\cdot T_{0}$. Hence,
\begin{equation} \label{eq: any delta}
    \forall\delta<1
    ~\exists\Acal_{\delta},T_{\delta}
    ~\forall h\in\Hcal~:~
    \Pr_{\Acal_{\delta}}\left[\exists t\in[T]:x_t^{h}=x^*\right]\geq\delta~.
\end{equation}

We fix $\delta=\frac{1}{2}$, and let $\Acal,T_{0}$ be it's associated algorithm and iteration number.
By Yao's lemma \citep{yao1977probabilistic}, we can assume $\Acal$ is deterministic and provide a distribution over hard functions.
Namely, in order to arrive at a contradiction it is enough to show that
\begin{equation} \label{eq: Yao hard distribution}
\Pr_{\sigma}\left[\exists t\in[T_{0}]:x_{t}^{h_\sigma}=x^*\right]<\frac{1}{2}~,
\end{equation}
for some distribution over $\sigma$, such that $\forall\sigma:h_\sigma\in\Hcal$.

Before delving into the technical details, we turn to explain the intuition behind the construction. We consider functions $h^{N}_{\sigma}$ indexed by $\sigma\in\{0,1\}^N,N\in\NN$. The function $h_{\sigma_1}^{1}$ ``tilts to the left'' if $\sigma_1=0$, and ``tilts to the right'' if $\sigma_1=1$ (see \figref{fig:h_1dim_hard}). Given these two functions, we define the functions for $N=2$, such that $\sigma_1$ determines an ``outer'' tilt, while $\sigma_2$ determines an ``inner'' tilt which behaves like $h^1_{\sigma_2}$ (once again, see \figref{fig:h_1dim_hard}). These functions are such that for any point outside the outer tilted segment, it's value does not depend on the inner tilt - that is, on $\sigma_2$. We continue this process recursively for any $N\in\NN$, such that the larger $i$ is, $\sigma_i$ determines finer tilts in smaller segments. The construction has the property that for all points outside the tilt determined by $\sigma_i$, the function's values do not depend on $\sigma_{i+1},\dots,\sigma_{N}$. Thus, by setting $N$ large enough relatively to $T$, we will be able to ensure that $x_1,\dots,x_T$ are not likely to depend on $\sigma_{N}$, therefore missing the minimum which does depend on it. To that end, we define the following functions:

\begin{center}
\begin{tabular}{ c c c }
$h_{0}^{1}\left(x\right)=
    \begin{cases}
    1-x & x\in\left(-\infty,0\right)\\
    1-2x & x\in\left[0,\frac{3}{8}\right]\\
    \frac{6}{5}x-\frac{1}{5} & x\in\left[\frac{3}{8},1\right]\\
    x & x\in\left(1,\infty\right)
    \end{cases}$
 &
 ,
 &
 $h_{1}^{1}\left(x\right)=
    \begin{cases}
    1-x & x\in\left(-\infty,0\right)\\
    -\frac{6}{5}x+1 & x\in\left[0,\frac{5}{8}\right]\\
    2x-1 & x\in\left[\frac{5}{8},1\right]\\
    x & x\in\left(1,\infty\right)
    \end{cases}$
    ~~.
\end{tabular}
\end{center}
Next, in a recursive manner, for any $\hat{\sigma}:=(\sigma_2,\dots,\sigma_{N})\in\{0,1\}^{N-1}$ we define (see \figref{fig:h_1dim_hard}):

\begin{align*}
h_{0,\hat{\sigma}}^{N}\left(x\right)&=\begin{cases}
    1-x & x\in\left(-\infty,0\right)\\
    1-2x & x\in\left[0,\frac{1}{4}\right]\\
    \frac{1}{4}h_{\hat{\sigma}}^{(N-1)}\left(4x-1\right)+\frac{1}{4} & x\in\left[\frac{1}{4},\frac{1}{2}\right]\\
    x & x\in\left[\frac{1}{2},1\right]\\
    x & x\in\left(1,\infty\right)
    \end{cases}
    ~,
    \\
h_{1,\hat{\sigma}}^{N}\left(x\right)&=
    \begin{cases}
    1-x & x\in\left(-\infty,0\right)\\
    1-x & x\in\left[0,\frac{1}{2}\right]\\
    \frac{1}{4}h_{\hat{\sigma}}^{(N-1)}\left(4x-2\right)+\frac{1}{4} & x\in\left[\frac{1}{2},\frac{3}{4}\right]\\
    2x-1 & x\in\left[\frac{3}{4},1\right]\\
    x & x\in\left(1,\infty\right)
    \end{cases}
    ~.
\end{align*}

\begin{figure}
	\includegraphics[trim=3cm 8cm 1cm 8.5cm,clip=true,width=0.5\linewidth]{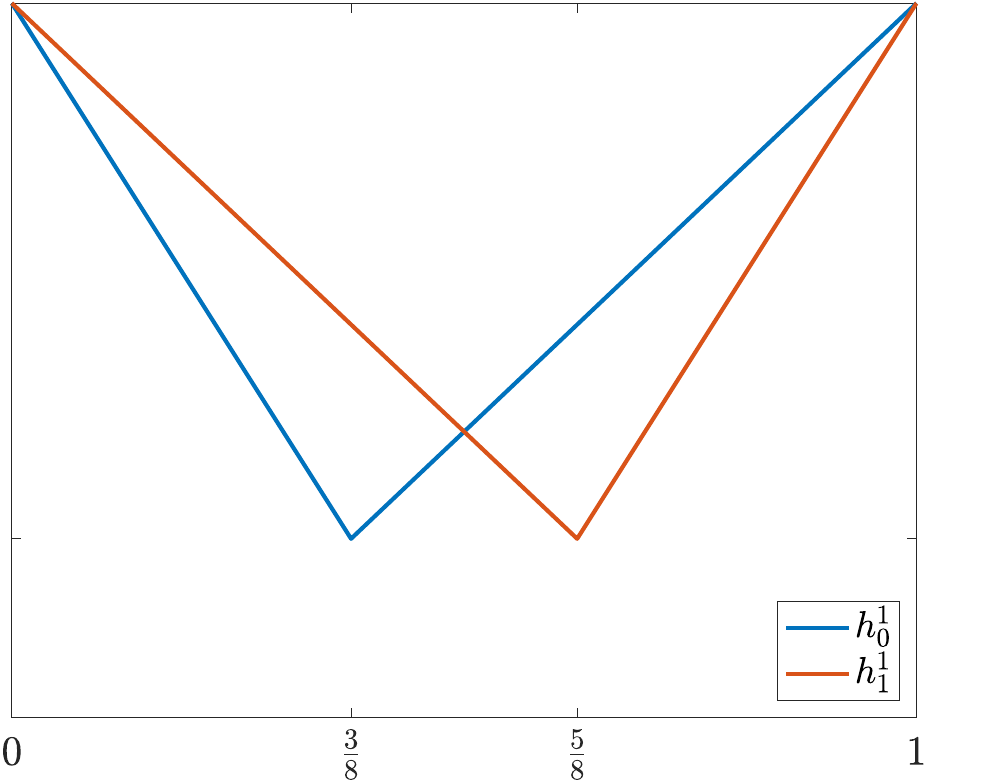}%
	\includegraphics[trim=3cm 8cm 1cm 8.5cm,clip=true,width=0.5\linewidth]{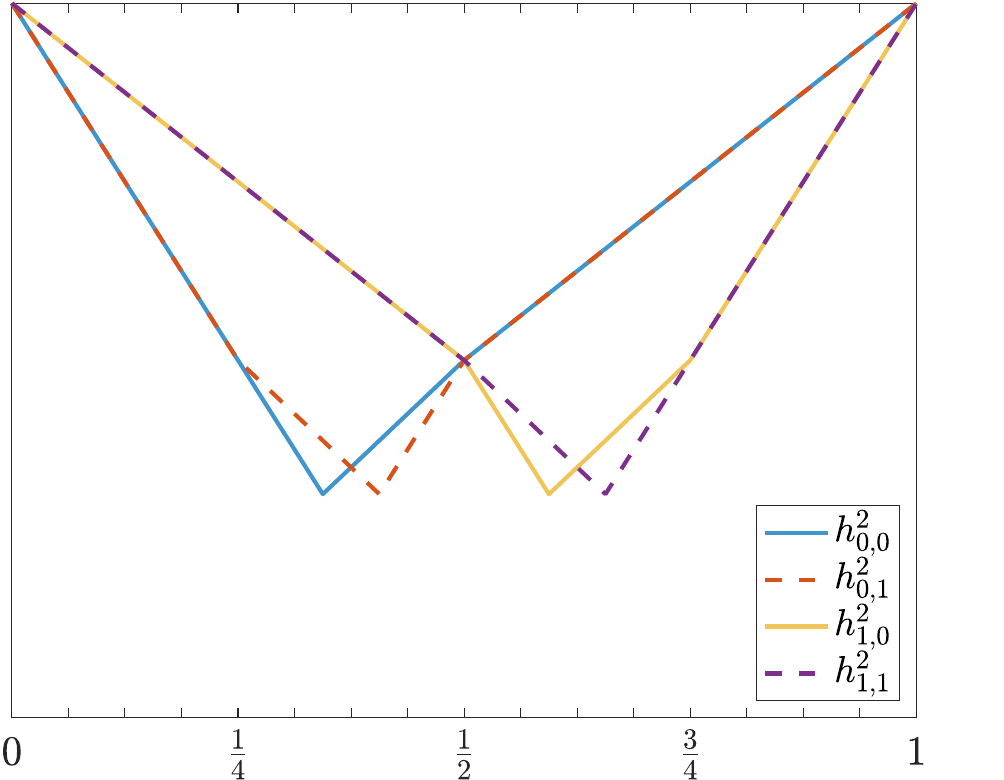}
	\caption{Plots of the functions $h^{N}_{\sigma},\sigma\in\{0,1\}^N$, $N=1$ (on the left) and $N=2$ (on the right).}
	\label{fig:h_1dim_hard}
\end{figure}

\begin{lemma}
For any $N\in\NN$, it holds that for all $\sigma\in\{0,1\}^{N}:~h^{N}_{\sigma}\in\Hcal$.
\end{lemma}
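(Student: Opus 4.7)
The plan is to prove this by induction on $N$. For the base case $N=1$, I would verify each property directly for $h_0^1$ and $h_1^1$: both are piecewise linear, with slopes drawn from $\{-2,-1,-6/5,1,6/5,2\}\subset [-2,-1]\cup[1,2]$, which immediately gives the $2$-Lipschitz property and the $|g|\geq 1$ lower bound at every point that is interior to a linear piece. At the ``kinks'' between pieces the Clarke subdifferential is the convex hull of the two one-sided slopes, and a quick check shows that at every kink except the unique minimum, both one-sided slopes have the same sign and magnitude $\geq 1$, so the convex hull stays bounded away from $0$. Direct computation gives $h_0^1(0)=h_1^1(0)=1$, a unique minimum $x^*=3/8$ for $h_0^1$ (value $1/4$) and $x^*=5/8$ for $h_1^1$ (value $1/4$), both in $(0,1)$, and nonnegativity is then immediate since on the outer pieces the function is $\geq 1$.

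For the inductive step, assume $h_{\hat\sigma}^{N-1}\in\Hcal$ for every $\hat\sigma\in\{0,1\}^{N-1}$; I will argue for $h_{0,\hat\sigma}^{N}$ since $h_{1,\hat\sigma}^{N}$ is symmetric. First, I would record the useful fact $h_{\hat\sigma}^{N-1}(1)=1$, which follows from the construction (the outer right piece is $x\mapsto x$ past $x=1$, so continuity at $x=1$ forces value $1$). Continuity of $h_{0,\hat\sigma}^N$ at the boundaries $x=1/4$ and $x=1/2$ then reduces to the identities $\tfrac{1}{4}h_{\hat\sigma}^{N-1}(0)+\tfrac{1}{4}=\tfrac12$ and $\tfrac{1}{4}h_{\hat\sigma}^{N-1}(1)+\tfrac{1}{4}=\tfrac12$, both of which hold. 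The $2$-Lipschitz property is preserved on the middle piece because $\frac{d}{dx}\bigl[\tfrac{1}{4}h_{\hat\sigma}^{N-1}(4x-1)+\tfrac{1}{4}\bigr]=(h_{\hat\sigma}^{N-1})'(4x-1)$, whose magnitude is $\leq 2$ by induction, and the other pieces already have slope magnitude in $\{1,2\}$.

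The lower-gradient bound needs the most care. Inside each linear piece it holds by inspection (slopes $-1,-2,1$) or by the inductive hypothesis on the middle piece. At the three interior kinks $x=0,\tfrac14,\tfrac12$ I would explicitly describe the convex hull of one-sided slopes and verify it avoids $0$: for instance, at $x=\tfrac14$ the left slope is $-2$ and the right slope is $(h_{\hat\sigma}^{N-1})'(0^+)$, which has magnitude $\geq 1$ and sign $-1$ (since $h_{\hat\sigma}^{N-1}(0)=1$ strictly exceeds the minimum value of $h_{\hat\sigma}^{N-1}$ at some $x^*_{N-1}>0$), so the hull is a negative interval bounded above by $-1$. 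An analogous argument handles $x=0$ and $x=\tfrac12$. On the middle piece, $|g|\geq 1$ fails only where $4x-1=x^*_{N-1}$, by induction.

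Finally, for the unique minimum in $(0,1)$: on the middle piece, $h_{0,\hat\sigma}^N$ is an affine rescaling of $h_{\hat\sigma}^{N-1}$, so by induction it attains a unique minimum at $x=(x^*_{N-1}+1)/4\in(\tfrac14,\tfrac12)$ with value $\tfrac{1}{4}h_{\hat\sigma}^{N-1}(x^*_{N-1})+\tfrac14<\tfrac12$ (since $h_{\hat\sigma}^{N-1}(x^*_{N-1})<h_{\hat\sigma}^{N-1}(0)=1$). Comparing with the other pieces, whose infimum over $\reals$ is $\tfrac12$, attained only at $x=\tfrac14$ and $x=\tfrac12$, shows the global minimum is unique and lies in $(\tfrac14,\tfrac12)\subset(0,1)$. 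Nonnegativity and $h_{0,\hat\sigma}^N(0)=1$ are read off directly from the definition. The main obstacle will be the boundary-point bookkeeping in the third step, since the definition of $\Hcal$ requires the lower-gradient bound over the whole Clarke subdifferential and a naive check of just the one-sided slopes would miss the intermediate convex-combination points.
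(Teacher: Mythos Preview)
Your proposal is correct and follows essentially the same inductive approach as the paper: induction on $N$, checking continuity at the junction points $x=\tfrac14,\tfrac12$ via $h_{\hat\sigma}^{N-1}(0)=h_{\hat\sigma}^{N-1}(1)=1$, the chain-rule identity $\partial\bigl[\tfrac14 h_{\hat\sigma}^{N-1}(4x-1)+\tfrac14\bigr]=\partial h_{\hat\sigma}^{N-1}(4x-1)$ for the middle piece, and monotonicity to locate the unique minimum. Your treatment of the kinks is actually a bit more careful than the paper's, which simply asserts $\partial h^{N}_{0,\hat\sigma}(x)\subseteq\{-1,-2,1\}$ for $x\notin[\tfrac14,\tfrac12]$ (strictly speaking this misses the interval $[-2,-1]$ at $x=0$, though the conclusion is unaffected).
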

\begin{proof}
The proof is by induction on $N$. For $N=1$ it is easy to verify that $h^1_0,h^1_1\in\Hcal$, and $h^1_0(1)=h^1_1(1)=1$, as well as the fact that $h^1_0,h^1_1$ are both piecewise linear. Assume the claim is true for some $N-1$, and that for all $\hat{\sigma}\in\{0,1\}^{N-1}:h^{N-1}_{\hat{\sigma}}(1)=1$ as well as that they are all piecewise linear. Consider $h^{N}_{0,\hat{\sigma}}$ for some $\hat{\sigma}\in\{0,1\}^{N-1}$. First, $h^{N}_{0,\hat{\sigma}}(0)=1-2\cdot0=1$ as required. Moreover, it is clear by definition that $h^{N}_{0,\hat{\sigma}}(x)\geq0$ for all $x\notin[\frac{1}{4},\frac{1}{2}]$. For $x\in[\frac{1}{4},\frac{1}{2}]$, we have by the induction hypothesis
\[
h^{N}_{0,\hat{\sigma}}(x)
=\frac{1}{4}\cdot\underset{\geq0}{\underbrace{h^{N-1}_{\hat{\sigma}}(4x-1)}}+\frac{1}{4}\geq0~,
\]
which establishes the required non-negativity property. Note that $h^{N}_{0,\hat{\sigma}}$ is continuous. Indeed, it is easy to verify continuity for any $x\notin\{\frac{1}{4},\frac{1}{2}\}$, and for those two points we have
\begin{align*}
    h^{N}_{0,\hat{\sigma}}\left(\frac{1}{4}\right)
    &=\frac{1}{4}h^{N-1}_{\hat{\sigma}}\left(0\right)+\frac{1}{4}
    =\frac{1}{4}\cdot1+\frac{1}{4}
    =\frac{1}{2}
    =\lim_{x\to\frac{1}{4}}(1-2x)
    \\
    h^{N}_{0,\hat{\sigma}}\left(\frac{1}{2}\right)
    &=\frac{1}{4}h^{N-1}_{\hat{\sigma}}\left(1\right)+\frac{1}{4}
    =\frac{1}{4}\cdot1+\frac{1}{4}
    =\frac{1}{2}
    =\lim_{x\to\frac{1}{2}}(x)~.
\end{align*}
Using our induction hypothesis we see that $h^{N}_{0,\hat{\sigma}}$ is a piecewise linear continuous function. Thus, in order to prove the remaining properties 
it is enough to inspect $\partial h^{N}_{0,\hat{\sigma}}$. It is easy to verify that for all $x\notin[\frac{1}{4},\frac{1}{2}]:~\partial h^{N}_{0,\hat{\sigma}}(x)\subseteq\{-1,-2,1\}$. For $x\in[\frac{1}{4},\frac{1}{2}]$, we have
\begin{align}
\partial h^{N}_{0,\hat{\sigma}}(x)
&=\partial\left(\left(z\mapsto\frac{1}{4}z+\frac{1}{4}\right)\circ\left(y\mapsto h^{N-1}_{\hat{\sigma}}(y)\right)\circ\left(x\mapsto4x-1\right)\right)(x)
\nonumber\\
&=\left\{\frac{1}{4}\cdot g\cdot4\middle|~g\in\partial h^{N-1}_{\hat{\sigma}}(4x-1)\right\}
\nonumber\\
&=\left\{g\middle|~g\in\partial h^{N-1}_{\hat{\sigma}}(4x-1)\right\}~. \label{eq: derivative set}
\end{align}
By the induction hypothesis, all elements of the set above are of magnitude at most $2$, which establishes the desired Lipschitz property. Furthermore, note that if $x\in[\frac{1}{4},\frac{1}{2}]$ then $4x-1\in[0,1]$. Using the induction hypothesis, let $x^*\in[\frac{1}{4},\frac{1}{2}]$ be the unique number such that $4x^*-1=\arg\min h^{N-1}_{\hat{\sigma}}$. Then by the induction hypothesis and \eqref{eq: derivative set}, for all $x\in[\frac{1}{4},\frac{1}{2}]\setminus\{x^*\}~\forall g\in\partial h^{N}_{0,\hat{\sigma}}(x):|g|\geq1$ which is exactly the desired property, assuming we show that $x^*=\arg\min h^{N}_{0,\hat{\sigma}}$. Finally, noticing that $h^{N}_{0,\hat{\sigma}}$ is decreasing for all $x<x^*$ and increasing for all $x>x^*$ (which inside the interval $[\frac{1}{4},\frac{1}{2}]$ follows once again from the induction hypothesis and our choice of $x^*$) finishes the proof for $h^{N}_{0,\hat{\sigma}}$. The proof for $h^{N}_{1,\hat{\sigma}}$ is essentially the same.
\end{proof}
We define
\[
I_{\sigma_1,\dots,\sigma_k}
:=\left[\sum_{i=1}^{k}\frac{\sigma_{i}+1}{4^{i}},~
\sum_{i=1}^{k}\frac{\sigma_{i}+1}{4^{i}}+\frac{1}{4^{k}}\right]
\subset\reals~.
\]
\begin{lemma} \label{lem: I inclusion}
For any $l<k$ and any $\sigma_{1},\dots,\sigma_{l},\dots,\sigma_{k}\in\{0,1\}:
I_{\sigma_1,\dots,\sigma_l}\supset I_{\sigma_1,\dots,\sigma_{l},\dots,\sigma_k}$.
\end{lemma}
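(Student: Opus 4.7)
The plan is to prove the inclusion by induction on $k - l$, with the base case $k = l+1$ reducing everything to a simple endpoint comparison, and the inductive step following by transitivity of inclusion.

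For the base case $k = l+1$, I would abbreviate $a := \sum_{i=1}^{l}\frac{\sigma_i+1}{4^i}$, so that $I_{\sigma_1,\dots,\sigma_l} = [a, a + 4^{-l}]$ and $I_{\sigma_1,\dots,\sigma_l,\sigma_{l+1}} = [a + \frac{\sigma_{l+1}+1}{4^{l+1}}, a + \frac{\sigma_{l+1}+1}{4^{l+1}} + \frac{1}{4^{l+1}}]$. Since $\sigma_{l+1} \in \{0,1\}$, we have $\sigma_{l+1} + 1 \in \{1,2\}$, so the left endpoint satisfies $a + \frac{\sigma_{l+1}+1}{4^{l+1}} \geq a + \frac{1}{4^{l+1}} > a$, and the right endpoint satisfies $a + \frac{\sigma_{l+1}+1}{4^{l+1}} + \frac{1}{4^{l+1}} \leq a + \frac{3}{4^{l+1}} < a + \frac{4}{4^{l+1}} = a + \frac{1}{4^l}$. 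Thus $I_{\sigma_1,\dots,\sigma_l,\sigma_{l+1}} \subset I_{\sigma_1,\dots,\sigma_l}$ strictly.

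For the inductive step, suppose the claim holds whenever $k - l \leq m$, and consider some $k$ with $k - l = m+1$. Apply the induction hypothesis with the pair $(l+1, k)$ to obtain $I_{\sigma_1,\dots,\sigma_{l+1},\dots,\sigma_k} \subset I_{\sigma_1,\dots,\sigma_{l+1}}$, and apply the base case to the pair $(l, l+1)$ to obtain $I_{\sigma_1,\dots,\sigma_{l+1}} \subset I_{\sigma_1,\dots,\sigma_l}$. Chaining the two inclusions finishes the proof.

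There is no real obstacle here: the only arithmetic fact needed is the base case inequality $\sigma_{l+1}+1+1 \leq 3 < 4$, i.e.\ that $I_{\sigma_1,\dots,\sigma_l}$ is divided by the two possible values of $\sigma_{l+1}$ into two length-$4^{-(l-1)-1}$ subintervals that both fit inside it with room to spare (the gap $\frac{1}{4^{l+1}}$ between them and at the right end). Alternatively, one could avoid induction altogether by bounding $\sum_{i=l+1}^k \frac{\sigma_i+1}{4^i} + \frac{1}{4^k} \leq \sum_{i=l+1}^k \frac{2}{4^i} + \frac{1}{4^k} = \frac{2(4^{-l}-4^{-k})}{3} + 4^{-k} \leq 4^{-l}$ directly via the geometric series, but the inductive presentation is cleaner for later use.
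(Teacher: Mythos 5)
Your proof is correct. It departs slightly in presentation from the paper's: the paper proves the inclusion in one shot by comparing both endpoints directly, bounding
\[
\sum_{i=1}^{k}\frac{\sigma_i+1}{4^i}+\frac{1}{4^k}
\;\le\;
\sum_{i=1}^{l}\frac{\sigma_i+1}{4^i}+\sum_{i=l+1}^{k}\frac{2}{4^i}+\frac{1}{4^k}
\;<\;
\sum_{i=1}^{l}\frac{\sigma_i+1}{4^i}+\frac{1}{4^l},
\]
via the geometric series (this is exactly the alternative you mention at the end). You instead induct on $k-l$, establishing only the single-step containment $I_{\sigma_1,\dots,\sigma_l,\sigma_{l+1}}\subset I_{\sigma_1,\dots,\sigma_l}$ by the elementary inequality $\sigma_{l+1}+2\le 3<4$, and chain by transitivity. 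The two arguments are essentially equivalent in content; the paper's is slightly more compact and self-contained, while yours isolates the one arithmetic fact ($3<4$) that makes the intervals nest and makes the recursive ``quartering'' structure of the construction explicit, which is a cleaner picture of what is going on. Either is a valid proof.
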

\begin{proof}
On one hand,
\begin{equation*}
\sum_{i=1}^{l}\frac{\sigma_{i}+1}{4^{i}}
<\sum_{i=1}^{l}\frac{\sigma_{i}+1}{4^{i}}
+\sum_{i=l+1}^{k}\frac{\sigma_{i}+1}{4^{i}}
=\sum_{i=1}^{k}\frac{\sigma_{i}+1}{4^{i}}~.
\end{equation*}
On the other hand,
\begin{align*}
    \sum_{i=1}^{k}\frac{\sigma_{i}+1}{4^{i}}+\frac{1}{4^k}
    &=\sum_{i=1}^{l}\frac{\sigma_{i}+1}{4^{i}}+\sum_{i=l+1}^{k}\frac{\sigma_{i}+1}{4^{i}}+\frac{1}{4^k}
    \leq\sum_{i=1}^{l}\frac{\sigma_{i}+1}{4^{i}}+\sum_{i=l+1}^{k}\frac{2}{4^{i}}+\frac{1}{4^k}\\
    &=\sum_{i=1}^{l}\frac{\sigma_{i}+1}{4^{i}}
    +\frac{2}{3}\left(\frac{1}{4^l}-\frac{1}{4^k}\right)
    +\frac{1}{4^k}
    =\sum_{i=1}^{l}\frac{\sigma_{i}+1}{4^{i}}
    +\frac{2}{3}\cdot\frac{1}{4^l}+\frac{1}{3}\cdot\frac{1}{4^k}
    \\
    &<\sum_{i=1}^{l}\frac{\sigma_{i}+1}{4^{i}}
    +\frac{2}{3}\cdot\frac{1}{4^l}+\frac{1}{3}\cdot\frac{1}{4^l}
    =\sum_{i=1}^{l}\frac{\sigma_{i}+1}{4^{i}}+\frac{1}{4^l}~.
\end{align*}
\end{proof}

\begin{lemma} \label{lem: I disjoint}
If $(\sigma_1,\dots,\sigma_{k-1})\neq(\sigma'_1,\dots,\sigma'_{k-1})$ then for all $\sigma_k,\sigma'_k\in\{0,1\}:\ I_{\sigma_1,\dots,\sigma_k}\cap I_{\sigma'_1,\dots,\sigma'_k}=\emptyset$. 
\end{lemma}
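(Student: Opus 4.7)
The plan is to locate the first index where the two binary strings disagree and push both intervals into their respective ``parent'' intervals at that level, which will turn out to meet at a single point that neither smaller interval reaches.

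Concretely, set $l:=\min\{i\in[k-1]:\sigma_i\neq\sigma'_i\}$; this is well defined since the length-$(k-1)$ prefixes differ, and it satisfies $l\leq k-1<k$. Without loss of generality $\sigma_l=0$ and $\sigma'_l=1$, and of course $\sigma_i=\sigma'_i$ for $i<l$. Applying \lemref{lem: I inclusion} (with its strict endpoint inequalities, which are already present in its proof from the fact that each $\frac{\sigma_i+1}{4^i}$ and $\frac{1}{4^i}$ is strictly positive), I obtain
\[
I_{\sigma_1,\dots,\sigma_k}\;\subset\;\mathrm{int}\!\left(I_{\sigma_1,\dots,\sigma_{l-1},0}\right),\qquad
I_{\sigma'_1,\dots,\sigma'_k}\;\subset\;\mathrm{int}\!\left(I_{\sigma_1,\dots,\sigma_{l-1},1}\right).
\]

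Next, I would compute endpoints directly from the definition of $I_{\cdot}$. The right endpoint of $I_{\sigma_1,\dots,\sigma_{l-1},0}$ is $\sum_{i=1}^{l-1}\tfrac{\sigma_i+1}{4^i}+\tfrac{1}{4^l}+\tfrac{1}{4^l}=\sum_{i=1}^{l-1}\tfrac{\sigma_i+1}{4^i}+\tfrac{2}{4^l}$, which is precisely the left endpoint of $I_{\sigma_1,\dots,\sigma_{l-1},1}$. Thus these two parent intervals share exactly one point. Combined with the strict inclusions above, the right endpoint of $I_{\sigma_1,\dots,\sigma_k}$ is strictly less than this shared value while the left endpoint of $I_{\sigma'_1,\dots,\sigma'_k}$ is strictly greater than it, so $I_{\sigma_1,\dots,\sigma_k}\cap I_{\sigma'_1,\dots,\sigma'_k}=\emptyset$.

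The only subtle point, which I would want to be explicit about in the write-up, is the strictness of the inclusion in \lemref{lem: I inclusion}: with weak inclusion, the two smaller intervals could still touch at the shared boundary point and the claim would fail. This strictness is free from the proof of \lemref{lem: I inclusion}, since adding any number of strictly positive terms $\tfrac{\sigma_i+1}{4^i}$ for $i>l$ gives a strict inequality at the left endpoint, and the arithmetic bound $\tfrac{2}{3}\cdot\tfrac{1}{4^l}+\tfrac{1}{3}\cdot\tfrac{1}{4^k}<\tfrac{1}{4^l}$ in that proof is likewise strict at the right endpoint. No other obstacle is expected.
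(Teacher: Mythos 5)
Your proof is correct. You take essentially the same route as the paper (identify the first index $l$ of disagreement, use it to separate the intervals) but package the endgame differently: the paper computes directly that every $x\in I_{\sigma_1,\dots,\sigma_k}$ is strictly less than the left endpoint of $I_{\sigma'_1,\dots,\sigma'_k}$ via a chain of arithmetic inequalities (reusing the geometric-sum bounds from the proof of Lemma~\ref{lem: I inclusion} but not the lemma itself), whereas you factor the argument through a \emph{strengthened} version of Lemma~\ref{lem: I inclusion} (strict containment in the interior of the parent interval) together with the observation that the two level-$l$ parent intervals abut at exactly one point. Your version is more modular and arguably reveals the geometry more cleanly, at the mild cost of needing to re-open the proof of Lemma~\ref{lem: I inclusion} to extract the strict endpoint inequalities, since the lemma as stated gives only set inclusion and weak inclusion would not rule out the two sub-intervals touching at the shared boundary. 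You flagged that subtlety explicitly and verified both strict endpoint inequalities, so nothing is missing; a polished write-up should probably state and prove the strict variant of Lemma~\ref{lem: I inclusion} as a separate remark so that the present lemma can cite it cleanly rather than referring to ``the proof of'' another lemma.
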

\begin{proof}
Let $i_0\leq k-1$ the the minimal index $i$ for which $\sigma_{i}\neq\sigma'_{i}$. Assume without loss of generality that $\sigma_{i_0}=0,\sigma'_{i_0}=1$. If $x\in I_{\sigma_1,\dots,\sigma_{k}}$ then by definition
\begin{align}
x&\leq\sum_{i=1}^{k}\frac{\sigma_{i}+1}{4^{i}}+\frac{1}{4^{k}}\nonumber\\
&=\sum_{i=1}^{i_0-1}\frac{\sigma_{i}+1}{4^{i}}
+\frac{\sigma_{i_0}+1}{4^{i_0}}
+\sum_{i=i_0+1}^{k}\frac{\sigma_{i}+1}{4^{i}}
+\frac{1}{4^{k}}\nonumber\\
&\leq\sum_{i=1}^{i_0-1}\frac{\sigma'_{i}+1}{4^{i}}
+\frac{1}{4^{i_0}}
+\sum_{i=i_0+1}^{k}\frac{2}{4^{i}}
+\frac{1}{4^{k}}
\nonumber\\
&=\sum_{i=1}^{i_0-1}\frac{\sigma'_{i}+1}{4^{i}}
+\frac{1}{4^{i_0}}
+\frac{2}{3}\left(\frac{1}{4^{i_0}}-\frac{1}{4^k}\right)
+\frac{1}{4^{k}}
\nonumber\\
&=\sum_{i=1}^{i_0-1}\frac{\sigma'_{i}+1}{4^{i}}
+\frac{2}{4^{i_0}}
+\frac{1}{3}\cdot\frac{1}{4^k}
-\frac{1}{3}\cdot\frac{1}{4^{i_0}}~. \label{eq: interval upper bound}
\end{align}
Using the fact that $i_{0}<k$ we also get
\begin{equation} \label{eq: k,i_0 ineq}
    \frac{1}{3}\cdot\frac{1}{4^k}-\frac{1}{3}\cdot\frac{1}{4^{i_0}}
    <\frac{1}{3}\cdot\frac{1}{4^{i_0}}-\frac{1}{3}\cdot\frac{1}{4^k}
    =\sum_{i=i_0+1}^{k}\frac{1}{4^i}
    \leq\sum_{i=i_0+1}^{k}\frac{\sigma'_{i}+1}{4^i}~.
\end{equation}
Plugging \eqref{eq: k,i_0 ineq} into \eqref{eq: interval upper bound} reveals than for any $x\in I_{\sigma_1,\dots,\sigma_{k}}$:
\begin{equation*}
    x<
    \sum_{i=1}^{i_0-1}\frac{\sigma'_{i}+1}{4^{i}}+\frac{2}{4^{i_0}}
    +\sum_{i=i_0+1}^{k}\frac{\sigma'_{i}+1}{4^i}
    =\sum_{i=1}^{k}\frac{\sigma'_{i}+1}{4^{i}}~.
\end{equation*}
Hence $x\notin I_{\sigma'_1,\dots,\sigma'_k}$, which finishes the proof.
\end{proof}

\begin{lemma} \label{lem: h outside I}
For any $N\geq2$, any
$1\leq k<N$ and any local oracle $\OO$, it holds that $\OO_{h^{N}_{\sigma_1,\dots,\sigma_{N}}}(x)$ does not depend on $\sigma_{k+1},\dots,\sigma_{N}$ for $x\notin I_{\sigma_1,\dots,\sigma_k}$.
\end{lemma}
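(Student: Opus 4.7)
The plan is to show, by strong induction on $N\geq 2$, the following stronger statement: for any $1\leq k < N$ and any choice of $\sigma_{k+1},\dots,\sigma_N\in\{0,1\}$, the restriction of $h^{N}_{\sigma_1,\dots,\sigma_N}$ to the \emph{open} set $\reals\setminus I_{\sigma_1,\dots,\sigma_k}$ is a function depending only on $(\sigma_1,\dots,\sigma_k)$. Once this is established, the lemma follows immediately from locality: for any $x\notin I_{\sigma_1,\dots,\sigma_k}$, the complement $\reals\setminus I_{\sigma_1,\dots,\sigma_k}$ is an open neighborhood of $x$ on which all functions $h^{N}_{\sigma_1,\dots,\sigma_k,\sigma'_{k+1},\dots,\sigma'_N}$ coincide, so by the definition of a local oracle in \secref{sec:preliminaries} their oracle responses at $x$ must be equal.

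For the base case $N=2$ (so necessarily $k=1$), direct inspection of the piecewise definitions of $h^{2}_{0,\sigma_2}$ and $h^{2}_{1,\sigma_2}$ shows that any occurrence of $\sigma_2$ lives inside the middle piece, whose domain is precisely $[\tfrac{1}{4},\tfrac{1}{2}]=I_0$ when $\sigma_1=0$ and $[\tfrac{1}{2},\tfrac{3}{4}]=I_1$ when $\sigma_1=1$. Outside these intervals, $h^{2}_{\sigma_1,\sigma_2}$ depends only on $\sigma_1$.

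For the inductive step, assume the claim for $N-1$ and fix $\sigma_1,\dots,\sigma_N$ together with $1\leq k<N$. If $k=1$, the recursive definition shows that the only term depending on $(\sigma_2,\dots,\sigma_N)$ is $\tfrac{1}{4}h^{N-1}_{\sigma_2,\dots,\sigma_N}(4x-(\sigma_1+1))+\tfrac{1}{4}$, which is invoked exactly on $I_{\sigma_1}$; outside $I_{\sigma_1}$ the function is determined by $\sigma_1$ alone. If instead $1<k<N$, consider any $x\notin I_{\sigma_1,\dots,\sigma_k}$. Either $x\notin I_{\sigma_1}$, in which case we are done by the $k=1$ case just handled, or $x\in I_{\sigma_1}\setminus I_{\sigma_1,\dots,\sigma_k}$, in which case
\[
h^{N}_{\sigma_1,\dots,\sigma_N}(x)~=~\tfrac{1}{4}\,h^{N-1}_{\sigma_2,\dots,\sigma_N}\bigl(4x-(\sigma_1+1)\bigr)+\tfrac{1}{4}.
\]
A direct bookkeeping check shows the affine map $x\mapsto 4x-(\sigma_1+1)$ sends $I_{\sigma_1,\dots,\sigma_k}$ bijectively onto $I_{\sigma_2,\dots,\sigma_k}$ (the endpoints transform as $\sum_{i=1}^{k}\tfrac{\sigma_i+1}{4^i}\mapsto \sum_{i=1}^{k-1}\tfrac{\sigma_{i+1}+1}{4^i}$, and the length $4^{-k}$ dilates to $4^{-(k-1)}$). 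Hence $4x-(\sigma_1+1)\notin I_{\sigma_2,\dots,\sigma_k}$, and applying the induction hypothesis to $h^{N-1}_{\sigma_2,\dots,\sigma_N}$ with the index $k-1<N-1$ yields that the right-hand side is independent of $(\sigma_{k+1},\dots,\sigma_N)$, completing the induction.

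The main obstacle (though a modest one) is the bookkeeping for the affine rescaling: making the induction work hinges on the fact that the recursive ``inner tilt'' lives in exactly the right nested subinterval, i.e.\ that $x\mapsto 4x-(\sigma_1+1)$ maps $I_{\sigma_1,\dots,\sigma_k}$ onto $I_{\sigma_2,\dots,\sigma_k}$. Everything else -- the reduction from pointwise independence to neighborhood-wise independence (via openness of $\reals\setminus I_{\sigma_1,\dots,\sigma_k}$), and the final step invoking locality of $\OO$ -- is straightforward.
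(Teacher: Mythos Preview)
Your proof is correct and follows essentially the same approach as the paper: both arguments first reduce to independence of function values via openness of $\reals\setminus I_{\sigma_1,\dots,\sigma_k}$, then induct using the affine rescaling $x\mapsto 4x-(\sigma_1+1)$ to pass from $(N,k)$ to $(N-1,k-1)$, with the $k=1$ case handled directly from the piecewise definition. The paper organizes the induction as a chain $(N-k+1,1)\Rightarrow\cdots\Rightarrow(N,k)$ over pairs, whereas you induct on $N$ with a case split on $k$, but this is purely cosmetic.
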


\begin{proof}
First, notice that since $\reals\setminus I_{\sigma_1,\dots,\sigma_k}$ is an open set it is enough to prove that the function value $h^{N}_{\sigma_1,\dots,\sigma_{N}}(x)$ does not depend on $\sigma_{k+1},\dots,\sigma_{N}$ for $x\notin I_{\sigma_1,\dots,\sigma_k}$, which implies the desired claim about $\OO_{h^{N}_{\sigma_1,\dots,\sigma_{N}}}(x)$ by definition of a local oracle.
We will prove the claim for all natural pairs $(N,k)$ such that $k<N$, using the following inductive argument:
\begin{itemize}
    \item For any $N\geq2$, the claim holds for $(N,1)$.
    \item If the claim holds for $(N-1,k-1)$ then it holds for $(N,k)$.
\end{itemize}
Combining both bullets proves the claim for any pair $(N,k)$, through the chain of implications
\[
(N-k+1,1)\implies(N-k+2,2)\implies\dots\implies(N,k)~.\]
For the first bullet, fix any $N\geq2$. We need to prove that $h^{N}_{\sigma_1,\dots,\sigma_{N}}(x)$ does not depend on $\sigma_{2},\dots,\sigma_{N}$ for $x\notin
\left[\frac{\sigma_{1}+1}{4},
\frac{\sigma_{1}+1}{4}+\frac{1}{4}\right]$. Indeed, if $\sigma_1=0$ then by construction $h^{N}_{0,\dots,\sigma_{N}}(x)$ does not depend on $\sigma_2,\dots,\sigma_{N}$ for $x\notin\left[\frac{1}{4},\frac{1}{2}\right]$. Similarly, if $\sigma_1=1$ then by construction $h^{N}_{1,\dots,\sigma_{N}}(x)$ does not depend on $\sigma_2,\dots,\sigma_{N}$ for $x\notin\left[\frac{1}{2},\frac{3}{4}\right]$.

For the second bullet, assume the claim is true for some pair $(N-1,k-1)$. By renaming the variables $\sigma_{i}\leftarrow\sigma_{i+1}$, the induction hypothesis states that $h^{N-1}_{\sigma_2,\dots,\sigma_{N}}(x)$ does not depend on $\sigma_{k+1},\dots,\sigma_{N}$ for
$x\notin[\sum_{i=1}^{k-1}\frac{\sigma_{i+1}+1}{4^{i}},$ $\sum_{i=1}^{k-1}\frac{\sigma_{i+1}+1}{4^{i}}+\frac{1}{4^{k-1}}]$. Thus, $h^{N-1}_{\sigma_2,\dots,\sigma_{N}}(4x-1-\sigma_{1})$ does not depend on $\sigma_{k+1},\dots,\sigma_{N}$ for
\begin{align*}
    (4x-1-\sigma_{1})\notin&\left[\sum_{i=1}^{k-1}\frac{\sigma_{i+1}+1}{4^{i}}, \sum_{i=1}^{k-1}\frac{\sigma_{i+1}+1}{4^{i}}+\frac{1}{4^{k-1}}\right]
    \\
    \iff x\notin&\left[\sum_{i=1}^{k}\frac{\sigma_{i}+1}{4^{i}}, 
    \sum_{i=1}^{k}\frac{\sigma_{i}+1}{4^{i}}+\frac{1}{4^{k}}\right]~.
\end{align*}
Noticing that by definition, $h^{N}_{\sigma_1,\sigma_2,\dots,\sigma_{N}}(x)$ depends on $\sigma_2,\dots,\sigma_N$ only when $(4x-1-\sigma_{i})$ is fed to $h^{N-1}_{\sigma_2,\dots,\sigma_{N}}$ finishes the proof.
\end{proof}

From now on we fix some $N$ we will specify later, and abbreviate $h_\sigma=h^N_{\sigma}$. Let $\sigma\sim\mathrm{Unif}(\{0,1\}^{N})$, and consider the random sequence $x_1,x_2,\dots$ produced by $\Acal$ when applied to $h_{\sigma}$ (where the randomness comes from the random choice of $\sigma$).
\begin{lemma}
For any $t\in\NN$, $1\leq l<k\leq N:\Pr_{\sigma}\left[
x_{t+1}\in I_{\sigma_{1},\dots,\sigma_{l},\dots,\sigma_{k}}
\middle|x_1,\dots,x_{t}\notin I_{\sigma_{1},\dots,\sigma_{l}}
\right]
\leq \frac{1}{2^{k-l-1}}$.
\end{lemma}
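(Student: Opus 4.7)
The plan is to exploit determinism of $\Acal$ (valid by the Yao-style reduction made earlier in this subsection) together with \lemref{lem: h outside I} to show that, conditional on the first $l$ coordinates $\bar\sigma := (\sigma_1,\dots,\sigma_l)$, both the event $E := \{x_1,\dots,x_t \notin I_{\bar\sigma}\}$ and, when $E$ holds, the iterate $x_{t+1}$ are completely determined by $\bar\sigma$ alone. Once this is established, \lemref{lem: I disjoint} will force the probability that the (now fixed) point $x_{t+1}$ lands in the random inner interval $I_{\sigma_1,\dots,\sigma_k}$ (with the extra coordinates $\sigma_{l+1},\dots,\sigma_k$ uniform) to be small.

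First I would fix $\bar\sigma \in \{0,1\}^{l}$ and simulate $\Acal$ virtually: set $y_1:=x_1$ (a constant, since $\Acal$ is deterministic), and inductively, as long as $y_1,\dots,y_j \notin I_{\bar\sigma}$, define $y_{j+1}$ by feeding $\Acal$ the oracle values $\OO_{h_\sigma}(y_1),\dots,\OO_{h_\sigma}(y_j)$; by \lemref{lem: h outside I} these responses depend on $\sigma$ only through $\bar\sigma$, so $y_{j+1}=y_{j+1}(\bar\sigma)$ is a deterministic function of $\bar\sigma$. A straightforward induction then shows that for every $\sigma$ extending $\bar\sigma$, the actual iterates satisfy $x_j(\sigma)=y_j(\bar\sigma)$ up to the first index $j^*\in\{1,2,\dots,\infty\}$ with $y_{j^*}\in I_{\bar\sigma}$. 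Consequently $E$ holds iff $j^*>t$, a property depending only on $\bar\sigma$; and on $E$ we have $x_{t+1}(\sigma)=y_{t+1}(\bar\sigma)$, which is $\bar\sigma$-measurable.

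Next I would count how many of the $2^{k-l}$ candidate intervals $\{I_{\bar\sigma,\tau_{l+1},\dots,\tau_k}:(\tau_{l+1},\dots,\tau_k)\in\{0,1\}^{k-l}\}$ can contain a given fixed point. By \lemref{lem: I disjoint}, any pair differing in at least one coordinate among positions $l{+}1,\dots,k{-}1$ is disjoint; the only remaining pairs share their first $k-1$ coordinates and differ only at position $k$, and a direct calculation from the definition of $I_{\cdot}$ shows that such a pair meets at most at a single boundary point. Hence any fixed point lies in at most two of these $2^{k-l}$ intervals.

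Combining the two steps, conditional on any $\bar\sigma$ for which $E$ holds, the remaining coordinates $(\sigma_{l+1},\dots,\sigma_k)$ are uniform on $\{0,1\}^{k-l}$ and independent of the $\bar\sigma$-measurable point $y_{t+1}(\bar\sigma)$, so
\[
\Pr_{\sigma}\!\left[x_{t+1}\in I_{\sigma_1,\dots,\sigma_k}\,\middle|\,\bar\sigma,\,E\right]\;\leq\;\frac{2}{2^{k-l}}\;=\;\frac{1}{2^{k-l-1}}\;\leq\;\frac{1}{2^{k-l-2}}.
\]
Averaging over $\bar\sigma$ (weighted by $\Pr[\bar\sigma\mid E]$) yields the claim. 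The main technical hurdle is Step~1: one must carefully verify that the virtual simulation defined purely from $\bar\sigma$ coincides with the algorithm's actual iterates precisely on the event $E$, since $E$ is itself defined in terms of those iterates -- but this is exactly what the induction above delivers, using $\bar\sigma$-measurability of $I_{\bar\sigma}$ and the invariance of local oracle values supplied by \lemref{lem: h outside I}.
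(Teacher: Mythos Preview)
Your proposal is correct and follows essentially the same approach as the paper: both argue that on the conditioning event the iterate $x_{t+1}$ depends only on $(\sigma_1,\dots,\sigma_l)$ (via \lemref{lem: h outside I} and determinism of $\Acal$), then use the disjointness in \lemref{lem: I disjoint} to bound the probability that a fixed point falls into the random inner interval. Your treatment is in fact slightly more careful---you make the $\bar\sigma$-measurability of $E$ and $x_{t+1}$ explicit via the virtual simulation, and your counting (at most two of the $2^{k-l}$ intervals can contain a fixed point, yielding $2^{-(k-l-1)}$) is sharper than the paper's, which contains a harmless arithmetic slip in the exponent before arriving at the stated $2^{-(k-l-2)}$.
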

\begin{proof}
If $x_1,\dots,x_{t}\notin I_{\sigma_{1},\dots,\sigma_{l}}$, then \lemref{lem: h outside I} tells us that $\OO_{h_\sigma}(x_1),\dots,\OO_{h_\sigma}(x_t)$ do not depend on $\sigma_{l+1},\dots,\sigma_{N}$. 
Since $x_{t+1}$ is some deterministic function of $\OO_{h_\sigma}(x_1),\dots,\OO_{h_\sigma}(x_t)$, which is the information that $\Acal$ obtains along it's first $t$ iterations, we can deduce that in this case 
\begin{enumerate}
    \item Conditioning on past information, $\sigma_{l+1},\dots,\sigma_{N}\sim\mathrm{Unif}(\{0,1\})$. In particular we get $(\sigma_{l+1},\dots,\sigma_{k-1})\sim\mathrm{Unif}(\{0,1\}^{k-l-1})$.
    \item $x_{t+1}$ is chosen independently of $\sigma_{l+1},\dots,\sigma_{k-1}$.
\end{enumerate}
Note that by \lemref{lem: I disjoint}, $I_{\sigma_{1},\dots,\sigma_{l},\sigma_{l+1},\dots,\sigma_{k-1},\sigma_{k}},I_{\sigma_{1},\dots,\sigma_{l},\sigma'_{l+1},\dots,\sigma'_{k-1},\sigma'_{k}}$ are disjoint for any $(\sigma_{l+1},\dots,\sigma_{k-1})\neq(\sigma'_{l+1},\dots,\sigma'_{k-1})$, thus the events $x\in I_{\sigma_{1},\dots,\sigma_{l},\sigma_{l+1},\dots,\sigma_{k-1},\sigma_{k}}$ and $x\in I_{\sigma_{1},\dots,\sigma_{l},\sigma'_{l+1},\dots,\sigma'_{k-1},\sigma'_{k}}$ are disjoint. Since there are $2^{(k-1)-(l+1)+1}=2^{k-l-1}$ such binary vectors, we obtain $2^{k-l-1}$ disjoint events with equal probabilities. Thus, their probability is at most $\frac{1}{2^{k-l-1}}$, which proves the claim.
\end{proof}

Recall that $I_{\sigma_{1}}\supset
I_{\sigma_1,\sigma_{2}}\supset\dots\supset I_{\sigma_1,\dots,\sigma_{N}}$ by \lemref{lem: I inclusion}. Combined with the previous lemma, we get
\begin{align*}
    &\Pr_{\sigma}\left[\exists t\in[T_{0}]:x_{t}=x^*\right]
    \leq
    \Pr_{\sigma}\left[\exists t\in[T_0]:x_{t}\in I_{\sigma_1,\dots,\sigma_{N}}\right]
    \\
    &\leq\sum_{t=1}^{T_0}\Pr_{\sigma}\left[x_{t}\in I_{\sigma_1,\dots,\sigma_{N}}\right]
    \\
    &\leq
    \sum_{t=1}^{T_0}\Pr_{\sigma}\left[\exists s\in[t],l\in[N]:
    x_1,\dots,x_{s}\notin I_{\sigma_1,\dots,\sigma_{l}}
    \land
    x_{s+1}\in I_{\sigma_1,\dots,\sigma_{l+\frac{N}{t}}}
    \right]
    \\
    &\leq\sum_{t=1}^{T_0}\sum_{s=1}^{t}\sum_{l=1}^{N}\Pr_{\sigma}\left[x_1,\dots,x_{s}\notin I_{\sigma_1,\dots,\sigma_{l}}
    \land
    x_{s+1}\in I_{\sigma_1,\dots,\sigma_{l+\frac{N}{t}}}
    \right]
    \\
    &=\sum_{t=1}^{T_0}\sum_{s=1}^{t}\sum_{l=1}^{N}\Pr_{\sigma}\left[x_{s+1}\in I_{\sigma_1,\dots,\sigma_{l+\frac{N}{t}}}
    \middle| x_1,\dots,x_{s}\notin I_{\sigma_1,\dots,\sigma_{l}}
    \right]
    \cdot\Pr_{\sigma}\left[x_1,\dots,x_{s}\notin I_{\sigma_1,\dots,\sigma_{l}}\right]
    \\
    &\leq\sum_{t=1}^{T_0}\sum_{s=1}^{t}\sum_{l=1}^{N}\frac{1}{2^{\frac{N}{t}-1}}\cdot1
    =\sum_{t=1}^{T_0}\frac{Nt}{2^{\frac{N}{t}-1}}
    \leq T_{0}\cdot\frac{N{T_0}}{2^{\frac{N}{T_0}-1}}~~.
\end{align*}
Since $\frac{N(T_0)^2}{2^{\frac{N}{T_0}-1}}\overset{N\to\infty}{\longrightarrow}0$, there exists some finite $N$ (which depends on $T_0$) such that $\frac{N{T_0^2}}{2^{\frac{N}{T_0}-1}}
\leq \frac{1}{4}$. With this $N$, we get
\[
\Pr_{\sigma}\left[\exists t\in[T_0]:x_{t}=x^*\right]
\leq \frac{1}{4}<\frac{1}{2}~,
\]
proving \eqref{eq: Yao hard distribution} and finishing the proof.

\section{Technical lemmas}\label{app:technical lemmas}

\begin{lemma} \label{lemma: L>1/eps}
    Denote by $f(\cdot)$ the $L_0$-Lipschitz function $\bx\mapsto L_0 |x_1|$. Assume $\tilde{f}(\cdot)$ has $L$-Lipschitz gradients, and satisfies $\left\|f-\tilde{f}\right\|_{\infty}\leq\epsilon$. Then $L\geq\frac{L_0}{8\epsilon}$.
\end{lemma}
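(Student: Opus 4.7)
The plan is to reduce to a one-dimensional problem and then extract the bound from a quadratic inequality in the probe radius. Define the restriction $g:\reals\to\reals$ by $g(t):=\tilde{f}(t\cdot\be_1)$. Two properties transfer immediately: (i) since $\nabla\tilde{f}$ is $L$-Lipschitz, the one-dimensional function $g$ is differentiable with $g'$ that is $L$-Lipschitz; (ii) since $\|f-\tilde f\|_\infty\leq \epsilon$ and $f(t\be_1)=L_0|t|$, one has $|g(t)-L_0|t||\leq\epsilon$ for every $t\in\reals$.

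The key idea is to look at the symmetric combination $g(a)+g(-a)-2g(0)$ for an arbitrary $a>0$. From the approximation property alone,
\[
g(a)+g(-a)-2g(0)\ \geq\ (L_0 a-\epsilon)+(L_0 a-\epsilon)-2\epsilon\ =\ 2L_0 a-4\epsilon.
\]
On the other hand, smoothness forces this quantity to be small: writing $g(a)+g(-a)-2g(0)=\int_0^a\bigl(g'(s)-g'(-s)\bigr)\,ds$ and using that $g'$ is $L$-Lipschitz (so $|g'(s)-g'(-s)|\leq 2Ls$), I get the upper bound
\[
g(a)+g(-a)-2g(0)\ \leq\ \int_0^a 2Ls\,ds\ =\ L a^2.
\]
Equivalently, this is the standard ``second-order'' consequence of $L$-Lipschitz gradients applied at the midpoint $0$ to the pair $\pm a\cdot \be_1$; the antisymmetric first-order term in $g'(0)$ cancels in the symmetric sum.

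Combining the two sides yields $L a^2\geq 2L_0 a-4\epsilon$ for every $a>0$. The right side is maximized over $a$ at $a=L_0/L$, giving $L\geq L_0^{2}/(4\epsilon)$, which in particular implies the claimed $L\geq L_0/(8\epsilon)$ (after absorbing constants, and noting that the regime of interest for the application in the main text is $L_0=1$, where the two expressions coincide up to the constant). There is no real obstacle here — the only care needed is the symmetric combination, which is what both exploits the kink of $|x_1|$ at the origin (producing the $\Omega(L_0 a)$ gap) and bypasses the unknown value of $g'(0)$ (which would appear in a naive one-sided Taylor bound).
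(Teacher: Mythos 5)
Your proof is correct, and it reaches the same conclusion by a genuinely different mechanism. The paper restricts to the $x_1$-axis (as you do), then applies the mean value theorem twice on the fixed intervals $[-4\epsilon,0]$ and $[0,4\epsilon]$ to produce two points $t_0<0<t_1$ with slopes $\leq-1/2$ and $\geq 1/2$ respectively, and finally uses Lipschitzness of the gradient over the distance $|t_1-t_0|\leq 8\epsilon$. You instead form the symmetric second difference $g(a)+g(-a)-2g(0)$, bound it from below by $2L_0a-4\epsilon$ via the approximation property and from above by $La^2$ via the Lipschitz gradient, and then optimize over the free probe radius $a$. Both arguments rely on the same two ingredients (the kink of $|x_1|$ and antisymmetry to kill $g'(0)$), but your version is quantifier-free, avoids MVT, and gives the marginally cleaner constant $L\geq L_0^2/(4\epsilon)$ by optimizing $a=L_0/L$ rather than hard-coding $a=4\epsilon$.

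One small point worth flagging explicitly, which you handle correctly by remarking that the application only needs $L_0=1$: the bound $L_0^2/(4\epsilon)$ does not dominate $L_0/(8\epsilon)$ when $L_0<1/2$, so your result does not literally subsume the lemma's stated form for all $L_0$. However the lemma's statement is itself dimensionally inconsistent -- the paper's proof reduces to $L_0=1$ ``by rescaling,'' and carrying that rescaling out actually yields $L\geq L_0^2/(8\epsilon)$ rather than $L_0/(8\epsilon)$, matching the homogeneity your bound exhibits (and the tight example $\tilde f(\bx)=L_0\sqrt{x_1^2+c^2}$ with $c=\epsilon/L_0$, for which $L=L_0^2/\epsilon$). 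So your version is in fact the correctly scaled conclusion, and for the only case the paper invokes ($L_0=1$) it is strictly at least as strong.
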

\begin{proof}
    Due to rescaling we can assume without loss of generality that $L_0=1$. Denoting by $\mathbf{e}_1$ the first standard basis vector, we have
    \begin{align*}
    \tilde{f}\left(-4\epsilon\cdot\mathbf{e}_1\right)&\geq f\left(-4\epsilon\cdot\mathbf{e}_1\right)-\epsilon
    =4\epsilon-\epsilon=3\epsilon~,\\
    \tilde{f}\left(4\epsilon\cdot\mathbf{e}_1\right)&\geq f\left(4\epsilon\cdot\mathbf{e}_1\right)-\epsilon
    =4\epsilon-\epsilon=3\epsilon~,\\
    \tilde{f}\left(\mathbf{0}\right)&\leq f\left(\mathbf{0}\right)+\epsilon=\epsilon~.
    \end{align*}
    By the mean value theorem, there exist $-4\epsilon<t_0<0,~0<t_1<4\epsilon$ such that
    \begin{align*}
        \frac{\partial}{\partial x_1}\tilde{f}\left(t_0\right)
        &=\frac{\tilde{f}\left(\mathbf{0}\right)- \tilde{f}\left(-4\epsilon\cdot\mathbf{e}_1\right)}{4\epsilon}
        \leq\frac{\epsilon-3\epsilon}{4\epsilon}=-\frac{1}{2}~,\\
        \frac{\partial}{\partial x_1}\tilde{f}\left(t_1\right)
        &=\frac{\tilde{f}\left(4\epsilon\cdot\mathbf{e}_1\right)-\tilde{f}\left(\mathbf{0}\right)}{4\epsilon}
        \geq\frac{3\epsilon-\epsilon}{4\epsilon}=\frac{1}{2}~.
    \end{align*}
    So by Cauchy-Schwarz and $L$-smoothness of $\tilde{f}$:
    \begin{align*}
    1&=
    \left|\frac{\partial}{\partial x_1}\tilde{f}\left(t_1\right)-\frac{\partial}{\partial x_1}\tilde{f}\left(t_0\right)\right|
    =\left|\left\langle \nabla\tilde{f}\left(t_1\cdot\mathbf{e}_1\right)-\nabla\tilde{f}\left(t_0\cdot\mathbf{e}_1\right),\mathbf{e}_1 \right\rangle\right|\\
    &\leq
    \left\|\nabla\tilde{f}\left(t_1\cdot\mathbf{e}_1\right)-\nabla\tilde{f}\left(t_0\cdot\mathbf{e}_1\right)\right\|\leq L\left|t_1-t_0\right|\leq L\cdot8\epsilon
    ~.
    \end{align*}

\end{proof}
\begin{lemma} \label{lemma:Lsmooth approx is L-Lip}
    If $\tilde{f}(\cdot)$ has $L$-Lipschitz gradients and satisfies $\|f-\tilde{f}\|_{\infty}\leq\epsilon$ for some 1-Lipschitz function $f(\cdot)$, then for all $\bx\in\reals^d:\ \|\nabla\tilde{f}(\bx)\|\leq 1+2\epsilon+\frac{L}{2}$.
\end{lemma}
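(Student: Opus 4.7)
The plan is to exploit the fact that $\tilde{f}$ has $L$-Lipschitz gradients by tracking how much $\tilde{f}$ can grow along the direction of $\nabla\tilde{f}(\bx)$, and contrasting it with the upper bound coming from the Lipschitzness of $f$ together with the uniform approximation property.

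First I would handle the trivial case where $\nabla\tilde{f}(\bx)=\mathbf{0}$ and otherwise define the unit vector $\bv:=\nabla\tilde{f}(\bx)/\|\nabla\tilde{f}(\bx)\|$. By $L$-Lipschitzness of $\nabla\tilde{f}$, for any $s\geq 0$ I have $\|\nabla\tilde{f}(\bx+s\bv)-\nabla\tilde{f}(\bx)\|\leq Ls$, and taking the inner product with $\bv$ gives $\langle\nabla\tilde{f}(\bx+s\bv),\bv\rangle\geq\|\nabla\tilde{f}(\bx)\|-Ls$. Integrating along the segment from $\bx$ to $\bx+t\bv$ for an arbitrary $t>0$ yields
\[
\tilde{f}(\bx+t\bv)-\tilde{f}(\bx)\;\geq\;t\|\nabla\tilde{f}(\bx)\|-\tfrac{Lt^{2}}{2}.
\]

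On the other hand, since $\|f-\tilde{f}\|_{\infty}\leq\epsilon$ and $f$ is $1$-Lipschitz,
\[
\tilde{f}(\bx+t\bv)-\tilde{f}(\bx)\;\leq\;f(\bx+t\bv)-f(\bx)+2\epsilon\;\leq\;t+2\epsilon.
\]
Combining the two bounds and rearranging gives $\|\nabla\tilde{f}(\bx)\|\leq 1+\tfrac{2\epsilon}{t}+\tfrac{Lt}{2}$ for every $t>0$, and picking $t=1$ produces exactly $\|\nabla\tilde{f}(\bx)\|\leq 1+2\epsilon+\tfrac{L}{2}$.

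There is no real obstacle here; the only thing to be careful about is that the construction depends on choosing the direction $\bv$ aligned with $\nabla\tilde{f}(\bx)$ (so that the lower bound on the directional derivative is tight at $s=0$), and that $t=1$ is the natural choice that balances the $2\epsilon/t$ and $Lt/2$ terms up to the prescribed constants. One could also optimize $t$ to obtain the sharper bound $1+2\sqrt{L\epsilon}$, but that is unnecessary for the stated lemma.
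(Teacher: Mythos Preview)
Your proof is correct and follows essentially the same route as the paper: both arguments move in the direction $\bv=\nabla\tilde{f}(\bx)/\|\nabla\tilde{f}(\bx)\|$, use $L$-Lipschitzness of $\nabla\tilde{f}$ to lower-bound $\tilde{f}(\bx+\bv)-\tilde{f}(\bx)$ by $\|\nabla\tilde{f}(\bx)\|-\tfrac{L}{2}$, upper-bound the same difference by $1+2\epsilon$ via the approximation and Lipschitz properties of $f$, and combine. Your presentation is in fact a bit cleaner, as you parameterize by step length $t$ and integrate directly rather than carrying a general endpoint $\by$.
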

\begin{proof}
    Let $\bx,\by\in\reals^d$. Denote $\gamma(t):=(1-t)\cdot\bx+t\cdot\by$, and notice that
    \begin{align} \label{eq:Taylor appendix lemma}
        \tilde{f}\left(\by\right)-\tilde{f}\left(\bx\right)
        &=\tilde{f}\left(\gamma\left(1\right)\right)-\tilde{f}\left(\gamma\left(0\right)\right)
        =\int_{0}^{1}\left(\tilde{f}\circ\gamma\right)'\left(t\right)dt
        =\int_{0}^{1}\left\langle \nabla\tilde{f}\left(\gamma\left(t\right)\right),\gamma'\left(t\right)\right\rangle dt \nonumber\\
        &=\int_{0}^{1}\left\langle \nabla\tilde{f}\left(\gamma\left(t\right)\right),{\by-\bx}\right\rangle dt~.
    \end{align}
    Combining Cauchy-Schwarz with the fact that $\nabla\tilde{f}$ is $L$-Lipschitz, we get
    \begin{gather*} 
        \left\langle \nabla\tilde{f}\left(\bx\right)-\nabla\tilde{f}\left(\gamma\left(t\right)\right),\by-\bx\right\rangle 
        \leq\left\| \nabla\tilde{f}\left(\bx\right)-\nabla\tilde{f}\left(\gamma\left(t\right)\right)\right\| \cdot\left\|\by-\bx\right\| 
        \leq L\left\|\bx-\gamma\left(t\right)\right\|\cdot\left\|\by-\bx\right\| \\
        \implies\left\langle \nabla\tilde{f}\left(\gamma\left(t\right)\right),\by-\bx\right\rangle
        \geq\left\langle \nabla\tilde{f}\left(\bx\right),\by-\bx\right\rangle -L\left\| \gamma\left(t\right)-\bx\right\| \cdot\left\|\by-\bx\right\|~.
    \end{gather*}
    Plugging this into \eqref{eq:Taylor appendix lemma} gives
    \begin{align*}
        \tilde{f}\left(\by\right)-\tilde{f}\left(\bx\right)
        \geq&\int_{0}^{1}\left(\left\langle \nabla\tilde{f}\left(\bx\right),\by-\bx\right\rangle -L\left\| \gamma\left(t\right)-\bx\right\| \cdot\left\|\by-\bx\right\|\right)dt\\
        =&\left\langle \nabla\tilde{f}\left(\bx\right),\by-\bx\right\rangle -L\left\|\by-\bx\right\| \cdot\int_{0}^{1}\left\| \gamma\left(t\right)-\bx\right\| dt\\
        =&\left\langle \nabla\tilde{f}\left(\bx\right),\by-\bx\right\rangle -L\left\| \by-\bx\right\| \cdot\left[\frac{1}{2}\left\| \gamma\left(1\right)-\bx\right\| ^{2}-\frac{1}{2}\left\| \gamma\left(0\right)-\bx\right\|^{2}\right]\\
        =&\left\langle \nabla\tilde{f}\left(\bx\right),\by-\bx\right\rangle -\frac{L}{2}\left\| \by-\bx\right\|^{3}\\
        \implies&\left\langle \nabla\tilde{f}\left(\bx\right),\by-\bx\right\rangle
        \leq\tilde{f}\left(\by\right)-\tilde{f}\left(\bx\right)+\frac{L}{2}\left\|\by-\bx\right\|^{3}~.
    \end{align*}
    We assume $\|\nabla\tilde{f}(\bx)\|\neq0$ since otherwise the desired claim is trivial. In particular, if $\by=\bx+\frac{\nabla\tilde{f}\left(\bx\right)}{\left\|\nabla\tilde{f}\left(\bx\right)\right\|}$ then $\|\by-\bx\|=1$ and inequality above reveals
    \[
    \left\|\nabla\tilde{f}\left(\bx\right)\right\| \leq\tilde{f}\left(\by\right)-\tilde{f}\left(\bx\right)+\frac{L}{2}\leq f\left(\by\right)-f\left(\bx\right)+2\epsilon+\frac{L}{2}
    \leq \left\| \by-\bx\right\|+2\epsilon+\frac{L}{2}
    =1+2\epsilon+\frac{L}{2}~,
    \]
    where we used the fact that $\|\tilde{f}-f\|_\infty\leq\epsilon$, and that $f$ is 1-Lipschitz.
\end{proof}

\end{document}